\numberwithin{equation}{section}
 \theoremstyle{plain}
\newtheorem{thm}[equation]{Theorem}
\newtheorem{lem}[equation]{Lemma}
\newtheorem{prop}[equation]{Proposition}
\newtheorem{cor}[equation]{Corollary}
\newtheorem{hypothesis}[equation]{Hypothesis}
\def\co{{\mathcal O}}
\def\oqmm13{\co_q(M_{1,3})}
\def\oqm23{\co_q(M_{2,3})}
\newcommand{\mb}{\mathbb}
\title[]{Free subalgebras of division algebras over uncountable fields}
\author{Jason P.~Bell}
\thanks{The first-named author was supported by NSERC grant 31-611456.}
\keywords{Free algebras, division algebras, GK dimension, centralizers}
\subjclass[2000]{}
\address{Jason Bell\\
Department of Pure Mathematics\\
University of Waterloo\\
Waterloo, ON N2L 3G1\\
Canada}
\email{jpbell@uwaterloo.ca}
\author{D. Rogalski}
\thanks{The second-named author was supported by NSF Grant DMS-0900981.}
\subjclass[2010]{16K40, 16P90, 16S10, 16S85}
\address{D. Rogalski\\
Department of Mathematics\\
University of California, San Diego\\
La Jolla, CA 92093-0112\\
USA}
\email{drogalsk@math.ucsd.edu}
\begin{document}

\bibliographystyle{plain}

%%%%%%%%%%%%%%%%%%%%%%%%%%%%%%%%%%%%%%%%%%%%%%%%%%%%%%%%%%%%%%%%%%%%%%%%%%%%%%%%%%%%%%%%%%%%%%%%%%%%%%%%%%%%%%%%%%%%%%

\begin{abstract} We study the existence of free subalgebras in division algebras, and prove the following general result:  if $A$ is a noetherian domain which is countably generated over an uncountable algebraically closed field $k$ of characteristic $0$, then either the quotient division algebra of $A$ contains a free algebra on two generators, or it is left algebraic over every maximal subfield.  As an application, we prove that if $k$ is an uncountable algebraically closed field and $A$ is a finitely generated $k$-algebra that is a domain of GK-dimension strictly less than $3$, then either $A$ satisfies a polynomial identity, or the quotient division algebra of $A$ contains a free $k$-algebra on two generators.  \end{abstract}

%%%%%%%%%%%%%%%%%%%%%%%%%%%%%%%%%%%%%%%%%%%%%5%%%%%%%%%%%%%%%%%%%%%%%%%%%%%%%%%%%%%%%%%%%%%%%%%%%%%%%%%%%%%%%%%%%%%%%

\maketitle

\tableofcontents

%%%%%%%%%%%%%%%%%%% DRAFT INTRODUCTION  %%%%%%%%%%%%%%%%%%%%%%%%%

\section{Introduction}

Many authors have noted that division rings and their multiplicative groups often have free subobjects \cite{Ch, FGS, Licht, Lor, ML, ML15, ML2, ML3, RV, SG}. In this paper, we continue the study of the question of when division algebras contain a free subalgebra in two generators over their center.   Makar-Limanov  first gave evidence of this phenomenon by showing that if $A=k\{x,y\}/(xy-yx-1)$ is the Weyl algebra over a field $k$ of characteristic $0$, then its quotient division ring contains a free $k$-algebra on two generators \cite{ML}.   We call the \emph{free subalgebra conjecture} the statement that a division algebra $D$ must contain a free algebra on two generators over its center unless $D$ is locally PI, that is, all of its affine subalgebras are polynomial identity algebras.  This conjecture (in some form) was formulated independently by both Makar-Limanov and Stafford.    We refer the reader to \cite{BR} for a more detailed discussion of the conjecture and past work on the subject.

The authors showed in \cite{BR} that the free subalgebra conjecture holds for the quotient division rings of iterated Ore extensions of PI rings, if the base field $k$ is uncountable (and in most cases likely to arise in practice, over a countable base field $k$ as well.)  In this sequel paper, we develop further techniques for demonstrating the existence of a free subalgebra of a division ring, which are available only in the case of an uncountable base field.  The main advantage of an uncountable base field is that with this hypothesis, we can prove that a division algebra $D$ contains a free subalgebra on $2$ generators if and only if $D(t)$ does, where $t$ is a commutative indeterminate.  Using a criterion for existence of free subalgebras proved in \cite{BR}, in this paper we prove the following much stronger criterion:
\begin{thm}  {\upshape (Corollary~\ref{cor: xxx2}) }
Let $D$ be a division algebra over an uncountable field $k$.  Let $a \in D$ be nonzero and let $E := C(a; D)$ be the centralizer of $a$ in $D$.  If $\operatorname{char} k = p > 0$ then assume further that there is no $u \in D$ such that $aua^{-1} = u + 1$.  If $D$ is neither left nor right algebraic over $E$, then $D$ contains a free $k$-algebra on two generators.
\end{thm}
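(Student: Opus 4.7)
The plan is to combine two ingredients that are available precisely because $k$ is uncountable: first, the transfer principle developed earlier in the paper, namely that $D$ contains a free $k$-algebra on two generators if and only if the rational-function division ring $D(t)$ does, where $t$ is a central indeterminate; and second, the freeness criterion of \cite{BR}, which deduces the existence of a free subalgebra from a relatively mild ``generic independence'' hypothesis over a subring. So the problem reduces to producing a free subalgebra of $D(t)$, and for that I would look for a pair of elements constructed out of $a$, elements witnessing the non-algebraicity, and $t$.

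By hypothesis pick $b_\ell \in D$ witnessing left non-algebraicity of $D$ over $E$ (so that $\{b_\ell^n\}_{n \ge 0}$ is left $E$-linearly independent) and $b_r \in D$ witnessing right non-algebraicity. Conjugation by $a$ gives an inner automorphism $\sigma$ of $D$ whose fixed ring is precisely $E$. Inside $D(t)$ I would form a candidate pair $x=a$ and $y$ built from $b_\ell,\, b_r$ and $t$, the natural first guesses being $y = b_\ell + t$ or $y = 1 + t b_\ell$; the exact shape is dictated by whichever version of the \cite{BR} criterion one is invoking.

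The technical heart of the argument is then to verify that $x$ and $y$ generate a free subalgebra of $D(t)$. I would do this by a leading-term analysis in powers of $t$: a nontrivial word $x^{n_1} y^{m_1} x^{n_2} y^{m_2} \cdots$ has a $t$-expansion whose leading coefficients are polynomial expressions in conjugates $\sigma^j(b_\ell)$ and $\sigma^j(b_r)$. Left non-algebraicity of $b_\ell$ over $E$ is what forces sufficiently many of these leading terms to be left $E$-linearly independent, while right non-algebraicity plays the symmetric role needed to rule out relations coming from the other side; both are genuinely used. The characteristic-$p$ hypothesis excluding an element $u$ with $aua^{-1} = u + 1$ is exactly what rules out an Artin--Schreier-type relation of the form $\sigma^p(u) = u$, which would create a finite-order obstruction collapsing the leading-term filtration and destroying the independence.

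The main obstacle I expect is selecting the correct pair $(x,y)$ so that the \cite{BR} criterion applies without further side conditions, and then carrying out the $t$-adic leading-term bookkeeping that delivers both the left and the right independence simultaneously. In particular, isolating precisely why the degenerate relation $aua^{-1} = u+1$ must be excluded in positive characteristic, and seeing that no other degeneracy occurs, is the delicate point. Once the free pair is produced in $D(t)$, the conclusion for $D$ itself is immediate from the uncountable-$k$ transfer principle, yielding the desired free $k$-algebra on two generators in $D$.
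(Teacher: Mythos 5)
Your high-level reduction is the right one, and it matches the paper's strategy in outline: pass to $D(t)$ using the uncountable-field transfer principle (Proposition~\ref{prop-extension}), let $\sigma$ be conjugation by $a$ so that $E$ is the fixed ring and $D(x;\sigma)\cong D(t)$, and invoke the criterion of \cite{BR} (Theorem~\ref{main-criterion-thm}). But there is a genuine gap at what you yourself call the technical heart. The \cite{BR} criterion is not a ``mild generic independence'' hypothesis: it requires exhibiting $b\in D\setminus E$ for which the equation $\sigma(u)-u\in b+E$ has \emph{no} solution $u\in D$, and the entire difficulty of the statement you are proving is to deduce this (or freeness directly) from the mere assumption that $D$ is not left or right algebraic over $E$. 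In the paper this is Theorem~\ref{thm: delt}, whose proof is a several-page argument: assuming no free subalgebra, one gets a solution $u(t)-\sigma(u(t))=(1-bt)^{-1}+e(t)$ in $D(t)$, expands in the Laurent series ring $D((t))$ to obtain $u_n-\sigma(u_n)=b^n+e_n$ for all $n$, and then runs a double minimality argument with the $\sigma$-derivation $\delta=1-\sigma$, finally feeding the outcome into Lemmas~\ref{lem: sigma} and \ref{lem: not direct} to conclude $b$ is right algebraic over $E$, a contradiction. Your proposal replaces all of this with an unspecified ``$t$-adic leading-term analysis'' of an ad hoc candidate pair such as $x=a$, $y=b_\ell+t$ or $y=1+tb_\ell$; no argument is given that any such pair is free, and there is no reason to expect the bookkeeping to close up --- indeed the paper's free generators have the quite different shape $b(1-x)^{-1}$, $(1-x)^{-1}$ coming out of Theorem~\ref{main-criterion-thm}, and even then freeness is obtained only through the contradiction argument above, not by direct word-by-word verification.

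Two further points are only gestured at in your sketch but need actual proofs. First, the hypothesis supplies possibly different witnesses $b_\ell$ and $b_r$ for left and right non-algebraicity, whereas the criterion needs a \emph{single} element non-algebraic over $E$ on both sides; the paper resolves this by passing to $D(t)$ and taking $b(t)=u+tv$ (Lemma~\ref{lem: both sides}), together with the facts that $C(a;D(t))=E(t)$ (Lemma~\ref{dumb-lem}) and that the condition $\sigma(u)=u+1$ does not appear anew in $D(t)$ (Lemma~\ref{lem: weyl}). Saying ``$y$ is built from $b_\ell$, $b_r$ and $t$'' does not substitute for such a construction. Second, the role of the characteristic-$p$ exclusion of $aua^{-1}=u+1$ is not to prevent a collapse of your leading-term filtration; it is the exact hypothesis under which the proof of \cite[Theorem 2.3]{BR} yields its conclusion, and it must be transported to $D(t)$ via Lemma~\ref{lem: weyl}. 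As written, then, the proposal assembles the correct outer framework but omits the argument that actually carries the load.
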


The notion of algebraicity used in the theorem above is defined as follows:   If $D$ is a division ring and $E$ is a division subring (not necessarily central), then $D$ is \emph{left algebraic} over $E$ if every $x$ in $D$ satisfies a non-trivial polynomial equation $\sum_{i=0}^m c_i x^i = 0$ with $c_0,\ldots ,c_m\in E$.  \emph{Right algebraic} is defined analogously.  Thus another way of phrasing the result in characteristic $0$ is the following:  if $D$ does not contain a free subalgebra, then $D$ must be algebraic (on at least one side) over every centralizer.  Intuitively, having algebraic-like properties is a definite obstacle to containing a free subalgebra, and the theorem shows that in some sense this is the only obstacle.

When $D$ is the quotient division ring of a noetherian domain over an uncountable algebraically closed field, the criterion above can be tightened even further, as follows.
\begin{thm} {\upshape (Corollary~\ref{cor-mainthm2})}
Let $A$ be a countably generated noetherian domain over an uncountable, algebraically closed field $k$ of characteristic $0$.  If the quotient division ring $D := Q(A)$ of $A$ does not contain a free $k$-algebra on two generators, then $D$ is left algebraic over every maximal subfield.
\end{thm}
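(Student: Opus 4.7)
The plan is to prove the contrapositive: assuming $D$ is not left algebraic over some maximal subfield $F$, produce a free $k$-algebra on two generators in $D$, contradicting the hypothesis. The strategy is to reduce to Corollary~\ref{cor: xxx2} by finding an element $a \in D$ whose centralizer is exactly $F$.

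The key technical step is to construct $a \in F$ with $C(a;D) = F$. By maximality of $F$ we have $C(F;D) = F$, so $C(a;D) \supseteq F$ automatically when $a \in F$; the content is forcing the reverse inclusion, namely that no $x \in D \setminus F$ commutes with $a$. For each such $x$, the set $W_x := F \cap C(x;D)$ is a proper $k$-subspace of $F$ (otherwise $F \subseteq C(x;D)$ would give $x \in C(F;D) = F$), so the ``bad'' locus for $a$ is $\bigcup_{x \in D \setminus F} W_x$, a union of proper $k$-subspaces indexed by a set of cardinality at most $|D| \leq |k|$ (using countable generation of $A$ over uncountable $k$). I would produce a good $a$ by an inductive covering argument. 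First, choose $f_1, \dots, f_n \in F$ so that $\bigcap_{i=1}^n C(f_i;D) = F$: at each stage, if the current intersection strictly contains $F$, pick a witness $x_n$ in it outside $F$ and an $f_n \in F$ that does not commute with $x_n$. Then search for $a = f_1 + \alpha_2 f_2 + \cdots + \alpha_n f_n$ with $\alpha_i \in k$. For each fixed $x \in D \setminus F$, the equation $[a,x] = 0$ becomes a proper affine linear condition on $(\alpha_2, \dots, \alpha_n) \in k^{n-1}$, and uncountability of $k$ prevents at most $|k|$ such proper affine subvarieties from covering $k^{n-1}$ once $n$ is large enough, so a suitable $(\alpha_i)$ exists.

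With such $a$, Corollary~\ref{cor: xxx2} applies with $E = C(a;D) = F$, the characteristic-$p$ caveat being vacuous. The corollary delivers the forbidden free subalgebra provided $D$ is not right algebraic over $F$ either; the left direction fails by our standing assumption. The residual case in which $D$ is right algebraic but not left algebraic over $F$ I would handle by the symmetric argument applied to $D^{\mathrm{op}} = Q(A^{\mathrm{op}})$, which satisfies all the same hypotheses and contains a free $k$-subalgebra on two generators if and only if $D$ does. I expect the main obstacle to be the construction of $a$ with $C(a;D) = F$: the bad index set has cardinality as large as $|k|$, so the covering argument is tight and hinges on the inductive choice of the $f_i$ to guarantee that each excluded locus is a genuinely proper affine subvariety of the parameter space $k^{n-1}$ rather than all of it.
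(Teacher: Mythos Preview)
Your covering argument is the fatal gap. You claim that uncountability of $k$ prevents $|k|$ proper affine subspaces from covering $k^{n-1}$ ``once $n$ is large enough,'' but this is false for every $n$: the hyperplanes $\{x_1 = c\}_{c \in k}$ already cover. Uncountability helps only against \emph{countably} many proper subspaces, and since $|D| = |k|$ here the index set is exactly the wrong size. Relatedly, your inductive construction of $f_1,\dots,f_n$ with $\bigcap_i C(f_i;D)=F$ is not shown to terminate; for this you need that $F$ is finitely generated over $k$, which is true but requires the noetherian and countably-generated hypotheses via \cite[Corollary 1.3]{Bell3}---an input you never invoke. Finally, your $D^{\mathrm{op}}$ trick does not dispose of the residual case: if $D$ is right algebraic but not left algebraic over $F$, then $D^{\mathrm{op}}$ is left algebraic but not right algebraic over $F$, so Corollary~\ref{cor: xxx2} still fails to apply on either side.

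The paper avoids all of this. It first cites \cite{Bell3} to get that every maximal subfield $K$ of $Q(A)$ is finitely generated over $k$, and then applies Proposition~\ref{maxsub}, which does \emph{not} attempt to find a single $a$ with $C(a;D)=K$. Instead it inducts on the number of generators $a_1,\dots,a_d$ of $K/k$: the base case $K=k(a_1)$ gives $C(a_1;D)=K$ directly, and the inductive step passes to $D_1 := C(a_1;D)$ (after adjoining $t$), where $K(t)$ is still maximal but needs one fewer generator over the new base $k(t)(a_1)$. The left/right symmetry is handled by Proposition~\ref{prop: fix}(2), which shows that over a finitely generated subfield, left algebraicity and right algebraicity are equivalent; transitivity of algebraicity through the chain $K(t)\subseteq D_1(t)\subseteq D(t)$ is supplied by Lemma~\ref{lem: chain}. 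So the paper's route is genuinely different from yours, and the finite-generation input from \cite{Bell3} is what makes both the induction and the left/right symmetry go through.
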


The property of being algebraic over every maximal subfield seems rather similar to the property of being algebraic over the center, and in fact no examples are known of either phenomenon except for division rings which are locally PI.
%\ff{Is this correct?}

The question of the existence of finitely generated division algebras which are algebraic but not finite-dimensional over their centers is the famous Kurosh problem for division rings.  Thus the theorem above more or less settles the free subalgebra conjecture for $\mb{C}$-algebras, modulo the question of the potential existence of bizarre examples satisfying a Kurosh-type condition.  In practice, of course, an explicit finitely generated division ring is likely to be obviously either PI or else provably not algebraic over a maximal subfield.

In the remainder of the paper, we apply our criterion to the special case of domains of low Gelfand-Kirillov (GK)-dimension, and prove the following result.
\begin{thm} {\upshape (Corollary~\ref{cor-mainthm3})}
Let $k$ be an uncountable algebraically closed field, and let $A$ be a finitely generated $k$-algebra that is a domain of GK-dimension strictly less than $3$. If $A$ does not satisfy a polynomial identity, then the quotient division algebra $Q(A)$ of $A$ contains a free $k$-algebra on two generators.  Thus $Q(A)$ satisfies the free subalgebra conjecture.  \label{thm: GK2}
\end{thm}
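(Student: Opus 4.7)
The approach is to combine the dichotomy provided by Corollary~\ref{cor-mainthm2} with the classical structure theory of low GK-dimension algebras. The proof proceeds in three main stages.

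\emph{Stage 1: Reduction to $\gkdim A = 2$.} Bergman's gap theorem asserts that $\gkdim A \notin (1,2)$, while the Small--Stafford--Warfield theorem asserts that a finitely generated $k$-algebra of GK-dimension at most $1$ is PI. Combined with the hypothesis $\gkdim A < 3$ and the assumption that $A$ is not PI, these force $\gkdim A = 2$.

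\emph{Stage 2: Reduction to a noetherian, characteristic-zero setting.} To apply Corollary~\ref{cor-mainthm2}, one needs $A$ noetherian and $\operatorname{char} k = 0$. Since a finitely generated affine domain of GK-dimension $2$ need not be noetherian, one would first pass to a noetherian affine subalgebra $B \subseteq A$ of GK-dimension $2$ with $Q(B) = Q(A)$, arranged via standard choice-of-generators techniques for GK-dim $2$ domains. For $\operatorname{char} k = p > 0$, one instead applies Corollary~\ref{cor: xxx2} and rules out the exceptional relation $aua^{-1} = u + 1$ for a carefully chosen transcendental $a \in A$: this relation generates an Ore-extension-like subalgebra whose growth is incompatible with $\gkdim A = 2$.

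\emph{Stage 3: Producing the PI contradiction.} Assume for contradiction that $D := Q(A)$ contains no free $k$-algebra on two generators. By Corollary~\ref{cor-mainthm2}, $D$ is left algebraic over every maximal subfield. Pick $a \in A$ transcendental over $k$, which exists because $\gkdim A = 2 > 0$. In a GK-dim $2$ noetherian affine domain, centralizers of non-central elements are known to be commutative (in fact of GK-dim $\leq 1$), so the division-ring centralizer $E := C(a; D)$ is a subfield. Embed $E$ in a maximal subfield $L \subseteq D$. Then $D$ is left algebraic over $L$, while $L$ is commutative of transcendence degree at most $2$ over $k$. A dimensional argument combining the Lenagan--Zhang theory of GK-transcendence degree with the identification $L \supseteq k(a)$ then forces $D$ to be finite-dimensional over $L$, hence PI, contradicting $A$ not PI.

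The principal obstacle is the final dimensional argument in Stage 3: converting the abstract algebraicity statement over an arbitrary maximal subfield provided by Corollary~\ref{cor-mainthm2} into a concrete finite-dimensional conclusion using only the bound $\gkdim A = 2$. This requires both the commutativity of centralizers in GK-dim $2$ noetherian domains and a careful transcendence-degree comparison between $D$ and $L$. Stages 1 and 2 are largely standard reductions, with the non-noetherian-to-noetherian passage in Stage 2 being the most delicate technical point outside the main argument.
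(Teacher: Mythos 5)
There are genuine gaps at each of your three stages. In Stage 1, Bergman's gap theorem only excludes GK-dimension strictly between $1$ and $2$; whether a finitely generated domain can have GK-dimension strictly between $2$ and $3$ is an open problem (the paper says exactly this at the start of Section~\ref{GK2}), so you cannot reduce to $\gkdim A = 2$, and the GK-dimension-$2$ structure facts you invoke later (e.g.\ commutativity of centralizers in noetherian GK-$2$ domains) do not cover the stated hypothesis. In Stage 2, there is no ``standard choice-of-generators'' technique producing an affine \emph{noetherian} subalgebra $B \subseteq A$ with $Q(B) = Q(A)$: affine domains of GK-dimension $2$ need not be noetherian and nothing guarantees a noetherian subalgebra with the same quotient division ring; this is precisely why the paper does \emph{not} deduce Corollary~\ref{cor-mainthm3} from Corollary~\ref{cor-mainthm2}, but argues directly. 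Your characteristic-$p$ fix also fails: the element $u$ with $aua^{-1}=u+1$ lives in $Q(A)$, not in $A$, and in any case the subalgebra it generates is a homomorphic image of the Weyl algebra, which has GK-dimension $2$ --- perfectly compatible with $\gkdim A < 3$, so no growth contradiction arises. The paper instead shows via Tsen's theorem that this subalgebra must be the Weyl algebra itself, and then uses \cite[Theorem 1.3]{Bell} to conclude $Q(A)$ is finite-dimensional over its (PI, since $\operatorname{char} k = p$) quotient division ring, contradicting $A$ not PI.

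The most serious problem is the last step of Stage 3: passing from ``$D$ is left algebraic over a maximal subfield $L$'' to ``$D$ is finite-dimensional over $L$'' is exactly the Kurosh-type obstruction the paper explicitly states it cannot overcome; no dimensional or GK-transcendence-degree argument is known to force this, and moreover if $A$ is not noetherian the maximal subfield $L$ need not even be finitely generated over $k$, so Proposition~\ref{prop: fix} and the growth estimate of Proposition~\ref{prop: growth} (both of which require $L$ finitely generated) are unavailable. The paper's actual proof avoids maximal subfields altogether: it fixes $x \in A$ transcendental over $k$, uses the Bergman-style word combinatorics (Lemma~\ref{lem: words}, Proposition~\ref{prop: growth}, Corollary~\ref{prop: set}) together with $\gkdim A < 3$ to show $D = Q(A)$ is not algebraic over $k(x)$ on either side, shows that $E(t) = C(x; D(t))$ is algebraic over $k(x,t)$ by \cite[Theorem 1.2]{Bell}, and then combines Lemma~\ref{lem: chain}, Corollary~\ref{cor: xxx2}, Lemma~\ref{lem: weyl} and Proposition~\ref{lem: Ro} to produce the free subalgebra. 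Without a correct replacement for your noetherian reduction and for the algebraic-implies-finite step, the proposal does not prove the statement.
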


The result above may be seen as a generalization of Makar-Limanov's original result about free subalgebras in the quotient division ring of the complex Weyl algebra, since the Weyl algebra has GK-dimension 2.  We actually show that Theorem \ref{thm: GK2} holds for non-algebraically closed base fields $k$ as well, as long as $A$ is not an algebraic $k$-algebra.  Notice that if $A$ is a finitely generated algebraic $k$-algebra which is a domain, it is its own quotient division ring.  Currently, it is even unknown whether or not there exists a division algebra which is finitely generated as an algebra but infinite-dimensional over its center.  This is another problem in the same general family as the Kurosh problem, and such problems are among the most difficult in ring theory.  For this reason we do not expect the methods used in this paper will suffice to extend Theorem~\ref{thm: GK2} to the case of a non-algebraically closed or a countable base field.

\section*{Acknowledgments}

We thank George Bergman, James Zhang, Sue Sierra, Tom Lenagan, Agata Smoktunowicz, Toby Stafford, Zinovy Reichstein, Lance Small, and Jairo Gon\c calves  for valuable discussions and helpful comments.

\section{Free subalgebras of a division algebra and indeterminate extensions}
\label{sec-crit}

In this section, we prove a proposition that shows that over an uncountable field, replacing a division algebra $D$ by $D(t)$ does not affect the existence of free subalgebras.  This surprisingly useful freedom to add an indeterminate is the key to the proofs of the main results of this paper.  We work with right rings of fractions and so by Ore set we mean a right Ore set, and similarly by Ore domain we mean a right Ore domain.

Given a ring $R$ which is an Ore domain, we denote its quotient division ring by $Q(R)$. Given any Ore domain $R$ and commutative indeterminate $t$,  $R[t]$ is also an Ore domain and we use the notation $Q(R)(t)$ for its quotient division ring.   More generally, given a ring $R$ with automorphism $\sigma: R \to R$, recall that $R[x; \sigma]$ is the ring $\bigoplus_{n \geq 0} Rx^n$ with multiplication rule $ax^n bx^m = a\sigma^n(b) x^{n+m}$.  If $R$ is an Ore domain, then so is $R[x; \sigma]$ and in this case its Ore ring of fractions is denoted $Q(R)(x; \sigma)$.

We now state the main result of this section.   The use of the uncountable base field hypothesis is similar to other results of this type.  In particular, Reichstein used similar methods to show in \cite{R} that an algebra $R$ over an uncountable base field $k$ contains a free subalgebra on two generators if and only if any base field extension $R \otimes_k K$ does.
\begin{prop}
\label{prop-extension}
\label{lem: Ro}
Let $k$ be an uncountable field and let $D$ be a division algebra over $k$. Then the following are equivalent:
\begin{enumerate}
\item $D$ contains a free $k$-algebra on two generators;
\item $D(t)$ contains a free $k(t)$-algebra on two generators;
\item $D(t)$ contains a free $k$-algebra on two generators.
\end{enumerate}
\end{prop}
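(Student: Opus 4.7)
The plan is to prove the three implications $(1) \Rightarrow (2)$, $(2) \Rightarrow (3)$, and $(3) \Rightarrow (1)$.  The second is trivial, because a nontrivial $k$-algebra relation among two elements of $D(t)$ is a fortiori a nontrivial $k(t)$-algebra relation.  For the first, suppose $a,b \in D$ generate a free $k$-algebra and view them inside $D(t)\supseteq D$; any $k(t)$-relation can be cleared of denominators to an equation $\sum_w q_w(t)\,w(a,b) = 0$ with $q_w \in k[t]$.  Since $a,b \in D$ and $t$ is central, this equation lives in $D[t] = D \otimes_k k[t]$, and collecting like powers of $t$ expresses each coefficient as a $k$-linear combination of words in $a,b$ that must vanish by $k$-freeness.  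Hence every $q_w$ is zero.

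The substantive implication $(3) \Rightarrow (1)$ proceeds by specializing the central variable $t$ to a generic scalar $\alpha \in k$.  The technical vehicle is the formal Laurent series ring $D((t-\alpha))$ in the central variable $s := t - \alpha$: any nonzero element has the form $s^N(c + \cdots)$ with $c \in D$ nonzero, and the parenthesized factor is a unit in $D[[s]]$ by a geometric-series argument, so $D((s))$ is itself a division ring.  Expanding polynomials in $t$ around $\alpha$ gives a ring embedding $D[t] \hookrightarrow D((s))$, and since the target is a division ring, this extends uniquely to an embedding $D(t) \hookrightarrow D((s))$.  Consequently, each $r \in D(t)$ acquires a well-defined $s$-adic valuation $v_\alpha(r) \in \ZZ$, and whenever $v_\alpha(r) \geq 0$ the constant-term ring homomorphism $\ev_\alpha \colon D[[s]] \to D$ yields a value $r(\alpha) := \ev_\alpha(r) \in D$.

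Now let $u, v \in D(t)$ generate a free $k$-algebra.  Write $u = f_u g_u^{-1}$ and $v = f_v g_v^{-1}$ with $f_u,g_u,f_v,g_v \in D[t]$, and for each nonzero word $p \in k\langle x, y\rangle$ write $p(u,v) = f_p g_p^{-1}$ with $f_p, g_p \in D[t]$ nonzero.  The key finiteness observation is that for any nonzero $h \in D[t]$, the set $\{\alpha \in k : h(\alpha) = 0\}$ is finite, since expanding $h$ in a $k$-basis of $D$ gives a nonzero polynomial in $k[t]$ as the coefficient of some basis element, and such a polynomial has finitely many roots in $k$.  Applied to $g_u, g_v$ and to $f_p, g_p$ for each of the countably many $p$, this yields a countable union of finite exceptional sets of $\alpha$; since $k$ is uncountable, some $\alpha$ avoids all of them.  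For this $\alpha$, both $u$ and $v$ lie in $D[[s]]$, so the same is true of each $p(u,v)$, and the ring homomorphism $\ev_\alpha$ gives $p(u(\alpha), v(\alpha)) = \ev_\alpha(p(u,v)) = f_p(\alpha)\, g_p(\alpha)^{-1} \neq 0$.  Thus $u(\alpha), v(\alpha) \in D$ generate a free $k$-algebra, proving $(1)$.  The only real obstacle is setting up the Laurent embedding $D(t) \hookrightarrow D((t-\alpha))$ in the noncommutative setting, which works because $t-\alpha$ is central; the uncountability of $k$ is used in exactly one place, to find a single $\alpha$ that avoids a countable family of finite bad sets.
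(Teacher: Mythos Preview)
Your Laurent-series specialization is a clean alternative to the paper's machinery of countable Ore sets and partial localizations, and the implications $(1)\Rightarrow(2)$ and $(2)\Rightarrow(3)$ are handled correctly.  However, there is a genuine gap in your proof of $(3)\Rightarrow(1)$.

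You write ``for each nonzero word $p\in k\langle x,y\rangle$'' and then verify that $p(u(\alpha),v(\alpha))\neq 0$.  If ``word'' means monomial, this conclusion is vacuous: in a division ring, any monomial in nonzero elements is automatically nonzero, so you have proved nothing about freeness.  If instead ``word'' means an arbitrary nonzero element of $k\langle x,y\rangle$, then there are uncountably many such $p$ (since $k$ is uncountable), and your ``countable union of finite exceptional sets'' is no longer countable, so the uncountability of $k$ no longer guarantees a good $\alpha$.  Either way the argument does not close.

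What you actually need is that the countably many monomials $w_i(u(\alpha),v(\alpha))$ are $k$-linearly \emph{independent}, not merely nonzero.  The paper secures this with its Lemma~\ref{lift-lem}: if a finite set $\{f_i g_i^{-1}\}$ in $D(t)$ is $k(t)$-independent, then its specialization at $\alpha$ is $k$-dependent for only finitely many $\alpha$ (proved by a determinant/minor argument after passing to a common denominator and a finite-dimensional coefficient space).  Applying this to each finite initial segment of the word list and taking the countable union gives the desired good $\alpha$.  Your Laurent-series framework is perfectly compatible with this lemma and would replace the paper's Ore-set Lemma~\ref{ore-lem}; alternatively, you could invoke the Makar--Limanov--Malcolmson fact (used in the paper for $(2)\Leftrightarrow(3)$) that freeness over $k$ is equivalent to freeness over the prime subfield $k_0$, and then run your nonvanishing argument over the \emph{countable} set of nonzero $p\in k_0\langle x,y\rangle$.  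Either fix completes your approach.
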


Before giving the proof of the proposition, we start with a few subsidiary lemmas.  The following
straightforward result is unpublished work of the first-named author and John Farina; since the proof is short we include it here.
\begin{lem}
\label{ore-lem}
Let $k$ be a field and let $R$ be an Ore domain which is a countably generated $k$-algebra.  Then any countable subset $X$ of $R$ is contained in a countable Ore set $S$ in $R$.
\end{lem}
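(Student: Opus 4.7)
My plan is to construct $S$ as a countable nested union $\bigcup_n S_n$, at each finite stage keeping the set countable and recording the elements we will need to meet the Ore condition later. The main conceptual point is that we do not need to verify the right Ore condition against all $r \in R$ (and $R$ is typically uncountable as a $k$-vector space when $k$ is uncountable): it suffices to verify it against a countable $k$-algebra generating set of $R$, provided $S$ is also closed under multiplication and is right-directed in the sense that any two elements of $S$ share a common right multiple again lying in $S$.

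I would first prove this reduction as a key sublemma: if $S \subseteq R \setminus \{0\}$ is a submonoid containing $1$, is right-directed, and for every $s \in S$ and every $g$ in a fixed countable set of $k$-algebra generators of $R$ there exist $s' \in S$ and $r' \in R$ with $g s' = s r'$, then $S$ is a right Ore set of $R$. The argument is a standard closure manipulation: the Ore condition passes from generators to monomials $r_1 r_2$ by applying the single-element Ore property once to $r_1$ against $s$, obtaining $r_1 s_1 = s a$ with $s_1 \in S$, and then once to $r_2$ against $s_1$, obtaining $r_2 s_2 = s_1 b$ with $s_2 \in S$, so that $(r_1 r_2) s_2 = s (a b)$; the condition then passes from monomials to arbitrary $k$-linear combinations by using right-directedness (iterated over the finitely many summands) to pick a single denominator in $S$ that works simultaneously for the Ore denominators of each summand, with the $k$-linearity handled trivially since $k$ is central.

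Next I would carry out the construction itself. Fix countable $k$-algebra generators $g_1, g_2, \ldots$ of $R$, let $S_0$ be the multiplicative submonoid of $R$ generated by $X \cup \{1\}$ (countable since $X$ is), and recursively form $S_{n+1}$ from $S_n$ by three operations: (i) for each pair $(s,t) \in S_n \times S_n$, choose a common right multiple of $s$ and $t$ in $R$, which exists because $R$ is right Ore, and adjoin it; (ii) for each pair $(s, g_j)$ with $s \in S_n$ and $j \leq n$, choose a pair $(s', r') \in R \times R$ with $g_j s' = s r'$, which exists because $R$ is right Ore, and adjoin $s'$; (iii) close the resulting set under multiplication. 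Each step adjoins only countably many new elements to a countable set, so every $S_n$, and hence $S := \bigcup_n S_n$, is countable. By construction $S$ is a submonoid containing $1$, is right-directed, and satisfies the Ore condition against every $g_j$ at every $s \in S$; the sublemma then gives that $S$ is a right Ore set of $R$ containing $X$.

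The main work is in the reduction sublemma, which is the only place one really has to think; the inductive construction is then routine bookkeeping to maintain countability while ensuring all needed witnesses are eventually added. The hypothesis that $R$ is countably generated enters only in guaranteeing a countable generating set against which we can verify the Ore condition in countably many stages; without it, one would need a more delicate argument to avoid blowing up the cardinality of $S$.
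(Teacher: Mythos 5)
Your proof is correct, but it is organized around a different reduction than the paper's. The paper exploits the fact that a countably generated $k$-algebra has a \emph{countable $k$-basis} $\{r_1,r_2,\dots\}$: at each stage it adjoins, for every pair (basis element, current element of $S$), an Ore witness $r_i t = x u$, and then adjoins a common right multiple of each finite subset of the resulting set of denominators $t$; the Ore condition for an arbitrary $r=\sum\alpha_i r_i$ against $x\in S$ then follows directly by linearity, with the pre-adjoined common multiple serving as the single denominator. In particular the paper never needs your propagation step across products, nor multiplicative closure or right-directedness of $S$. You instead verify the Ore condition only against \emph{algebra generators} and prove a reduction sublemma (submonoid $+$ right-directedness $+$ Ore against generators $\Rightarrow$ Ore against all of $R$), in which multiplicative closure of $S$ handles monomials and right-directedness handles finite sums --- the latter playing exactly the role of the paper's common multiples of finite witness sets. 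Both are countable inductive saturation arguments and both are valid; yours yields an $S$ that is multiplicatively closed by construction (which the localization $R[t]S^{-1}$ in the application implicitly wants, and which the paper leaves tacit), at the cost of the extra sublemma, while the paper's basis trick is shorter. One small remark: your stated motivation that $R$ is ``typically uncountable as a $k$-vector space'' is beside the point --- a countably generated $k$-algebra always has a countable $k$-basis regardless of the cardinality of $k$, and that is precisely the device the paper uses.
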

\begin{proof}
Let $X := X_1$.  We construct inductively a sequence of countable subsets $X_1 \subseteq X_2 \subseteq \cdots$ with the property that if $x \in X_n$ and $r \in R$, then there is $y \in X_{n+1}$ and $t \in R$ such that $ry = xt$.  Then $S := \bigcup_{n \geq 1} X_n$ is the required countable Ore set.

Let $\{ r_1, r_2, \dots \}$ be a countable $k$-basis for $R$.  Suppose that $X_n$ has been constructed and write $X_n = \{ x_1, x_2, \dots \}$.  For each $i, j \in \mb{N}$ there are nonzero elements $t, u \in R$ such that $r_i t = x_j u$.  Let $T$ be the set of all $t$'s occurring as $i$ and $j$ vary.  Every finite subset of $T$ has a nonzero common right multiple $a$, and we let $X_{n+1}$ be a set containing one such common multiple for each finite subset of $T$.

Now given an arbitrary element of $R$, say $r = \sum \alpha_i r_i$ with $\alpha_i \in k$, and $x \in X_n$, we have $t_i \in T, u_i \in R$ such that $r_i t_i = x u_i$.  Then there is $a \in X_{n+1}$ and $b_i \in R$ such that $a = t_i b_i $ for each $i$, so that $r_i a = r_i t_i b_i = x u_i b_i$.  Then $ ra = (\sum \alpha_i r_i) a =  x (\sum \alpha_i u_i b_i)$ and so $X_{n+1}$ has the claimed property.
\end{proof}

\begin{lem}
\label{lift-lem}
Let $R$ be a $k$-algebra which is an Ore domain.   Let $f_1(t) g_1(t)^{-1}, \dots, f_n(t) g_n(t)^{-1}$, where $f_i(t), 0 \neq g_i(t) \in R[t]$, be a finite set of elements of $Q(R)(t)$.  Suppose there are infinitely many $\alpha \in k$ such that $g_i(\alpha) \neq 0$ for all $i$ and the set $\{ f_i(\alpha) g_i(\alpha)^{-1} \}$ is dependent over $k$.  Then the set $\{ f_i(t) g_i(t)^{-1} \}$ is dependent over $k(t)$.
\end{lem}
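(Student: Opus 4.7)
The plan is to clear denominators on the right using the Ore property so as to reduce the problem to one about polynomial vectors in $R[t]$, and then to apply the classical fact that the $k(t)$-rank of a matrix over $k[t]$ equals its rank at generic specializations.

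First, since $R[t]$ is a right Ore domain, I would iteratively invoke the right Ore condition on $g_1(t), \dots, g_n(t)$ to produce nonzero $u_i(t) \in R[t]$ and a common right denominator $g(t) := g_i(t) u_i(t) \in R[t] \setminus \{0\}$. Setting $F_i(t) := f_i(t) u_i(t)$ then yields $f_i(t) g_i(t)^{-1} = F_i(t) g(t)^{-1}$ in $Q(R)(t)$. Next, I would check that $g(t) \neq 0$ forces $g(\alpha) \neq 0$ for all but finitely many $\alpha \in k$: fix a $k$-basis $\{e_\lambda\}$ of $R$ and expand $g(t) = \sum_\lambda Q_\lambda(t) e_\lambda$ with $Q_\lambda(t) \in k[t]$; some $Q_\lambda$ is nonzero, and $g(\alpha) = 0$ forces $\alpha$ to be a root of it. Thus infinitely many $\alpha$ in the hypothesized set also satisfy $g(\alpha) \neq 0$, and for each such $\alpha$, substituting $f_i(\alpha) g_i(\alpha)^{-1} = F_i(\alpha) g(\alpha)^{-1}$ into the assumed $k$-dependence and right-multiplying by $g(\alpha)$ shows that $\{F_i(\alpha)\}$ is $k$-linearly dependent in $R$.

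Now expanding $F_i(t) = \sum_\lambda p_{i\lambda}(t) e_\lambda$ in the same basis, only finitely many indices $\lambda \in \Lambda$ appear, and I would form the $n \times |\Lambda|$ matrix $P(t) := (p_{i\lambda}(t))$ over $k[t]$. The $k$-dependence of $\{F_i(\alpha)\}$ is equivalent to the vanishing of every $n \times n$ minor of $P(\alpha)$; since these minors lie in $k[t]$ and vanish at infinitely many $\alpha \in k$, they vanish identically, so the rows of $P(t)$ are $k(t)$-dependent. This produces $c_1(t), \dots, c_n(t) \in k(t)$, not all zero, with $\sum_i c_i(t) p_{i\lambda}(t) = 0$ for every $\lambda$, hence $\sum_i c_i(t) F_i(t) = 0$ in the central localization $R[t] \otimes_{k[t]} k(t) \subseteq Q(R)(t)$. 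Since $k(t)$ is central in $Q(R)(t)$, right-multiplying by $g(t)^{-1}$ gives $\sum_i c_i(t) f_i(t) g_i(t)^{-1} = 0$, the desired dependence.

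The only real obstacle is handling the noncommutativity carefully in the common right denominator step and in translating $k$-dependence of the $F_i(\alpha)$'s to a rank condition on a matrix over $k[t]$; once that bookkeeping is done, the heart of the argument is the standard polynomial matrix rank fact and involves no further difficulty.
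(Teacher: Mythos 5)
Your proposal is correct and follows essentially the same route as the paper: clear denominators to a common right denominator via the Ore property, note that only finitely many extra $\alpha$ are lost, transfer the $k$-dependence to the numerators, expand in a $k$-basis of $R$ to get a matrix over $k[t]$, and use the fact that minors vanishing at infinitely many $\alpha$ vanish identically to conclude $k(t)$-dependence. The paper's proof differs only in inessential bookkeeping (it collects the coefficients in a finite-dimensional subspace $V$ rather than a full basis), so there is nothing to add.
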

\begin{proof}
For any $\alpha \in k$ there is an evaluation homomorphism $R[t] \to R$ defined by $t \mapsto \alpha$.  Note that a nonzero polynomial in $R[t]$ has only finitely many zeroes in $k$.  Choose a nonzero common right multiple $h$ for the $g_i$, say $h = g_i b_i$, so that $f_i g_i^{-1} = f_i b_i h^{-1}$.  Since $b_i(\alpha) = 0$ for finitely many $\alpha$, there are still infinitely many $\alpha \in k$ such that $h_i(\alpha) \neq 0$ for all $i$ and the elements $f_i(\alpha) b_i(\alpha) h(\alpha)^{-1}$ are $k$-dependent.  Then there are infinitely many $\alpha \in k$ such that the elements $f_i(\alpha) b_i(\alpha)$ are $k$-dependent.

Let $d_i(t) := f_i(t) b_i(t)$.  Choose a finite-dimensional $k$-space $V \subseteq R$ such that all coefficients of the polynomials $d_i$ are in $V$.  Let $\{v_1, \dots, v_m \}$ be a $k$-basis of $V$.  Write $d_i(t) = \sum_j a_{ij}(t) v_j$ with $a_{ij}(t) \in k[t]$.  Then $\{ d_i(\alpha) \}$ is $k$-dependent if and only if all $n \times n$-minors of the $n \times m$ matrix $( a_{ij}(\alpha) )$ are $0$; equivalently, if and only if all $n \times n$ minors of the matrix $(a_{ij}(t) )$ vanish at $\alpha$.  Since each minor vanishes at either finitely many $\alpha \in k$ or else is identically $0$, the assumption that the $d_i(\alpha)$ are $k$-dependent for infinitely many $\alpha$ implies that all $n \times n$ minors of $(a_{ij}(t) )$ are $0$.  But this means that the $d_i(t)$ are dependent over the field $k(t)$.  Then the elements $f_i b_i h^{-1} = f_i g_i^{-1}$ are linearly dependent over $k(t)$ as required.
\end{proof}

\begin{proof}[Proof of Proposition~\ref{prop-extension}] By a result of Makar-Limanov and Malcolmson \cite[Lemma 1]{ML3}, for any elements $x, y$ in a $k$-algebra, the subalgebra $k \langle x, y \rangle$ is free over $k$ if and only if $k_0 \langle x, y \rangle$ is free over $k_0$, where $k_0$ is the prime subfield of $k$.  It thus follows that if two elements generate a free subalgebra over some central subfield, then they generate a free subalgebra over every central subfield. Thus (2) and (3) are equivalent. Moreover, (1) obviously implies (3) and so we concentrate on proving that (2) implies (1).

Assume that $D(t)$ contains a free $k(t)$-algebra on two generators, generated by $x$ and $y$.   Let $\{ w_i \,| \, i \geq 0\}$ be the set of all words in $x$ and $y$ (so by assumption the $w_i$ are $k(t)$-independent) and write $w_i = f_i(t)g_i(t)^{-1}$ for some $0 \neq g_i, f_i \in D[t]$.  There are countably many coefficients in $D$ among the $f_i$ and $g_i$, so we can find a countably generated $k$-subalgebra $R \subseteq D$ such that $f_i, g_i \in R[t]$ for all $i$.  If $R$ is not an Ore domain, then it is standard that $R$ already contains a free $k$-algebra in 2 generators \cite[Proposition 4.13]{KL}, so $D$ does as well and we are done.  Thus we can assume that $R$ is an Ore domain, and so $R[t]$ is an Ore domain as well which is countably generated over $k$.

Now by Lemma~\ref{ore-lem}, we can choose a countable Ore set $S$ in $R[t]$ which contains all of the $g_i$.  Let $T := R[t]S^{-1}$.  Note that $T$ contains the $w_i$.  Given $\alpha \in k$, the evaluation map $R[t] \to D$ with $t \mapsto \alpha$ extends to a map $\phi_{\alpha}: T \to D$ as long as $s(\alpha) \neq 0$ for all $s \in S$.  There are only countably many $\alpha \in k$ which are a root of some $s \in S$ and hence the evaluation map $\phi_{\alpha}: T \to D$ is well-defined for all $\alpha$ in an an uncountable subset $X \subseteq k$.

Consider the first $n$ words $\{ w_1, \dots, w_n \}$ for some $n$.  Since the set $\{ w_i \,| \, 1 \leq i \leq n \}$ is $k(t)$-independent, by Lemma~\ref{lift-lem} there are at most finitely many $\alpha \in X$ such that the set $\{ f_i(\alpha) g_i(\alpha)^{-1} \,| \, 1 \leq i \leq n \}$ is $k$-dependent.  Taking the union over all $n$,  there are at most countably many $\alpha \in X$ such that $\{ \phi_{\alpha}(w_i) \,| \, i \geq 0 \}$ is $k$-dependent. Thus we can find some $\alpha \in X$ such that $\{ \phi_{\alpha}(w_i) \,| \, i \geq 0 \}$ is $k$-independent, in other words such that $\phi_{\alpha}(x), \phi_{\alpha}(y)$ generate a free $k$-subalgebra of $D$.
\end{proof}

The proposition above already simplifies the proof of an important special case of the main theorem from \cite{BR}, and we describe this next as a demonstration of the power of this method.
We first recall the main criterion for the existence of a free subalgebra from \cite{BR}.  In Section~\ref{sec: main}, we will combine this criterion with Proposition~\ref{prop-extension} to prove a much stronger criterion.
\begin{thm} {\upshape (cf. \cite[Theorem 2.3]{BR}) }
\label{main-criterion-thm}
\label{thm-BR}
Let $D$ be a division algebra over $k$ with $k$-automorphism $\sigma: D \to D$, and let
$E := \{u\in D \,| \, \sigma(u)=u\}$ denote the division subalgebra of $D$ fixed by $\sigma$.  If $k$ has characteristic $p>0$, assume further that there does not exist $u \in D$ such that $\sigma(u) = u + 1$.  If there is $b \in D\setminus E$ such that the equation
\begin{equation}
\sigma(u)-u \ \in \ b+E
\end{equation}
has no solutions with $u \in D$, then the $k$-algebra generated by $b(1-x)^{-1}$ and $(1-x)^{-1}$ is a free $k$-subalgebra of $D(x;\sigma)$.
\end{thm}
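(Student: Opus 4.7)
The plan is to embed the Ore extension $D(x;\sigma)$ into the skew Laurent power series ring $D((x;\sigma))$, in which $c:=(1-x)^{-1}$ has the convergent expansion $\sum_{i\geq 0}x^i$, and to prove linear independence of words at the level of these series. Writing $d:=bc$, every word in $c,d$ reduces, after substituting $d=bc$, to a unique normal form
\[
W(e_0,e_1,\ldots,e_n)\;:=\;c^{e_0}\,b\,c^{e_1}\,b\cdots b\,c^{e_n},\quad n\geq 0,\; e_0\geq 0,\; e_i\geq 1\ \text{for } i\geq 1,
\]
so it suffices to show that these $W$'s are $k$-linearly independent in $D((x;\sigma))$.

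Expanding $c^e=\sum_{k\geq 0}\binom{e+k-1}{k}x^k$ and using $xu=\sigma(u)x$, the coefficient of $x^N$ in $W(e_0,\ldots,e_n)$ is a finite $\mathbb{Z}$-linear combination of products $\sigma^{j_1}(b)\sigma^{j_2}(b)\cdots\sigma^{j_n}(b)$ with shift indices $j_i$ bounded by $N$. I would then proceed as follows: (i) suppose for contradiction that a nontrivial $k$-linear relation $\sum_W\lambda_W W=0$ exists, and choose one of minimal ``complexity'' (e.g.\ minimal maximal $b$-degree, then minimal total $\sum e_i$); (ii) reduce to a relation whose terms all have a common $b$-degree $n$ by left-multiplying by $(1-x)=c^{-1}$ and using the identity $xc^e=c^e-c^{e-1}$ to pass $x$'s across $c$-powers; (iii) within a single $b$-degree, inductively strip off leading $c$'s and compare $x$-coefficients to reduce the number of surviving terms; (iv) the surviving relation becomes an identity in $D$ asserting that some nonzero $\mathbb{Z}$-linear combination $\sum_j\alpha_j\sigma^j(b)$ lies in $E$, and a telescoping subtraction of $\sigma$-shifts converts this into an equation $\sigma(u)-u-b\in E$ for some $u\in D$, contradicting the hypothesis.

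The main obstacle is step~(iv): organizing the combinatorics so that the final extracted equation is a \emph{single} copy of $b$ modulo $E+(\sigma-\id)(D)$ rather than a more tangled $\sigma$-twisted sum. The characteristic-$p$ hypothesis --- that $\sigma(u)=u+1$ has no solution in $D$ --- enters precisely to guarantee that certain integer coefficients produced by the telescoping do not vanish in $k$ due to an Artin--Schreier-type degeneracy, which is the one case where this reduction could otherwise fail.
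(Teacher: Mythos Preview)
The paper does not give a self-contained proof of this theorem: it simply cites \cite[Theorem~2.3]{BR} and observes that the argument there establishes the conclusion unless $\sigma(u)=u+1$ has a solution in $D$, so replacing the original hypothesis of \cite{BR} by the nonexistence of such a $u$ in characteristic $p$ yields the stated version. Your proposal, by contrast, attempts to reprove the result from scratch.

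Your outline is broadly the correct strategy and is indeed the approach taken in \cite{BR}: embed into skew Laurent series, write words in $c=(1-x)^{-1}$ and $d=bc$ in the normal form $c^{e_0}bc^{e_1}\cdots bc^{e_n}$, assume a minimal $k$-linear relation, and peel it down to a contradiction with the hypothesis on $b$. However, what you have written is a plan, not a proof. Steps (ii)--(iv) are where essentially all of the work lies, and you have not carried them out; you yourself flag step~(iv) as ``the main obstacle'' without resolving it. In particular, your description of how the characteristic-$p$ hypothesis enters is impressionistic: in the actual argument one does not obtain a bare equation $\sigma(u)-u\in b+E$ directly, but rather after the reduction one is left with a relation of the form $\sum_j \alpha_j\sigma^j(b)\in E+(\sigma-1)D$ with integer coefficients $\alpha_j$, and one must telescope and divide by integers to isolate a single $b$. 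The Artin--Schreier obstruction arises because this last division can fail when $p\mid\sum\alpha_j$, and the condition that $\sigma(u)=u+1$ have no solution is exactly what rules out the degenerate branch.

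So: right idea, same route as the cited reference, but you have not supplied the combinatorics that makes it work. Either fill in steps (ii)--(iv) carefully, or do as the paper does and invoke \cite[Theorem~2.3]{BR} together with the observation about where the characteristic-$p$ case branches.
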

\begin{proof}
This is just a restatement of \cite[Theorem 2.3]{BR}, with a different hypothesis in case $k$ has characteristic $p$.  The proof of \cite[Theorem 2.3]{BR} shows that the conclusion of the theorem holds unless there is $u \in D$ such that $\sigma(u) = u + 1$.
\end{proof}

The following special case of \cite[Theorem 1.1]{BR} is now quite easy.
\begin{thm}
\label{thm: BR}
Let $K/k$ be a field extension, where $k$ is an uncountable field of characteristic $0$, and let $\sigma: K \to K$ be a $k$-automorphism.  Then $K(x; \sigma)$ contains a free $k$-subalgebra on 2 generators if and only if $K(x; \sigma)$ is not locally PI, if and only if there exists $a \in K$ such that $\{ \sigma^i(a) \,| \, i \in \mb{Z} \}$ is infinite.
\end{thm}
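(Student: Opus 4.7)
Plan: I would prove the three conditions equivalent by showing (1)$\Rightarrow$(2)$\Rightarrow$(3)$\Rightarrow$(1).

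The first implication is immediate, since a free $k$-algebra on two generators is a finitely generated subalgebra failing every polynomial identity. For (2)$\Rightarrow$(3), I would argue the contrapositive: assume every $a \in K$ has finite $\sigma$-orbit. Any finitely generated subalgebra $S \subseteq K(x;\sigma)$ is generated by finitely many elements of the form $f_i g_i^{-1}$ with $f_i, g_i \in K[x;\sigma]$, involving only finitely many coefficients from $K$; since each has finite orbit, there is an $N$ such that all of them lie in $E_N := K^{\sigma^N}$, whence $S \subseteq E_N(x;\sigma)$. Because $\sigma^N$ fixes $E_N$, the element $x^N$ is central in $E_N[x;\sigma]$, and $E_N[x;\sigma]$ is free of finite rank $N\cdot [E_N:E]$ over the central subring $E[x^N]$, where $E = K^\sigma$. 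Therefore $E_N(x;\sigma)$ is finite-dimensional over its center, hence PI, so $S$ is PI.

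The crucial implication is (3)$\Rightarrow$(1). Given $a \in K$ with infinite $\sigma$-orbit, Proposition~\ref{prop-extension} reduces the problem to producing a free $k(t)$-subalgebra on two generators inside $K(x;\sigma)(t) = K(t)(x;\sigma)$, where $\sigma$ is extended to $K(t)$ by fixing $t$. I would then apply Theorem~\ref{main-criterion-thm} over the base field $k(t)$ to $D = K(t)$ with automorphism $\sigma$, using the specific choice
\begin{equation*}
b := \frac{1}{t - a} \in K(t);
\end{equation*}
the characteristic-$p$ exclusion of the criterion is vacuous here since $\operatorname{char} k = 0$. Since $a$ has infinite orbit, $a$ must be transcendental over $E = K^\sigma$ (any algebraic relation over $E$ would bound the orbit by the degree of the minimal polynomial), so $b \notin E(t)$, and no element of $E(t)$ has a pole at any $c_i := \sigma^i(a)$, as the $c_i$ are distinct transcendentals over $E$.

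The main obstacle is verifying that $\sigma(u) - u = b + f$ has no solution with $u \in K(t)$ and $f \in E(t)$. My plan is a pole-counting argument. Let $Q_i$ denote the principal part of $u$ at $c_i$, viewed as a polynomial in $(t - c_i)^{-1}$ with coefficients in $K$. Because $\sigma$ fixes $t$ and acts on coefficients, it carries a pole of $u$ at $c_{i-1}$ to a pole of $\sigma(u)$ at $c_i$, and the principal part of $\sigma(u) - u$ at $c_i$ equals $\sigma(Q_{i-1}) - Q_i$ (where $\sigma$ applied to a principal part acts on its coefficients and shifts the pole index by one). Matching principal parts on both sides of $\sigma(u) - u = b + f$ at each $c_i$ gives the recursion $Q_i = \sigma(Q_{i-1})$ for $i \ne 0$ and $\sigma(Q_{-1}) - Q_0 = (t - c_0)^{-1}$. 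If $Q_0 \ne 0$, then $Q_i = \sigma^i(Q_0) \ne 0$ for all $i \geq 1$, producing infinitely many poles of $u$. If instead $Q_0 = 0$, then $\sigma(Q_{-1}) = (t - c_0)^{-1}$ forces $Q_{-1} = (t - c_{-1})^{-1}$, and inductively $Q_{-i} = (t - c_{-i})^{-1}$ for every $i \geq 0$, again giving infinitely many poles. Either way we contradict $u \in K(t)$, so the hypothesis of Theorem~\ref{main-criterion-thm} is satisfied; the free $k(t)$-subalgebra thus produced inside $K(t)(x;\sigma)$ descends via Proposition~\ref{prop-extension} to a free $k$-subalgebra of $K(x;\sigma)$.
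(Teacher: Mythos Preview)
Your argument is correct and follows essentially the same route as the paper for the hard implication (3)$\Rightarrow$(1): both pass to $K(t)(x;\sigma)$ via Proposition~\ref{prop-extension} and exploit the valuation at $t=a$ along the infinite $\sigma$-orbit. The paper packages your pole-counting as an appeal to the valuative criterion \cite[Lemma~2.4]{BR}, whereas you unwind that criterion by hand directly from Theorem~\ref{main-criterion-thm} with $b=(t-a)^{-1}$; these are the same idea at different levels of abstraction. You also supply the easy implications, including a self-contained proof of (2)$\Rightarrow$(3) (the paper simply refers to \cite{BR}); note that your claim $[E_N:E]<\infty$ is justified by Artin's lemma applied to $\langle \sigma|_{E_N}\rangle$, which you might state explicitly. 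One small slip: in the case $Q_0=0$ you write ``$Q_{-i}=(t-c_{-i})^{-1}$ for every $i\ge 0$,'' but this should read $i\ge 1$ since $Q_0=0$ by hypothesis; the conclusion (infinitely many poles) is unaffected.
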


\begin{proof} Looking at the proof of \cite[Theorem 1.1]{BR}, the only difficult implication in this theorem is to prove that if there exists $a \in K$ such that $\{ \sigma^i(a) \,| \, i \in \mb{Z} \}$ is infinite, then $K(x; \sigma)$ contains a free subalgebra on 2 generators.  By Proposition~\ref{prop-extension}, it is enough to find a free subalgebra on two generators inside $K(x; \sigma)(t) \cong K(t)(x; \sigma)$, where $\sigma$ is extended trivially to an automorphism of $K(t)$.  For this we use the valuative criterion \cite[Lemma 2.4]{BR}, which is an easy extension of Theorem~\ref{main-criterion-thm} above.  That criterion shows that it is enough to find a discrete valuation $\nu$ of $K(t)$ which is nontrivial and such that for any $b \in K(t)$, $\nu(\sigma^i(b)) = 0$ for all $i \gg 0$ and $i \ll 0$.  Taking the discrete valuation $\nu$ on $K(t)$ corresponding to the maximal ideal $(t-a) \subseteq K[t]$, because $a$ is on an infinite $\sigma$-orbit these properties of $\nu$ are immediate.
\end{proof}

\noindent Compared to the proof just given, the methods of \cite{BR} require a more complicated geometric argument to construct a suitable valuation of $K$ itself.  On the other hand, \cite{BR} also proves this theorem in characteristic $p$ and, in many important cases, over a countable base field.

The following result, which is along similar lines to Proposition~\ref{prop-extension}, will be useful later in applying Theorem~\ref{thm-BR} in characteristic $p$.
\begin{lem} Let $k$ be an uncountable field, and let $A$ be a countably generated $k$-algebra which is an Ore domain.  Let $D := Q(A)$.  Suppose we have a $k$-algebra automorphism $\sigma: D \to D$, which we extend to an automorphism $\sigma: D(t) \to D(t)$ with $\sigma(t) = t$.  Then $D(t)$ contains an element $u$ such that $\sigma(u) = u + 1$ if and only if $D$ contains such an element.
\label{lem: weyl}
\end{lem}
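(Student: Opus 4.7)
The ``if'' direction is immediate, so the content lies in showing that a solution $u \in D(t)$ to $\sigma(u) = u+1$ forces the existence of a solution in $D$. The plan is to specialize $t$ to a suitably chosen scalar $\alpha \in k$, thereby reducing the given equation to one inside $D$. To this end, we would first write $u = f(t) g(t)^{-1}$ with $f, g \in D[t]$ and $g \neq 0$, which is possible because $D(t) = Q(D[t])$.

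Next, we would choose $\alpha \in k$ with $g(\alpha) \neq 0$. Such $\alpha$ exist in abundance: expanding the coefficients of $g$ in any $k$-basis of $D$, the condition $g(\alpha) = 0$ in $D$ forces each of finitely many not-all-zero polynomials in $k[x]$ to vanish at $\alpha$, so only finitely many $\alpha \in k$ are excluded. Because $\alpha$ lies in $k$ and is therefore central in $D[t]$, and because $D[t]$ is a left and right principal ideal domain, the element $t - \alpha$ generates a two-sided maximal ideal with quotient $D$, the complement of this ideal is an Ore set, and the evaluation map $t \mapsto \alpha$ extends to a ring homomorphism $\phi_\alpha \colon D[t]_{(t-\alpha)} \to D$. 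We would then set $v := \phi_\alpha(u) = f(\alpha) g(\alpha)^{-1} \in D$.

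Finally, because $\sigma$ is a $k$-algebra automorphism with $\sigma(t) = t$, it fixes the central element $t - \alpha$, preserves the localization $D[t]_{(t-\alpha)}$, and commutes with $\phi_\alpha$. Applying $\phi_\alpha$ to the identity $\sigma(u) = u + 1$ would then give $\sigma(v) = \phi_\alpha(\sigma(u)) = \phi_\alpha(u+1) = v + 1$, completing the proof.

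The main technical obstacle is the well-definedness and $\sigma$-equivariance of $\phi_\alpha$, but both rest only on the centrality of $t - \alpha$ in $D[t]$. Notably, this approach uses only that $k$ is infinite and does not invoke the countable generation of $A$ or Lemma~\ref{ore-lem}. An alternative in the spirit of Proposition~\ref{prop-extension} would instead use Lemma~\ref{ore-lem} to produce a countable Ore set in $A[t]$ containing a suitable denominator of $u$ and then pick $\alpha$ avoiding the countably many roots of elements of this Ore set; but since we are specializing only a single rational expression here, that extra machinery seems inessential.
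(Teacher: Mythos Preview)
Your argument is correct and takes a cleaner route than the paper's. The paper writes $v = f(t)g(t)^{-1}$ with $f,g \in A[t]$, invokes Lemma~\ref{ore-lem} to enlarge $\{g\}$ to a countable Ore set $S \subseteq A[t]$, and then uses the uncountability of $k$ to find $\alpha$ at which no element of $S$ vanishes, so that evaluation extends to $A[t]S^{-1} \to D$; it then checks that $\phi\sigma(w)=\sigma\phi(w)$ whenever both $w$ and $\sigma(w)$ lie in $A[t]S^{-1}$, which applies since $\sigma(v)=v+1$. You instead localize $D[t]$ at the two-sided ideal generated by the central element $t-\alpha$, a localization that is automatically $\sigma$-stable, and evaluate there. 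Your version needs only that $k$ is infinite and makes no use of the countable generation of $A$; the paper's detour through Lemma~\ref{ore-lem} is chosen for parallelism with the proof of Proposition~\ref{prop-extension}, where countably many denominators must be controlled simultaneously and uncountability is genuinely required, but for a single rational expression your direct approach is the more economical one. One small remark: the Ore property of the complement of $(t-\alpha)$ in $D[t]$ is not a formal consequence of $D[t]$ being a principal ideal domain (complements of primes in noncommutative noetherian rings need not be Ore), but it does follow easily from the centrality of $t-\alpha$, since every $p\in D[t]$ factors as $(t-\alpha)^m\tilde p$ with $\tilde p(\alpha)\neq 0$ and one can cancel the common power of $t-\alpha$ from any Ore relation in $D[t]$.
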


\begin{proof} Suppose that $v = f(t) g(t)^{-1} \in D(t)$ satisfies $\sigma(v) = v + 1$, where $f(t), 0 \neq g(t) \in A[t]$.  By Lemma \ref{ore-lem}, there exists a countable Ore set $S$ of $A[t]$ that contains $g(t)$.  Then there are uncountably many $\alpha\in k$ for which $h(\alpha) \neq 0$ for all $h \in S$ and hence the evaluation homomorphism from $A[t]$ to $A$ given by $t \mapsto \alpha$ extends to a homomorphism $\phi: A[t]S^{-1} \to D$.   Note also that for any element $w \in A[t]S^{-1}$ such that $\sigma(w) \in A[t]S^{-1}$, $\phi \sigma(w) = \sigma \phi(w)$.  Thus choosing any $\alpha$ for which $\phi$ is defined, we see that $u := \phi(v)$ satisfies $\sigma(u) = u + 1$.  The other direction is trivial.
\end{proof}

We close this section with an easy observation about centralizers in indeterminate extensions.

\begin{lem}
\label{dumb-lem}
Let $D$ be a division ring.  Then the following assertions hold.
\begin{enumerate}
\item[(1)] Let $\{ \sigma_i \,| \, i \in S \}$ be a collection of automorphisms of $D$.
If we define
\[
E:=\{a\in D \,| \, \sigma_i(a)=a\ \text{for all}\ i \in S \},
\]
then
\[
E(t)=\{a(t)\in D(t) \,| \, \sigma_i(a(t))=a(t)\ \text{for all}\ i \in S \},
\]
where we extend each $\sigma_i$ to $D(t)$ by declaring that
$\sigma_i(t)=t$.
\item[(2)] If $X \subseteq D$ is a subset and $Z := C(X; D)$ is its centralizer in $D$, then $C(X;
D(t)) = Z(t)$.  In particular, if $K$ is a maximal subfield of $D$ then $K(t)$ is a maximal subfield of $D(t)$.
\end{enumerate}
\end{lem}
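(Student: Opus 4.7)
The plan is to exploit the fact that $t$ is a central indeterminate in $D(t)$, so every element $a(t) \in D(t)$ admits a normal form
\[
a(t) \;=\; \Bigl(\sum_{j=0}^{n} c_j\, t^j \Bigr)\, g(t)^{-1},
\]
with $c_j \in D$ and $0 \neq g(t) \in k[t]$; this is because $k[t]\setminus\{0\}$ is a central Ore set in $D[t]$, so $D(t) \cong D \otimes_k k(t)$. I will fix this normal form and then read off both claims by comparing coefficients.

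For part (1), the inclusion $E(t) \subseteq \{a(t) : \sigma_i(a(t)) = a(t)\ \forall i\}$ is immediate since each $\sigma_i$ fixes $E$ and also $t$. Conversely, assume $a(t) = \bigl(\sum_j c_j t^j\bigr)g(t)^{-1}$ is fixed by all $\sigma_i$. Because $\sigma_i$ fixes $g(t) \in k[t]$ pointwise, applying $\sigma_i$ gives $\sigma_i(a(t)) = \bigl(\sum_j \sigma_i(c_j) t^j\bigr)g(t)^{-1}$, and equality with $a(t)$ forces $\sum_j (\sigma_i(c_j) - c_j) t^j = 0$, so $\sigma_i(c_j) = c_j$ for every $i$ and every $j$. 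Thus each $c_j$ lies in $E$, which places $a(t)$ in $E(t)$.

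For part (2), the centralizer equality $C(X; D(t)) = Z(t)$ uses exactly the same technique: for $x \in X$ and $a(t) = \bigl(\sum_j c_j t^j\bigr)g(t)^{-1}$, the relation $x a(t) = a(t) x$ translates (after clearing the central denominator $g(t)$ and equating coefficients of $t^j$) to $x c_j = c_j x$ for all $j$; intersecting over all $x \in X$ gives $c_j \in Z$, hence $a(t) \in Z(t)$, and the reverse inclusion is trivial. For the ``in particular'' statement, recall the standard fact that a subfield $K \subseteq D$ is a maximal subfield if and only if $C(K; D) = K$. Note $K(t)$ is a commutative subfield of $D(t)$, and since $t$ is central in $D(t)$, centralizing $K$ is the same as centralizing $K(t)$; then by what was just proved, $C(K(t); D(t)) = C(K; D(t)) = K(t)$, so $K(t)$ is a maximal subfield of $D(t)$.

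There is no real obstacle here; the only thing to be careful about is writing elements of $D(t)$ with a scalar denominator, so that applying $\sigma_i$ (or commuting with $x$) truly acts coefficient-wise and allows the linear-independence argument on powers of $t$.
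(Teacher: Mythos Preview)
Your argument rests on the claim that every $a(t)\in D(t)$ can be written as $\bigl(\sum_j c_j t^j\bigr)g(t)^{-1}$ with $g(t)\in k[t]$, equivalently that $D(t)\cong D\otimes_k k(t)$. This is false in general. The set $k[t]\setminus\{0\}$ is indeed a central Ore set in $D[t]$, but the resulting localization $D\otimes_k k(t)$ is typically a \emph{proper} subring of the full quotient division ring $D(t)=Q(D[t])$. Concretely, pick any $a\in D$ transcendental over the center $k$ (such elements certainly exist in the division rings the paper cares about). If $(t-a)^{-1}=p(t)q(t)^{-1}$ with $q(t)=\sum q_i t^i\in k[t]$, then expanding $(t-a)^{-1}=-\sum_{j\ge 0}a^{-j-1}t^j$ in $D((t))$ and multiplying by $q(t)$ shows that the coefficient of $t^n$ in $p(t)$, for $n\ge\deg q$, equals $-a^{-n-1}q(a)$; for $p(t)$ to be a polynomial we would need $q(a)=0$, contradicting transcendence of $a$. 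So $(t-a)^{-1}\notin D\otimes_k k(t)$, and your coefficient-comparison argument never gets off the ground for such elements.

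The paper's proof of (1) avoids this by working entirely inside $D[t]$: given a $\sigma_i$-fixed $a(t)\in D(t)$, one looks at the right ideal $J=\{f(t)\in D[t]:a(t)f(t)\in D[t]\}$, which is $\sigma_i$-stable, and takes a monic $g(t)\in J$ of minimal degree. Then $g(t)-\sigma_i(g(t))\in J$ has smaller degree, hence vanishes, forcing $g(t)\in E[t]$; and $a(t)g(t)\in D[t]$ is $\sigma_i$-fixed, hence in $E[t]$, so $a(t)\in E(t)$. No scalar denominator is needed. Your reduction of (2) to (1) via inner automorphisms $\sigma_x(d)=xdx^{-1}$ is fine and is exactly what the paper does; the gap is solely in the normal-form step for (1).
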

\begin{proof} We first prove (1).  It is clear that the elements of $E(t)$ are $\sigma_i$-fixed for all $i$.  Thus it is sufficient to show that if $a(t)\in D(t)$ is $\sigma_i$-fixed for all $i$ then $a(t)\in E(t)$.
Also, it is straightforward that the elements of $D[t]$ that are $\sigma_i$-fixed for all $i$ are precisely the elements of $E[t]$.    Let $a(t) \in D(t)$ be $\sigma_i$-fixed for all $i$ and let $$J:=\{f(t) \in D[t] \,| \, a(t)f(t)\ \text{is in}\ D[t]\}.$$  Then $J$ is a nonzero right ideal which is $\sigma_i$-stable for all $i$.  Let $g(t)\in D[t]$ be a nonzero polynomial in $J$ of minimal degree.  Without loss of generality, we may assume that $g(t)$ is monic.  Then for each $i$, $g(t)-\sigma_i(g(t))$ is an element of $J$ which has strictly smaller degree than $g(t)$ and hence must be zero.  It follows that $g(t)\in E[t]$.  But now $f(t):=a(t)g(t)\in D[t]$ is also $\sigma_i$-fixed for all $i$ and hence must be an element of $E[t]$. It follows that $a(t)=f(t)g(t)^{-1}\in E(t)$, establishing (1).

To prove (2), observe that if we take $\sigma_x$ to be the automorphism of $D$ given by $d\mapsto xdx^{-1}$, then $Z=C(X;D)$ is precisely the set of elements of $D$ that are $\sigma_x$-fixed for all $x \in X$.  Thus (i) gives that $C(X;D(t)) = Z(t)$.  The final claim follows since if $K$ is maximal subfield of $D$, then $C(K; D) = K$ and so $K(t) = C(K; D(t))$; thus $K(t) = C(K(t); D(t))$.
\end{proof}

\section{Lemmas on algebraicity}
If $R$ is a ring and $E$ is a subring which is a division algebra, we say that $b \in R$ is \emph{left algebraic} over $E$ if $\sum_{i \geq 0} Eb^i$ is not direct.  Right algebraic is defined similarly.  We say that $R$ is left algebraic over $E$ if every element of $R$ is algebraic over $E$, and similarly on the right.  Of course this notion is not nearly so well-behaved as in the commutative case, since if $b$ is left algebraic over $E$ then $\sum_{i \geq 0} Eb^i$ is merely a finite-dimensional left $E$-vector space and not a subring of $R$ in general.  In this section, we collect some basic results about algebraic elements which we will need in later sections.

Suppose that $E$ is a division subring of $D$.  It is not obvious in general that if $D$ is left algebraic over $E$, it must also be right algebraic over $E$.  However, in Proposition~\ref{prop: fix} below we show this is true in the special case that $E$ is a finitely generated field extension of a central base field $k$.  The proof depends on the following two lemmas.

\begin{lem}

\label{lem: bimod}
Let $L$ be a finitely generated field extension of $k$.  Suppose that $V$ is a $k$-central $(L, L)$-bimodule. Then $\dim_L ({}_L V) < \infty$ if and only if $\dim_L (V_L) < \infty$.
\end{lem}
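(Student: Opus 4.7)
The plan is to exploit the fact that $V$ being $k$-central makes it a module over the commutative ring $R = L \otimes_k L$, and to analyze the finite-dimensional Artinian quotient of $R$ that acts faithfully on $V$. By symmetry, only the implication $\dim_L({}_L V) < \infty \Rightarrow \dim_L(V_L) < \infty$ needs proof; assume $\dim_L({}_L V) = n$. Since left and right actions on any bimodule commute, right multiplication by each $\ell \in L$ is a \emph{left} $L$-module endomorphism of $V$, giving a $k$-algebra homomorphism $\rho \colon L \to \mathrm{End}_L({}_L V) \cong M_n(L)$. Combined with the scalar inclusion $\iota_1 \colon L \hookrightarrow M_n(L)$, this produces a $k$-algebra homomorphism $\phi \colon L \otimes_k L \to M_n(L)$, $a\otimes b \mapsto a\cdot\rho(b)$, whose image $K$ is a commutative $L$-subalgebra of $M_n(L)$ of dimension at most $n^2$. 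In particular $K$ is a finite-dimensional commutative Artinian $L$-algebra that contains two embedded copies of $L$, namely $\iota_1(L)$ (scalar matrices) and $\iota_2(L) := \rho(L)$.

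The crux is to show that $K$ is also finite-dimensional over $\iota_2(L)$. Decompose $K/\mathrm{Nil}(K) \cong F_1\times\cdots\times F_m$ as a finite product of fields (valid since $K$ is Artinian), so that each $F_j$ is a finite extension of $\iota_1(L)\cong L$. The hypothesis that $L/k$ is a \emph{finitely generated} field extension now enters decisively: each $F_j$ is then finitely generated as a field over $k$, hence over $\iota_2(L)$; and since $F_j/\iota_1(L)$ is finite, $\mathrm{trdeg}_k F_j = \mathrm{trdeg}_k L = \mathrm{trdeg}_k \iota_2(L)$, so $F_j/\iota_2(L)$ is algebraic, and being finitely generated as a field it is finite. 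Thus $K/\mathrm{Nil}(K)$ is finite over $\iota_2(L)$. Since $\mathrm{Nil}(K)$ is nilpotent and each quotient $\mathrm{Nil}(K)^i/\mathrm{Nil}(K)^{i+1}$ is a finitely generated $K/\mathrm{Nil}(K)$-module, a short filtration argument then upgrades this to $\dim_{\iota_2(L)} K < \infty$.

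To conclude, $V$ is finitely generated as a left $L$-module, hence as a $K$-module (since $\iota_1(L) \subseteq K$), hence as a $\iota_2(L)$-module; but $\iota_2(L)$ is a field, so $V$ is finite-dimensional over $\iota_2(L)$, i.e., $\dim_L(V_L) < \infty$. The main obstacle is the transcendence-degree step, which is exactly the place where finite generation of $L/k$ is used in an essential way; without it, two abstractly isomorphic copies of $L$ could sit inside a field $F_j$ in a way that makes the extension finite on one side and infinite on the other, and the lemma would fail.
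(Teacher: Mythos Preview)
Your proof is correct, but it takes a genuinely different route from the paper's argument.  Both proofs begin identically: fix a left $L$-basis of $V$ and let $\rho:L\to M_n(L)$ encode the right action, so that the two copies of $L$ inside $M_n(L)$ are $\iota_1(L)$ (scalars) and $\iota_2(L)=\rho(L)$.  From here the paper works with all of $M_n(L)$: it picks finitely generated subalgebras $C\subseteq\iota_2(L)$ and $B\subseteq M_n(L)$ with the right quotient rings, observes that both have GK-dimension equal to $\operatorname{trdeg}_k L$, and then argues by a growth estimate that $BL'$ (and hence $M_n(L)$) is finite-dimensional over $L'=\iota_2(L)$ on each side.  You instead restrict attention to the \emph{commutative} subalgebra $K=\iota_1(L)\cdot\iota_2(L)$, which is already finite over $\iota_1(L)$ and hence Artinian; then the structure theory of commutative Artinian rings reduces the problem to the purely field-theoretic fact that a finitely generated field extension of $k$ of the same transcendence degree as a subfield must be finite over that subfield.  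Your approach is more elementary in that it avoids GK-dimension entirely and isolates the role of finite generation of $L/k$ cleanly at the transcendence-degree step; the paper's approach, on the other hand, proves the stronger statement that the whole of $M_n(L)$, not just $K$, is finite over $\iota_2(L)$.
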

\begin{proof}
By symmetry, it is enough to show that if  ${\rm dim}_L(_{L}V ) <\infty$ then ${\rm dim}_L(V_L) < \infty$.  Suppose that ${\rm dim}_L(_{L}V ) = n$, and fix a left $L$-basis for $V$. Right multiplication by
$x \in L$ is a left $L$-linear map, and so with respect to the fixed basis it corresponds to a
matrix in $M_n(L)$ (acting on $V \cong L^n$ on the left). This determines a $k$-algebra homomorphism
$\phi : L\to M_n(L)$, and this homomorphism completely determines the right $L$-structure on $V$.

Let $L' := \phi(L) \subseteq M_n(L)$. We claim that $M_n(L)$ is a finite-dimensional $L'$-module on both sides.  To see this, let $d$ denote the transcendence degree of $L$ as an extension of $k$.  Observe that $L'\cong L$ as a $k$-algebra and hence $L'$ also has transcendence degree $d$ over $k$.  Let $C$ be a finitely generated $k$-algebra that generates $L'$ as a field extension of $k$ and let $W$ be a finite-dimensional $k$-vector subspace of $C$ that contains $1$ and generates $C$ as a $k$-algebra.  Similarly, we let $B$ be a finitely generated subalgebra of $M_n(L)$ that contains $C$ and whose artinian ring of quotients is equal to $M_n(L)$.  Also, we let $U$ be a finite-dimensional $k$-vector subspace of $B$ that contains $W$ and which generates $B$ as a $k$-algebra.  
Note that $C$ has GK-dimension $d$, since it is a finitely generated commutative $k$-algebra whose field of fractions has transcendence degree $d$ over $k$.  Then $B$ also has GK-dimension $d$.  This can be seen by noting that $B$ contains $C$, and thus the GK-dimension of $B$ is at least $d$.  On the other hand, $B\subseteq M_n(L)$ and $M_n(L)$ is a finite $L$-module (regarding $L$ as scalar matrices).  Hence the GK-dimension of $B$ is at most the GK-dimension of $L$ as a $k$-algebra, which is the transcendence degree of $L$ over $k$.   

We now let $U_m=U^mL'$ for $m\ge 1$.  We claim that there is some $N$ such that $U_i=U_N$ for all $i\ge N$.  Suppose that this is not the case.   Then we cannot have $U_i = U_{i+1}$ for any $i$, as this would imply $U_i = U_N$ for all $i \ge N$.  
Thus there exist elements $u_i\in U^i$ such that the sum $$\sum u_i L'$$ is direct.  This means that 
$$\sum_{i=1}^m u_i W^m$$ has dimension at least $m \, {\rm dim}_k(W^m)$.  Since 
$ \sum_{i=1}^m u_i W^m \subseteq U^{2m}$, we see that ${\rm GKdim}(B)\ge 1+{\rm GKdim}(C)$, which gives a contradiction, since we have shown that both algebras have GK-dimension $d$.
This proves the claim.  Thus $BL' = \bigcup_{i \geq 0} U^i L'$ is finite-dimensional as a right $L'$ vector space.   Hence there exist elements $w_1,\ldots ,w_r\in B$ such that $BL' = w_1 L'+\cdots + w_rL'$, where the sum on the right-hand side is direct.    We now claim that
$M_n(L)$ is at most $r$-dimensional as a right $L'$-vector space.  To see this claim, note that if 
$x_1,\ldots ,x_{r+1}\in M_n(L)$ are nonzero elements such that $x_1L'+\cdots + x_{r+1}L'$ is direct, then since $M_n(L)$ is the artinian ring of quotients of $B$, there is some regular element $b\in B$ such that $bx_i \in B$ for $i=1,\ldots r+1$ and hence $$\sum_{i=1}^{r+1} (bx_i) L'$$ is direct.  This contradicts the fact that $BL'$ has dimension $r$ as a right $L'$-vector space.  A similar argument shows that $M_n(L)$ is finite-dimensional as a left $L'$-vector space.

Now $V$ is certainly a finitely generated (in fact cyclic) left $M_n(L)$-module. Since $L$ acts on
the right via restricting the left $M_n(L)$-action to $L'$, and $M_n(L)$ is module-finite over $L'$, we
see that $V$ is also a finitely generated right $L$-module, as required.
\end{proof}

\noindent The proof above was inspired by the extensive study of bimodules over fields in \cite{NP}.  We note that the lemma above is not true in general when $L$ is an infinitely generated field extension of $k$; see \cite[Example 2.2]{NP}.

\begin{lem} Let $k$ be a field and let $D$ be a division $k$-algebra.  Suppose that $D$ is left algebraic over a subfield $L$ that contains $k$.   If $V$ is a finite-dimensional left $L$-vector subspace of $D$, and $x\in D$, then $$\sum_{i\ge 0} Vx^i$$ is finite-dimensional as a left $L$-vector space.
\label{lem: V}
\end{lem}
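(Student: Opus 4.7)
The plan is to reduce to the case $V = Lv$ for a single nonzero $v \in D$, and then to exploit the algebraicity of $x$ relative to the conjugated subfield $v^{-1}Lv$. Writing $V = L v_1 + \cdots + L v_d$ for an $L$-basis, one has $\sum_{i \ge 0} V x^i = \sum_{j=1}^d \sum_{i \ge 0} L v_j x^i$, so it is enough to show that $\sum_{i \ge 0} Lv x^i$ is finite-dimensional for each fixed nonzero $v$.

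Next I would set $L' := v^{-1} L v$, a subfield of $D$ isomorphic to $L$ via conjugation by $v^{-1}$, and verify that $D$ is left algebraic over $L'$ as well. Given any $d \in D$, apply the hypothesis to the element $vdv^{-1}$ to obtain a nontrivial relation $\sum l_i (v d v^{-1})^i = 0$ with $l_i \in L$. Since $(v d v^{-1})^i = v d^i v^{-1}$, multiplying on the left by $v^{-1}$ and on the right by $v$ produces $\sum (v^{-1} l_i v)\, d^i = 0$, a nontrivial left $L'$-relation on $d$.

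Applying this to $x$, I get a nontrivial relation $\sum_{j=0}^N l'_j x^j = 0$ with $l'_j \in L'$ and $l'_N \ne 0$. Writing $l'_j = v^{-1} n_j v$ with $n_j \in L$ and $n_N \ne 0$, and multiplying by $v$ on the left, yields $\sum_{j=0}^N n_j v x^j = 0$: a genuine left $L$-linear dependence among $v, vx, \ldots, vx^N$. Solving for $vx^N$ expresses it as an element of $\sum_{j < N} L v x^j$, and right multiplying by powers of $x$ then gives $vx^i \in \sum_{j<N} Lv x^j$ for every $i \ge 0$, bounding $\dim_L \sum_{i\ge 0} Lv x^i$ by $N$.

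The main obstacle is that naively left multiplying an $L$-relation $\sum l_i x^i = 0$ by $v$ produces $\sum (v l_i) x^i = 0$, and the coefficients $v l_i$ do not in general lie in $Lv$; there is no immediate way to extract a useful $L$-linear relation among the $vx^i$. Replacing $L$ by the conjugated subfield $L' = v^{-1} L v$ is exactly the device that circumvents this: left multiplication by $v$ sends $L'$-coefficients to $L$-coefficients, so a relation with coefficients in $L'$ becomes, after this multiplication, a left $L$-linear relation among the $v x^i$.
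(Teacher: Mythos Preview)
Your proof is correct and follows essentially the same idea as the paper's. The paper phrases the conjugation trick dually: instead of replacing $L$ by $v^{-1}Lv$ and observing that $x$ is left algebraic over it, the paper replaces $x$ by $vxv^{-1}$ and observes that this element is left algebraic over $L$ (so $\sum_i L(vxv^{-1})^i = \sum_i L v x^i v^{-1}$ is finite-dimensional, and right multiplication by $v$ finishes). Unwinding either version produces the same relation $\sum_j n_j v x^j = 0$ with $n_j \in L$, so the two arguments are the same up to notation.
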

\begin{proof} Write $V = \sum_{k=1}^m L a_k$ for some $a_k \in D$.  We claim that for any $a \in D$, $\sum_{i \geq 0} L a x^i$ is finite-dimensional over $L$ on the left.  To see this, note that $\sum_{i\ge 0} L (ax^i a^{-1}) = \sum_{i \ge 0} L (axa^{-1})^i$ is finite-dimensional over $L$ on the left since $D$ is left algebraic over $L$.  Since right multiplication by $a$ is a left $L$-vector space isomorphism, we see that $\sum_{i\ge 0} Lax^i$ is finite-dimensional over $L$ on the left, proving the claim.

This claim clearly implies the lemma since $\sum_{i \geq 0} Vx^i =  \sum_{k=1}^m \sum_{i \geq 0} L a_k x^i$.
\begin{comment}
We prove this by induction on ${\rm dim}_L(V)$.  If ${\rm dim}_L(V)=0$ then there is nothing to prove.  Suppose that the claim holds whenever ${\rm dim}_L(V)<m$ and consider the case that ${\rm dim}_L(V)=m$.  Pick nonzero $a\in V$ and let $V_0$ be an $(m-1)$-dimensional $L$-subspace of $V$ with $V=V_0+La$.  Then $$\sum_{i\ge 0} Vx^i = \sum_{i\ge 0} V_0x^i + \sum_{i\ge 0} Lax^i.$$  By the inductive hypothesis, $\sum_{i\ge 0} V_0x^i$ is finite-dimensional over $L$.  On the other hand, $\sum_{i\ge 0} L (ax^i a^{-1}) = \sum_{i \ge 0} L (axa^{-1})^i$ is finite-dimensional over $L$ on the left since $D$ is left algebraic over $L$. Since right multiplication by $a$ is a left $L$-vector space isomorphism, we see that $\sum_{i\ge 0} Lax^i$ is finite-dimensional over $L$ on the left.  Thus $\sum_{i\ge 0} Vx^i$ is finite-dimensional over $L$.  The result follows by induction.
\end{comment}
\end{proof}

\begin{prop}
\label{lem: fix}
\label{prop: fix}
Suppose that $D$ is a division $k$-algebra with a subfield $L \supseteq k$ such that $L/k$ is a finitely generated field extension.
\begin{enumerate}
\item If $D$ is algebraic over $L$ on one side, then for any finite-dimensional $k$-subspace $V \subseteq D$, $LVL$ is finite-dimensional over $L$ on both sides.
\item If $D$ is algebraic over $L$ on one side, it is algebraic over $L$ on both sides.
\end{enumerate}
\end{prop}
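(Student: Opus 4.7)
The plan is to prove (1) first, then deduce (2) as a corollary via Lemma~\ref{lem: V}. By symmetry, I may assume throughout that $D$ is left algebraic over $L$.

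For (1), note first that $LVL$ is a $k$-central $(L,L)$-subbimodule of $D$, so by Lemma~\ref{lem: bimod} it is enough to show $LVL$ is finite-dimensional over $L$ on just the left. Choosing a $k$-basis $v_1,\dots,v_n$ of $V$ gives $LVL = \sum_i Lv_iL$, so I reduce to the case $V = kv$ with $v \neq 0$. Since $v$ is invertible in $D$, I rewrite $LvL = L(vLv^{-1})v$, and because right multiplication by $v$ is a left $L$-module isomorphism, it suffices to prove that $LL^v$ is finite-dimensional over $L$ on the left, where $L^v := vLv^{-1}$.

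The key step builds a finite-dimensional left $L$-subspace of $D$ containing $LL^v$. Write $L^v = k(a_1,\dots,a_m)$ as a finitely generated field extension. Starting from $V_0 := L$, I inductively set $V_i := \sum_{j \geq 0} V_{i-1}\, a_i^{\,j}$; Lemma~\ref{lem: V} ensures each $V_i$ is finite-dimensional over $L$ on the left, and by construction $V_m$ contains $L\cdot k[a_1,\dots,a_m]$, the left $L$-span of the polynomial subring of $L^v$. I then claim $V_m$ actually contains the field $L^v$ itself. Given nonzero $q \in k[a_1,\dots,a_m]$, left algebraicity yields a minimal-degree left polynomial relation $c_n q^n + \cdots + c_1 q + c_0 = 0$ with $c_i \in L$; since $D$ is a domain, minimality forces $c_0 \neq 0$, and right-multiplication by $q^{-1}$ then expresses $q^{-1}$ as an explicit left $L$-combination of $1,q,\dots,q^{n-1}$, placing $q^{-1} \in V_m$. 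Now for arbitrary $p \in k[a_1,\dots,a_m]$, commutativity inside $L^v$ gives $pq^{-1} = q^{-1}p$, and expanding $q^{-1}$ on the left by the formula above puts $pq^{-1}$ into $V_m$ as well. Therefore $L^v \subseteq V_m$, so $LL^v \subseteq V_m$ is finite-dimensional over $L$ on the left, completing (1).

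For (2), take any $x \in D$. Lemma~\ref{lem: V} applied with $V = L$ shows $W := \sum_{i\geq 0} Lx^i$ is finite-dimensional over $L$ on the left, say $W = \sum_{i=0}^{N-1} Lx^i$. Applying part (1) to the finite-dimensional $k$-subspace $\sum_{i=0}^{N-1} kx^i$ shows that $WL = \sum_{i=0}^{N-1} Lx^iL$ is finite-dimensional over $L$ on the \emph{right}. Since $\sum_{i\geq 0} x^iL \subseteq WL$ is a right $L$-submodule, it is also finite-dimensional over $L$ on the right, so $x$ satisfies a nontrivial polynomial in $x$ with coefficients in $L$ on the right. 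As $x$ was arbitrary, $D$ is right algebraic over $L$.

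The main obstacle is the middle step of (1): although the left $L$-module $V_m$ easily captures monomials in the $a_i$, there is no reason a priori that it should be closed under inversion since $L$ and $L^v$ fail to commute. The trick is to use left algebraicity to get $q^{-1}$ into $V_m$ from the left, and then to exploit the internal commutativity of $L^v$ (i.e.\ $pq^{-1}=q^{-1}p$) to push $p$ to the right, where it lies safely in the polynomial ring $k[a_1,\dots,a_m] \subseteq V_m$.
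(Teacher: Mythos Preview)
Your proof is correct and follows essentially the same strategy as the paper: both iterate Lemma~\ref{lem: V} over a generating set to show that the left $L$-span of $V$ times the relevant polynomial subring is finite-dimensional, then pass to the full field and invoke Lemma~\ref{lem: bimod}. The only real difference is packaging: the paper works directly with generators of $L$ and handles the passage from $k[x_1,\dots,x_m]$ to $L$ by a clearing-denominators argument, whereas you first reduce to $V=kv$, conjugate to $L^v$, and then use left algebraicity plus the internal commutativity of $L^v$ to show explicitly that $q^{-1}$ and $pq^{-1}$ land in $V_m$; both devices accomplish the same step.
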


\begin{proof} (1). By symmetry, we may assume that $D$ is left algebraic over $L$.  Let $x_1,\ldots ,x_m$ be a set that generates $L$ as a field extension of $k$.   We now show that $LVL$ is finite-dimensional as a left $L$-vector space.  If $LVL$ is infinite-dimensional as a left $L$-vector space, then we claim that $$W:=\sum_{i_1,\ldots ,i_m\ge 0} LVx_1^{i_1}\cdots x_m^{i_m}$$ must be infinite-dimensional as a left $L$-vector space.  If not, say $\dim_L W  = d$, then given a set of $d + 1$ left $L$-independent elements of $LVL$, by multiplying by a common right denominator in $k[x_1, \dots, x_n]$ we get a set of $d+1$ left $L$-independent elements of $W$, a contradiction. Now by Lemma \ref{lem: V}, the space $V_1:=\sum_{i\ge 0} LVx_1^i$ is finite-dimensional as a left $L$-vector space.  Applying Lemma \ref{lem: V} again, we see that $V_2:=\sum_{i\ge 0} LV_1x_2^i = \sum_{i,j\ge 0} LVx_1^i x_2^j$ is finite-dimensional as a left $L$-vector space.  Continuing in this manner, we see that $W$ must be finite-dimensional as a left $L$-vector space.  Thus $LVL$ is finite-dimensional on the left.  It must also be finite-dimensional over $L$ on the right by Lemma~\ref{lem: bimod}.

(2).  Again it suffices to assume that $D$ is left algebraic over $L$.  Let $b \in D$.  Then $\sum_{i \geq 0} Lb^i  = \sum_{i=0}^n Lb^i$ for some $n$.  Let $V := \sum_{i=0}^n kb^i$.  Then $LVL$ is finite-dimensional over $L$ on both sides by part (1).  We have $LV = \sum_{i \geq 0} Lb^i$ and so $LVL = \sum_{i \geq 0} Lb^i L$.  Thus $\sum_{i \geq 0} b^i L$ must also be finite-dimensional over $L$ on the right.  Thus $D$ is also right algebraic over $L$.
\end{proof}

\noindent As remarked in the introduction, we know of no example of a division ring which is algebraic over some subfield and not locally PI.  However, we will need to control such potential rings in our proofs later and the result above will be useful for this purpose.  Another annoyance is the following:  if $D$ is a division ring which is neither left nor right algebraic over a division subring $E$, it is not clear that we can choose a $b \in D$ which is simultaneously neither left nor right algebraic over $E$.  We will finesse this issue below by adjoining an indeterminate and applying the following result.  %\ff{This is the result from your e-mail, which turns out to still be helpful.}
\begin{lem}
\label{lem: both sides}
Let $D$ be a division ring with division subring $E$.  Suppose that $D$ is neither left nor right algebraic over $E$. Then there is $b(t) \in D(t)$ such that $b(t)$ is neither left nor right algebraic over $E(t)$.
\end{lem}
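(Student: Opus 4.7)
The plan is to take $x, y \in D$ guaranteed by hypothesis, with $x$ not left algebraic over $E$ and $y$ not right algebraic over $E$, and to prove that the single element $b(t) := x + ty \in D(t)$ is simultaneously not left and not right algebraic over $E(t)$. The two halves of this verification will be dual to each other: a left-algebraic relation will be ruled out by specializing at $t=0$, while a right-algebraic relation will be ruled out by reading off the leading coefficient in $t$. The crucial structural fact being used is that $t$ is central in $D(t)$, which is what makes both the evaluation at $t=0$ and the leading-$t$-coefficient operation well-defined on relations involving $b(t)$.

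For the left-algebraic case, I would begin from a hypothetical non-trivial relation $\sum_{i=0}^{n} c_i(t) b(t)^i = 0$ with $c_i(t) \in E(t)$. Since $E$ is a division ring, $E[t]$ is an Ore domain, so the finitely many $c_i(t)$ share a common left denominator in $E[t]$; multiplying through reduces to the case $c_i(t) \in E[t]$, not all zero. Next, I would let $v$ be the minimum $t$-adic valuation among the nonzero $c_i(t)$ and write $c_i(t) = t^v e_i(t)$ with $e_i(t) \in E[t]$ and at least one $e_i(0) \neq 0$; cancelling the nonzero central factor $t^v$ in $D(t)$ leaves $\sum e_i(t) b(t)^i = 0$. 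Applying the evaluation homomorphism $D[t] \to D$, $t \mapsto 0$ (well defined because $t$ is central) then produces the non-trivial relation $\sum e_i(0) x^i = 0$ in $D$ with coefficients in $E$, contradicting the choice of $x$.

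For the right-algebraic case, I would start symmetrically from a hypothetical $\sum_{i=0}^{n} b(t)^i c_i(t) = 0$ and, using a common right denominator, reduce to $c_i(t) \in E[t]$, not all zero. Because $t$ is central, the polynomial $b(t)^i = (x+ty)^i$ has $t$-degree $i$ with leading coefficient $y^i$; writing $c_i(t) = a_i t^{d_i} + \text{(lower)}$ with $a_i \in E$ the leading coefficient and $d_i = \deg c_i(t)$, the product $b(t)^i c_i(t)$ has $t$-degree $i + d_i$ and leading coefficient $y^i a_i$. Setting $M := \max\{\, i+d_i : c_i \neq 0\,\}$, the vanishing of the $t^M$-coefficient in the relation yields $\sum_{i : i + d_i = M} y^i a_i = 0$, a non-trivial right-algebraic relation on $y$ over $E$ (a nonempty index set, with each $a_i$ nonzero by definition of leading coefficient), contradicting the choice of $y$.

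The only mildly delicate point in executing this plan is the reduction to $c_i(t) \in E[t]$, which relies on the existence of common one-sided denominators in $E(t) = Q(E[t])$; this follows from the Ore property of $E[t]$. Beyond that the arguments are essentially mechanical, and I do not anticipate any genuine obstacle.
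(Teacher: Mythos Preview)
Your proposal is correct and follows essentially the same approach as the paper: the paper also takes $b(t)=u+tv$ with $u$ not left algebraic and $v$ not right algebraic over $E$, specializes at $t=0$ to rule out a left relation, and reads off the top $t$-coefficient to rule out a right relation. Your write-up is in fact a bit more explicit about extracting the common power of $t$ and about why common one-sided denominators exist in $E(t)$, but the argument is the same.
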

\begin{proof}
Pick $u,v \in D$ such that $\sum_{i \geq 0} E u^i$ and $\sum_{i \geq 0} v^i E$ are direct.  Let $b(t):=u+tv$ in $D(t)$.  Suppose that $\sum_{i \geq 0} E(t)b(t)^i$ is not direct.  Clearing denominators on the left gives $\sum_{i=0}^n e_i(t) b(t)^i = 0$ where $e_0,\ldots ,e_n \in E[t]$ are not all zero.  We may assume that some $e_i(t)$ has nonzero constant term.  Then we can specialize at $t=0$ and we get that  $u$ is left algebraic over  $E$, a contradiction.  Thus $b(t)$ is not left algebraic over $E(t)$.

Next, suppose that  $\sum_{j=0}^n b(t)^j e_j(t) = 0$  with $e_j(t)$ not all zero in $E[t]$.  Let $m := \max_{j = 0}^n (\operatorname{deg}(e_j)+j)$.  By considering the coefficient of $t^m$ on both sides, we see that $v$ is right algebraic over $E$, again a contradiction.  Thus $b(t)$ is not right algebraic over $E(t)$ either.
\end{proof}

Ideally, one would like to be able to say that if one has a chain of nested division rings
$$D_m\subseteq D_{m-1}\subseteq \cdots \subseteq D_0$$ with each $D_i$ left algebraic over $D_{i+1}$, then $D_0$ is necessarily left algebraic over $D_m$.  This may indeed be the case, but it is not obvious in general.  The next lemma shows that a weaker version of this desired property holds.

In the following proof and in several results later, it will be convenient to use power series.  Suppose that $D$ is a division ring, and let $t$ be an indeterminate.  Then elements in $D(t)$ can be identified with elements of the Laurent power series ring
\[
D((t)) := \{ \sum_{i=n}^{\infty} a_i t^i \,| \, n \in \mb{Z}, a_i \in D \}.
\]
  Namely, $D[t]$ naturally embeds in this ring, and since it is easy to prove directly that $D((t))$ is also a division ring, $D(t)$ also embeds in $D((t))$.  In particular, of course, $(1-at)^{-1} = 1 + at + a^2t^2 + \dots$ for any $a \in D$. This embedding also played a prominent role in Makar-Limanov's original proof that the quotient division ring of the Weyl algebra contains a free subalgebra; we refer it as the canonical embedding.
\begin{lem} Let $D_2\subseteq D_1\subseteq D_0$ be a chain of division rings.  If $D_0(t)$ is left algebraic over $D_1(t)$ and $D_1(t)$ is left algebraic over $D_2(t)$, then $D_0$ is left algebraic over $D_2$.\label{lem: chain}
\end{lem}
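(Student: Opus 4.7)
The approach is to fix an arbitrary $x \in D_0$ and show that $x$ is left algebraic over $D_2$. First, I would invoke the canonical embedding $D_0(t) \hookrightarrow D_0((t))$ and introduce the element $y := (1-xt)^{-1}$, which inside $D_0((t))$ has the series expansion $y = \sum_{i \ge 0} x^i t^i$. Since $y \in D_0(t)$, the first hypothesis gives a non-trivial relation $\sum_{j=0}^n e_j(t)\, y^j = 0$ with $e_j(t) \in D_1[t]$ (after clearing a common right denominator).

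Next, I would expand this identity as a power series in $t$: using $y^j = \sum_i \binom{i+j-1}{j-1} x^i t^i$ together with the centrality of $t$, and collecting the coefficient of each $t^N$, I obtain for every $N \ge 0$ a polynomial identity $\sum_{k=0}^N a_{N,k}\, x^k = 0$ in $D_0$, where $a_{N,k} \in D_1$ is an explicit $\mathbb{Z}$-linear combination of the finitely many coefficients $e_{j,m}$ of the $e_j(t)$. A short argument based on the growth of $a_{N,N}$ as a polynomial in $N$ (after normalising the relation to be $t$-primitive, i.e., arranging that the $e_{j,0}$ are not all zero) shows these identities are non-trivial for all sufficiently large $N$.

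Now I would invoke the second hypothesis. Because $D_1(t)$ is left algebraic over $D_2(t)$, each coefficient $e_{j,m} \in D_1$ is in particular left algebraic over $D_2$, so the finitely many $e_{j,m}$ jointly span a finite-dimensional left $D_2$-subspace $W \subseteq D_1$ (which I enlarge to contain $1$). Thus every $a_{N,k}$ lies in $W$, and the relations $\sum_k a_{N,k}x^k = 0$ become $D_2$-linear dependencies among the generators $\{v_l x^k\}$ of the left $D_2$-module $M := \sum_{k \ge 0} W x^k$, where $v_1,\ldots,v_r$ is a $D_2$-basis of $W$. The plan is to conclude that $M$ is finite-dimensional over $D_2$, which immediately yields that $\sum_{k \ge 0} D_2 x^k \subseteq M$ is finite-dimensional and hence that $x$ is left algebraic over $D_2$.

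The main obstacle is this final step: passing from relations whose coefficients lie in the finite-dimensional space $W$ to the finite-dimensionality of the much larger module $M$ is a Kurosh-type difficulty, since a division subring generated by algebraic elements is not known to be finite-dimensional in general. To get past this, I would exploit the full strength of the second hypothesis by applying it not only to individual $e_{j,m}$ but to the polynomials $e_j(t) \in D_1[t]$ themselves (and to the normalised ratios $e_n(t)^{-1} e_j(t)$ obtained after making the original relation monic in $y$): each such element of $D_1(t)$ is left algebraic over $D_2(t)$, and extracting coefficients of $t$ from those algebraic relations produces further $D_2$-relations among products of the $v_l$ which, combined with the relations $\sum_k a_{N,k} x^k = 0$, should bound the rank of the family $\{v_l x^k\}$. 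This combination step is the one I would expect to require the most care.
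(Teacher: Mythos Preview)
Your opening move matches the paper's: take $y=(1-xt)^{-1}$ and use left algebraicity of $D_0(t)$ over $D_1(t)$ to obtain $\sum_{j=0}^{n} e_j(t)\,y^j=0$ with $e_j(t)\in D_1[t]$ and $e_n(t)\neq 0$. From there, however, your route does not close. Expanding $y^j$ termwise gives, for each $N$, a single relation $\sum_k a_{N,k}x^k=0$ with $a_{N,k}$ in a fixed finite-dimensional left $D_2$-space $W$ (note that this step uses nothing about algebraicity over $D_2$: any finite set spans a finite-dimensional left $D_2$-space). One relation per $N$, with the number of generators $v_\ell x^k$ growing linearly in $N$, cannot force $\sum_k Wx^k$ to be finite-dimensional; this is exactly the obstacle you flag. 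Your proposed fix---applying the second hypothesis to the $e_j(t)$ or to $e_n(t)^{-1}e_j(t)$---only produces $D_2$-relations among \emph{products} of the $e_{j,m}$, i.e., constraints on the subring of $D_1$ they generate, with no interaction with the powers $x^k$. I do not see how to combine the two families of relations, and the proposal does not indicate a mechanism.

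The idea you are missing is a simple reduction that replaces the module-theoretic difficulty by a composition of two ``left denominators''. Multiply your relation on the right by $(1-xt)^{n-1}$: every term with $j\le n-1$ lands in $D_0[t]$, so setting $p(t):=e_n(t)\in D_1[t]\setminus\{0\}$ one gets $p(t)(1-xt)^{-1}\in D_0[t]$. Now apply the second hypothesis to $p(t)^{-1}\in D_1(t)$ and repeat the same trick: from $\sum_i c_i(t)\,p(t)^{-i}=0$ with $c_i(t)\in D_2[t]$ one extracts a nonzero $q(t)\in D_2[t]$ with $q(t)p(t)^{-1}\in D_1[t]$. Then
\[
q(t)(1-xt)^{-1}=\bigl(q(t)p(t)^{-1}\bigr)\bigl(p(t)(1-xt)^{-1}\bigr)\in D_1[t]\cdot D_0[t]\subseteq D_0[t],
\]
and since $q(t)=\sum_i q_i t^i$ has coefficients in $D_2$ while $(1-xt)^{-1}=\sum_{k\ge 0} x^k t^k$, the coefficient of $t^N$ in this product is $\sum_i q_i x^{N-i}$, which must vanish for all large $N$. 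That is the desired left-algebraic relation for $x$ over $D_2$, obtained without ever expanding $y^j$ or bounding the dimension of any module.
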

\begin{proof}  Let $a\in D_0$. Since $D_0(t)$ is left algebraic over $D_1(t)$, the sum
$$\sum_{i\ge 0} D_1(t)(1-at)^{-i}$$ is not direct.  By clearing left denominators in a dependency relation we get a relation $\sum_{i  = 0}^m  r_i(t)(1-at)^{-i} = 0$ with $r_i(t) \in D_1[t]$, $r_m(t) \neq 0$.   Multiplying on the right by $(1-at)^{m -1}$ we see that $0 \neq p(t) = r_m(t)$ satisfies $p(t)(1-at)^{-1} \in D_0[t]$.

Similarly, since $D_1(t)$ is left algebraic over $D_2(t)$, we have that
$$\sum_{i\ge 0} D_2(t)p(t)^{-i}$$ is not direct and hence there is some nonzero $q(t)\in D_2[t]$ such that $q(t)p(t)^{-1}\in D_1[t]$.  Take $s_1(t):=q(t)p(t)^{-1}\in D_1[t]$.  Write
$$q(t)=q_0+\cdots + q_m t^m$$ with $q_0,\ldots ,q_m\in D_2$ and $q_m\neq 0$.  Then $q(t)(1-at)^{-1} = q(t)p(t)^{-1} p(t) (1-at)^{-1}\in D_0[t]$ and so by using the embedding of $D_0(t)$ into $D_0((t))$, we see that $$q_0 a^n+q_1 a^{n-1}+\cdots + q_m a^{n-m}=0$$ for all sufficiently large $n$.  It follows that $$q_0 a^m+\cdots + q_m = 0$$ with $q_m \neq 0$, and so $a$ is left algebraic over $D_2$.
\end{proof}

\section{The main criterion}
\label{sec: main} In this section, we prove our most general criterion for the existence of a free subalgebra in 2 generators in a division ring.  It depends crucially on Proposition~\ref{lem: Ro}, and thus on the uncountability of the base field $k$.

We begin with a few technical lemmas.  Throughout the next two lemmas and the following theorem, we will use the following hypothesis:
\begin{hypothesis}
\label{hyp: usual}
Let $D$ be a division algebra over a field $k$ with $k$-automorphism $\sigma: D \to D$, and let $E := \{ a \in D \,| \, \sigma(a) = a \}$ be the division subring of $\sigma$-fixed elements.
\end{hypothesis}

\begin{lem} Assume Hypothesis~\ref{hyp: usual} and let $b\in D$.   Suppose that there exists a nonnegative integer $m$ and $q_0,\ldots ,q_m\in D$, not all zero, such that for some $n \geq m$ and all $j \gg 0$ we have $\sum_{i=0}^m \sigma^j(b^{n-i}) q_i  = 0$.  Then $b$ is right algebraic over $E$.
\label{lem: sigma}
\end{lem}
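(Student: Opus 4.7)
The plan is to induct on $m$. The base case $m = 0$ is immediate: the hypothesis $\sigma^j(b^n) q_0 = 0$ with $q_0 \neq 0$ forces $b^n = 0$, and since $D$ is a domain this gives $b = 0$, so $b$ is trivially right algebraic over $E$.

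For the inductive step, I would first reduce to the case $q_0 \neq 0$ and $b \neq 0$. If $q_0 = 0$, one simply strips the leading term and inducts on $m$. Assuming $b \neq 0$, each $\sigma^j(b)$ is a unit in $D$, so left-multiplying by $\sigma^j(b)^{-(n-m)}$ reduces the relation to the shape
\[
R(j):\quad \sum_{i=0}^m \sigma^j(b)^{m-i}\, q_i \;=\; 0, \qquad j \geq J.
\]

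The crux is a shift-and-subtract manoeuvre. Applying $\sigma^{-1}$ to $R(j+1)$ yields a second relation
\[
\widetilde R(j):\quad \sum_{i=0}^m \sigma^j(b)^{m-i}\, \sigma^{-1}(q_i) \;=\; 0,
\]
also valid for $j \geq J$. To cancel the leading $\sigma^j(b)^m$ terms, I would right-multiply $R(j)$ by $q_0^{-1}\sigma^{-1}(q_0)$ so that its leading coefficient becomes $\sigma^{-1}(q_0)$, and then subtract $\widetilde R(j)$. This produces a shorter relation
\[
\sum_{i=1}^m \sigma^j(b)^{m-i}\, q'_i \;=\; 0, \qquad q'_i \,:=\, q_i q_0^{-1}\sigma^{-1}(q_0) - \sigma^{-1}(q_i), \quad j \geq J.
\]

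If some $q'_i \neq 0$, the inductive hypothesis (applied with $m' = n' = m-1$) delivers that $b$ is right algebraic over $E$. Otherwise $q_i q_0^{-1} = \sigma^{-1}(q_i q_0^{-1})$, i.e., $q_i q_0^{-1} \in E$, for each $i$; writing $q_i = e_i q_0$ with $e_i \in E$ and $e_0 = 1$, the relation $R(j)$ becomes $\sum_i \sigma^j(b)^{m-i} e_i q_0 = 0$, and right-multiplying by $q_0^{-1}$ and then applying $\sigma^{-j}$ (noting $e_i \in E$) delivers the honest right $E$-algebraic relation $\sum_{i=0}^m b^{m-i} e_i = 0$ with $e_0 = 1 \neq 0$.

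The main subtlety is keeping consistent track of sides in the noncommutative setting: the $q_i$ sit on the right of the powers of $\sigma^j(b)$, so the normalizing factor $q_0^{-1}\sigma^{-1}(q_0)$ used to match leading coefficients must also be placed on the right of $R(j)$. With that caveat, the argument is a clean descending induction on the effective degree of the pseudo-polynomial that the tail of the orbit $\{\sigma^j(b)\}_{j \gg 0}$ is forced to satisfy; the only genuine case split is whether the reduction is nontrivial (induction) or trivial (the original coefficients, renormalized, already exhibit $b$ as right algebraic over $E$).
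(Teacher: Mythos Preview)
Your proof is correct and follows essentially the same shift-and-subtract strategy as the paper: both arguments apply $\sigma^{\pm 1}$ to the relation, subtract to obtain a shorter one, and use induction/minimality to force the (normalized) coefficients into $E$, after which applying $\sigma^{-j}$ yields the desired right $E$-algebraic relation for $b$. The only cosmetic differences are that the paper normalizes $q_m = 1$ (so applying $\sigma$ and subtracting automatically kills the top term, avoiding your auxiliary right-multiplication by $q_0^{-1}\sigma^{-1}(q_0)$) and never needs to reduce to the case $n = m$.
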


 \begin{proof} Pick a non-trivial relation of the form $$\sum_{i=0}^m \sigma^j(b^{n-i}) q_i= 0$$ that holds for some $n\ge m$ and all $j \gg 0$.   Choose a relation with $m$ minimal among all such relations.  If $m = 0$, then since $D$ is a domain we obtain $b = 0$ and the result is clear, so assume that $m \geq 1$.  It is no loss of generality to assume that $q_m=1$.  Applying $\sigma$, we see that $$\sum_{i=0}^m  \sigma^j(b^{n-i}) \sigma(q_i) = 0$$ for all $j \gg 0$, and hence subtracting we have $$\sum_{i=0}^{m-1} \sigma^j(b^{n-i}) (q_i-\sigma(q_i)) = 0$$ for all $j \gg 0$.  By minimality of $m$ we have that $q_i=\sigma(q_i)$, in other words $q_i \in E$,  for $0\le i\le m$.   Then applying  $\sigma^{-j}$ to the original relation for $j \gg 0$ we get $$\sum_{i=0}^m (b^{n-i}) q_i= 0,$$ which shows that $b$ is right algebraic over $E$.
\end{proof}

\begin{lem} Assume Hypothesis~\ref{hyp: usual} and let $b\in D$.  If there exist $r_0,\ldots ,r_p\in D$ with $r_p\not =0$ such that $\sum_{j=0}^{p} r_j \sigma^j(b^n)=0$ for all $n \gg 0$, then $b$ is right algebraic over $E$.
\label{lem: not direct}
\end{lem}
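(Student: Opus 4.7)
My plan is to mimic the minimality argument of Lemma~\ref{lem: sigma}. Among all nontrivial relations $\sum_{j=0}^p r_j\sigma^j(b^n) = 0$ valid for $n \gg 0$ with $r_p \neq 0$, pick one with $p$ as small as possible. The case $p = 0$ immediately forces $b = 0$, so assume $p \geq 1$. After left-multiplying by $r_p^{-1}$, we may assume $r_p = 1$; a parallel argument applying $\sigma^{-1}$ to the whole relation shows we may also assume $r_0 \neq 0$ (otherwise the relation shifts to one of strictly smaller length).

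The key manipulation exploits the fact that the relation holds at both $n$ and $n+1$. Using $\sigma^j(b^{n+1}) = \sigma^j(b)\sigma^j(b^n)$, the $(n+1)$-relation reads $\sum_j r_j\sigma^j(b)\sigma^j(b^n) = 0$. Left-subtracting $\sigma^p(b)$ times the $n$-relation eliminates the $\sigma^p(b^n)$ term (this is where $r_p = 1$ is essential), giving
\[
\sum_{j=0}^{p-1} \bigl[r_j\sigma^j(b) - \sigma^p(b)r_j\bigr]\sigma^j(b^n) = 0 \qquad (n \gg 0).
\]
By minimality of $p$, all bracketed coefficients must vanish, so $r_j\sigma^j(b) = \sigma^p(b)r_j$ for $j = 0,\ldots,p-1$. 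Taking $j = 0$ gives $\sigma^p(b) = r_0br_0^{-1}$, and by iteration in the exponent, $\sigma^p(b^m) = r_0b^mr_0^{-1}$ for all $m \geq 1$. Substituting back into the original relation via $r_j\sigma^j(b^n) = \sigma^p(b^n)r_j$ yields $\sigma^p(b^n)\sum_j r_j = 0$, which forces $\sum_{j=0}^p r_j = 0$.

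The hardest part is the final step: deducing right algebraicity of $b$ over $E$ from these structural identities. This is where the lemma differs most sharply from Lemma~\ref{lem: sigma}; in the latter, minimality directly delivered coefficients in $E$, yielding algebraicity upon applying $\sigma^{-j}$. Here the identity $\sigma^p(b^m) = r_0b^mr_0^{-1}$ says that the automorphism $\tau := \phi_{r_0^{-1}}\circ\sigma^p$ (where $\phi$ denotes inner conjugation) fixes every power of $b$. I would complete the proof by iterating the minimality manipulation---possibly combined with an analogous argument applied to the $\sigma^{-1}$-shifted relation---to reduce to the case $\sigma^p(b) = b$, whereupon $b$ lies in $\mathrm{Fix}(\sigma^p)$, a division subring on which $\sigma$ induces an automorphism of order dividing $p$; classical results on finite-order automorphisms of division rings (bounding the dimension of the fixed-subring extension) then yield that $b$ is right algebraic over $E$. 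The main obstacle I anticipate is precisely this reduction to $\sigma^p(b) = b$: controlling the inner-conjugation ambiguity to force $\sigma^p$ to literally fix $b$, rather than merely act on it by an inner automorphism.
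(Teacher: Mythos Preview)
Your minimality argument is correct through the derivation of $r_j\sigma^j(b)=\sigma^p(b)r_j$, $\sum_j r_j=0$, and $\sigma^p(b)=r_0 b r_0^{-1}$; that part is clean. (In fact for $p=1$ it already finishes: $r_0=-1$ forces $\sigma(b)=b$, so $b\in E$.) But the final step you flag as an obstacle is a genuine gap, not a technicality. There is no visible mechanism to strip the inner conjugation and reduce to $\sigma^p(b)=b$: the relations you have extracted are stable under replacing $b$ by any conjugate, and further ``iteration'' of your $n\mapsto n+1$ trick yields nothing new because $r_j\sigma^j(b^2)=\sigma^p(b^2)r_j$ already follows from the $b$-case. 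Applying $\sigma^{\pm1}$ to the minimal relation only produces shifted copies of the same length. So for $p\ge 2$ your argument stalls at ``$\sigma^p$ acts on $b$ by an inner automorphism,'' which by itself does not imply right algebraicity over $E$.

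The paper's proof takes a completely different route and avoids this issue. Instead of trying to force the left coefficients $r_j$ into $E$, it converts the relation into one with \emph{right} coefficients and shifts in the power of $b$ (exactly the shape Lemma~\ref{lem: sigma} wants). Concretely: use the Ore property of $D[t]$ to choose a monic $Q(t)=\sum_{i=0}^m q_i t^i$ with $(1-\sigma^j(b)t)^{-1}Q(t)\in D[t]$ for $0\le j<p$; then observe that the hypothesis $\sum_j r_j\sigma^j(b^n)=0$ for $n\gg0$ says $\sum_j r_j(1-\sigma^j(b)t)^{-1}\in D[t]$, and by $\sigma$-shifting and induction this propagates to $(1-\sigma^j(b)t)^{-1}Q(t)\in D[t]$ for \emph{all} $j\ge 0$. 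Reading off power-series coefficients gives $\sum_{i=0}^m \sigma^j(b^{n-i})q_i=0$ for all $j$ and all $n\gg m$, and Lemma~\ref{lem: sigma} applies directly. The moral: the right object to minimize is not the $\sigma$-length $p$ of the given relation, but the degree of a common right denominator for the generating series $(1-\sigma^j(b)t)^{-1}$.
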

\begin{proof} Using the right Ore property of $D[t]$,  we may choose a polynomial $0 \neq Q(t) = q_0+q_1t+\cdots + q_m t^m$ in $D[t]$ with $q_m = 1$, such that for $0\le j < p$ we have 
 $$(1-\sigma^j(b)t)^{-1}Q(t)\in D[t].$$  We claim now that necessarily $(1-\sigma^j(b)t)^{-1}Q(t)\in D[t]$ for all $j \geq 0$.  The proof is by induction.   Suppose that $(1-\sigma^j(b)t)^{-1}Q(t)\in D[t]$ for all $0 \leq j < s$, where $s \geq p$.  We note that by applying $\sigma^{s-p}$ to the relation $\sum_{j=0}^{p} r_j \sigma^j(b^n)=0$, we see that  $\sum_{j=0}^{p} \sigma^{s-p}( r_j) \sigma^{s-p+j}(b^n)=0$ for all $n \gg 0$.
 
Equivalently, $$\sum_{j=0}^p \sigma^{s-p}( r_j) (1-\sigma^{s-p+j}(b)t)^{-1} = \sum_{n \geq 0} \bigg(\sum_{j=0}^p \sigma^{s-p}( r_j) \sigma^{s-p+j}(b^n) \bigg) t^n \in D[t].$$
Right multiplying by $Q(t)$, we see that $$\sum_{j=0}^p \sigma^{s-p}( r_j) (1-\sigma^{s-p+j}(b)t)^{-1} Q(t) \in D[t],$$
and since $(1-\sigma^i(b)t)^{-1}Q(t) \in D[t]$ for all $i < s$, we conclude that
$$\sigma^{s-p}(r_p)(1-\sigma^s(b)t)^{-1}Q(t)\in D[t],$$
and thus $(1-\sigma^s(b)t)^{-1}Q(t)\in D[t]$.  This completes the induction step and thus the  proof of the claim.

If we now consider the embedding of $D(t)$ in $D((t))$, since $(1- \sigma^j(b)t)^{-1} =
\sum_{i \geq 0} \sigma^j(b^i) t^i$, looking at the $n$th coefficient of $(1-\sigma^j(b)t)^{-1}Q(t)$ we see that
$$\sum_{i=0}^m \sigma^j(b^{n-i}) q_i = 0$$ for all $n \gg m$ and all $j\in \mathbb{N}$.  The result follows by Lemma \ref{lem: sigma}.
\end{proof}

With the lemmas above in hand, we are now ready to prove our main result, which relates the existence of free subalgebras of $D(x; \sigma)$ to algebraicity of elements over the $\sigma$-fixed field $E$.

\begin{thm} Assume Hypothesis~\ref{hyp: usual}.  Let $k$ be an uncountable field.  If $\operatorname{char} k =  p > 0$, then assume in addition that there is no element $u \in D$ such that $\sigma(u) = u + 1$.  If there exists $b \in D$ which is neither left nor right algebraic over $E$, then $D(x;\sigma)$ contains a free $k$-subalgebra on two generators.
\label{lem: delt}
\label{thm: delt}
\end{thm}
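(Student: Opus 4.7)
The plan is to construct an element $B\in D(t)\setminus E(t)$ such that $\sigma(U)-U\in B+E(t)$ has no solution $U\in D(t)$; by Theorem~\ref{main-criterion-thm} applied to the triple $(D(t),E(t),\sigma)$ this produces a free $k$-subalgebra of $D(t)(x;\sigma)$, and then Proposition~\ref{lem: Ro} transfers it back to $D(x;\sigma)$. To set this up, I extend $\sigma$ to $D(t)$ by $\sigma(t)=t$, so that by Lemma~\ref{dumb-lem}(1) the fixed subring is $E(t)$. The given $b\in D$ remains neither left nor right algebraic over $E(t)$: a putative algebraic relation over $E(t)$ can be cleared of denominators so as to have coefficients in $E[t]$, and since $t$ is central, comparing coefficients of each $t^m$ separately produces a nontrivial dependence of $b$ over $E$.

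In characteristic $p>0$ I must also transfer the hypothesis that no $u\in D$ satisfies $\sigma(u)=u+1$ to $D(t)$. If some $U\in D(t)$ satisfies $\sigma(U)=U+1$, let $A\subseteq D$ be the countably generated $\sigma$-stable subalgebra generated by the coefficients of a representation $U=f(t)g(t)^{-1}$. If $A$ is not Ore, then $D$ already contains a free $k$-algebra by \cite[Proposition 4.13]{KL} and we are done. Otherwise, $Q(A)\subseteq D$ is a division subring on which $\sigma$ restricts, and Lemma~\ref{lem: weyl} applied to $Q(A)$ produces $u\in Q(A)\subseteq D$ with $\sigma(u)=u+1$, contradicting the hypothesis.

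With these reductions in place, I would take $B:=(1-bt)^{-1}\in D(t)$. That $B\notin E(t)$ follows because if $B=p(t)q(t)^{-1}$ with $p,q\in E[t]$ and $q(0)\neq 0$, then expanding $Bq=p$ in $D[[t]]$ yields an eventual right-linear recurrence $\sum_{i=0}^{d}b^{n-i}q_i=0$ with $q_i\in E$, which is a nontrivial right polynomial relation for $b$ over $E$. Suppose for contradiction that there exist $U\in D(t)$ and $F\in E(t)$ with $\sigma(U)-U=B+F$. Expanding $U=\sum_n a_n t^n$ and $F=\sum_n f_n t^n$ as Laurent series in $D((t))$ and $E((t))$ respectively, the equation gives $\sigma(a_n)-a_n=b^n+f_n$ for $n\geq 0$, and iterating $\sigma$ yields $\sigma^j(a_n)-a_n=\sum_{k=0}^{j-1}\sigma^k(b^n)+jf_n$. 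The crucial rigidity from $U\in D(t)$ (as opposed to $D((t))$) is that $(a_n)$ satisfies an eventual linear recurrence $\sum_{i=0}^{d}a_{n-i}q_i=0$ for $n\gg 0$, with $q_i\in D$ and, after reduction, $q_0\neq 0$.

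The main obstacle is to combine these two structures to extract a nontrivial identity ruled out by Lemma~\ref{lem: not direct} (or Lemma~\ref{lem: sigma}). My plan is to apply $\sigma^j$ to the recurrence, substitute the iterated identity for $\sigma^j(a_{n-i})$, and subtract the original recurrence to eliminate the $a$-terms; this produces an identity relating $\sigma^k(b^{n-i})q_i$ to $f_{n-i}$-terms. Varying $j$ and forming finite linear combinations should further eliminate the $f$-terms, leaving a nontrivial relation purely in the $\sigma^j(b^n)$ with coefficients in $D$ of the shape forbidden by Lemma~\ref{lem: not direct}, forcing $b$ to be right algebraic over $E$ contrary to hypothesis. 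The delicate point will be controlling the case where the $q_i$ lie in $D\setminus E$, so that $\sigma^j(q_i)$ introduces additional $j$-dependence; I anticipate that this can be managed either by replacing $q(t)$ with a common left multiple involving $\sigma^j(q)$ for several $j$, or by iterating the cancellation procedure to isolate a clean relation of the required form.
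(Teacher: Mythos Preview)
Your overall architecture matches the paper's exactly: pass to $D(t)$, take $B=(1-bt)^{-1}$, assume Theorem~\ref{main-criterion-thm} fails so that $\sigma(U)-U=B+F$ has a solution, expand in $D((t))$ to get $\sigma(a_n)-a_n=b^n+f_n$, use $U\in D(t)$ to obtain a linear recurrence on the $a_n$, and then aim for a relation of the type forbidden by Lemma~\ref{lem: not direct}. Your treatment of the characteristic~$p$ reduction is in fact more careful than the paper's, which invokes Lemma~\ref{lem: weyl} directly without first passing to a countably generated Ore subalgebra.

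The genuine gap is precisely the ``delicate point'' you flag at the end, and your two suggested fixes do not close it. Applying $\sigma^j$ to $\sum_i a_{n-i}q_i=0$ replaces $q_i$ by $\sigma^j(q_i)$, so subtracting the original relation does \emph{not} eliminate the $a$-terms; it produces $\sum_i a_{n-i}(\sigma^j(q_i)-q_i)$, a relation of the same length. Iterating this does not shorten anything, and taking common multiples of $q(t)$ with its $\sigma$-translates only lengthens the recurrence. Likewise, ``varying $j$'' to kill the $f$-terms is problematic: the $f_{n-i}$ depend on $n$, and the coefficients they carry are $\sigma^j(q_i)\in D$, not scalars, so no finite $k$-linear combination in $j$ will make them vanish.

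The paper's device is different and is the missing idea. Work with coefficients on the \emph{left}, normalize the top coefficient to $1$, and apply the $\sigma$-derivation $\delta=1-\sigma$ rather than $\sigma^j$. Since $\delta(1)=0$, one application of $\delta$ genuinely kills the top $a$-term, while $\delta(a_{n-i})=b^{n-i}+f_{n-i}$ converts the remaining $a$-contributions into $b$-terms and $E$-terms. To make this work one must enlarge the class of relations being minimized: the paper minimizes $m$ among all relations of the shape
\[
\sum_{i=0}^{m} t_i\,a_{n-i}\;+\;\sum_{j=0}^{p} r_j\,\sigma^{-j}(b^n)\ \in\ W\qquad(n\gg 0),
\]
where $W$ is allowed to be any finite-dimensional right $E$-subspace of $D$. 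The $f_{n-i}$ produced by $\delta$ are absorbed into $W$ (this is how the $f$-terms are handled, not by linear combinations in $j$), and the new $b^{n-i}$ contributions are absorbed into the $\sigma^{-j}(b^n)$ sum after a shift. Minimality of $m$ then forces $\delta(t_i)=0$, i.e.\ $t_i\in E$, and a short computation using left non-algebraicity of $b$ drives $m$ down to $-1$. A second minimality argument, now on $\dim_E W$, reduces to $W=0$ and yields $\sum_j r_j\sigma^j(b^n)=0$, to which Lemma~\ref{lem: not direct} applies.
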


\begin{proof} We pick a $b$ which is non-algebraic over $E$ on both sides, so both $\sum_{j=0}^{\infty} b^jE$ and $\sum_{j=0}^{\infty} Eb^j$ are direct.  Suppose that $D(x;\sigma)$ does not contain a free algebra on two generators.   As $k$ is uncountable, $D(x; \sigma)(t)  = D(t)(x;\sigma)$ also does not contain a free $k$-algebra on two generators by Proposition~\ref{lem: Ro}, where we extend $\sigma$ to $D(t)$ by declaring that $\sigma(t)=t$.  Note that $E(t)$ is the division subalgebra of $D(t)$ of $\sigma$-fixed elements, by Lemma~\ref{dumb-lem}(1).  In particular, since $b \not \in E$, $(1-bt)^{-1}\not\in E(t)$.  If $\operatorname{char} k =  p > 0$, we note that that since $D$ contains no element $u$ with $\sigma(u) = u + 1$, $D(t)$ also contains no such element by Lemma~\ref{lem: weyl}.  Theorem \ref{main-criterion-thm} now shows that there exist $u(t)\in D(t)$ and $e(t)\in E(t)$ such that
$$u(t)-\sigma(u(t)) \ = \ (1-bt)^{-1} + e(t).$$
Using the canonical embedding of $D(t)$ into $D((t))$, we can write
$$u(t)=\sum_{j=-m}^{\infty} u_j t^j \qquad \text{and} \qquad e(t)=\sum_{j=-m}^{\infty} e_j t^j,$$
for some $m \in \mb{Z}$. Then looking at the coefficient of $t^n$ in the equation above for any $n \geq 0$, we see that
\begin{equation}
\label{eq: coeff}
u_n - \sigma(u_n) = b^n + e_n.
\end{equation}
Since $u(t)\in D(t)$, there exists some nonzero polynomial $s(t)=\sum_{i=0}^d s_i t^i\in D[t]$ such that $s(t)u(t)\in D[t]$.  Equivalently,
$$\sum_{i=0}^d s_i u_{n-i} = 0$$ for all $n \gg 0$.  We pick the smallest $m\in \{-1,0,1,2,\ldots\}$ such that there exists some finite-dimensional right $E$-vector subspace $W \subseteq D$, $t_0,\ldots ,t_m\in D$, and $r_0,\ldots, r_p\in D$ with $t_0,\ldots ,t_m,r_0,\ldots ,r_p$ not all zero, such that
\begin{equation}
\label{eq: imp}
\sum_{i=0}^m t_i u_{n-i} + \sum_{j=0}^p r_j\sigma^{-j}(b^n)\in W
\end{equation} for all $n$ sufficiently large.  (We take $m=-1$ to mean that the first sum is absent.)
Since $\sum_{i=0}^d s_i u_{n-i}=0$ for all sufficiently large $n$, we certainly have $m\le d$.

Our goal is to show that $m=-1$.  Suppose that $m\ge 0$.  We may assume $t_m=1$ by left-multiplying by an appropriate element of $D$, and then we may also assume that either $r_p\not =0$, or else $p=0$ and $r_0=0$. Note that $\delta := 1- \sigma$ is a $\sigma$-derivation, that is, that $\delta(xy) = \delta(x) \sigma(y) + x \delta(y)$ for all
$x, y \in D$.
Then if we apply $\delta$ to both sides of \eqref{eq: imp} and use also \eqref{eq: coeff}, we see that
$$\sum_{i=0}^{m-1}\delta(t_i) \sigma(u_{n-i})
+ \sum_{i=0}^m t_i (b^{n-i}+e_{n-i})\\
+ \sum_{j=0}^p \left(\delta(r_j)\sigma^{-j+1}(b^n) + r_j\sigma^{-j}(b^n) - r_j \sigma^{-j+1}(b^n)\right)$$
is in $\delta(W)$ for all $n \gg 0$.  Note that $\delta(W)$ is also finite-dimensional over $E$ on the right, as
$$\delta(\sum v_i E) = \sum \delta(v_i)E.$$
Thus applying $\sigma^{-1}$ we see for all $n \gg 0$ that
\begin{eqnarray*}
\sum_{i=0}^{m-1}\sigma^{-1}(\delta(t_i)) u_{n-i}
&+& \left( \sum_{i=0}^m \sigma^{-1}(t_ib^{-i}) \right) \sigma^{-1}(b^{n}) \\
&+& \sum_{j=0}^p \left(\sigma^{-1}(\delta(r_j))\sigma^{-j}(b^n) + \sigma^{-1}(r_j)\sigma^{-j-1}(b^n) - \sigma^{-1}(r_j) \sigma^{-j}(b^n)\right)\end{eqnarray*} is in $\sigma^{-1}(\delta(W)) + \sum_{i=0}^m \sigma^{-1}(t_i)E$, which is also finite-dimensional as a right $E$-vector space.  By minimality of $m$, this relation must be trivial.  This means that $\delta(t_i)=0$ for $0\le i<m$ and so $t_0,\ldots ,t_{m-1}\in E$; also, the coefficient of $\sigma^{-j}(b^n)$ must be $0$ for $0\le j\le p+1$.  Note that if $p>0$ and $r_p\not =0$, then the coefficient of $\sigma^{-p-1}(b^n)$ is $\sigma^{-1}(r_p)\neq 0$, a contradiction.  Hence $p=0$.  Then the coefficient of $b^n$ is just $\sigma^{-1}(\delta(r_0))-\sigma^{-1}(r_0)$.  Thus $\delta(r_0)=r_0$ and so $r_0=0$.  Finally, looking at the coefficient of $\sigma^{-1}(b^n)$ we see that
$$\sum_{i=0}^m t_i b^{-i}=0.$$
Multiplying by $b^m$ on the right we see that $\sum_{i \geq 0} Eb^i$ is not direct, a contradiction.  Thus $m=-1$.

We now have  a finite-dimensional right $E$-vector space $W$ and $r_0,\ldots ,r_p$ in $D$ not all zero such that \begin{equation} \label{eq: imp2}
\sum_{i=0}^p r_i\sigma^{-i}(b^n)\in W
\end{equation} for all $n$ sufficiently large.  We can now assume that $r_p \neq 0$ and we pick the right $E$-vector space $W$ of smallest dimension for which there exists such a relation.  We claim that $W=(0)$.  If not, then by left-multiplying our relation by an appropriate nonzero element, we may assume that $1\in W$.  Let $1=a_1,\ldots , a_{\ell}$ be a basis for $W$ as a right $E$-vector space, where $\ell \geq 1$.  Then
$$\sum_{i=0}^p r_i\sigma^{-i}(b^n) \ \in  \  \sum_{i=1}^{\ell} a_i E$$
for all $n \gg 0$. If we apply $\delta\circ \sigma^{-1} = \sigma^{-1} - 1$ to both sides of this relation, we see there are constants $r_0',\ldots ,r'_{p+1} \in D$ such that $$\sum_{i=0}^{p+1} r_i'\sigma^{-i}(b^n) \in \sum_{i=2}^{\ell} \delta(\sigma^{-1}(a_i))E$$ for all $n \gg 0$.  Since $r'_{p+1} = \sigma^{-1}(r_p) \neq 0$, this contradicts the minimality of the dimension of $W$.  Hence $W=(0)$.

Thus we see that there exist $r_0,\ldots ,r_p\in D$ not all zero such that
$$\sum_{j=0}^p r_j \sigma^j (b^n) = 0$$ for all $n$ sufficiently large.  Thus by Lemma \ref{lem: not direct}, $b$ is right algebraic over $E$, a contradiction.  The result follows.
\end{proof}

The fact that the result above obtains free subalgebras only of division algebras of the form $D(x; \sigma)$ is not as special as it might first appear.  In fact, we have the following immediate corollary.
\begin{cor}
Let $D$ be a division algebra over an uncountable field $k$.  Suppose there are nonzero $a,b\in D$ such that $b$ is neither left nor right algebraic over $E$, where $E := C(a; D)$ is the centralizer of $a$ in $D$.    If the characteristic of $k$ is positive, assume in addition that $D$ does not contain an element $u$ such that $aua^{-1} = u + 1$.  Then $D$ contains a free $k$-subalgebra on two generators.
\label{cor: xxx}
\end{cor}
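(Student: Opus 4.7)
The plan is to apply Theorem~\ref{thm: delt} with $\sigma$ taken to be the inner automorphism of $D$ induced by conjugation by $a$, and then use Proposition~\ref{lem: Ro} to transfer the resulting free subalgebra from an indeterminate extension back down to $D$ itself.

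First I would set $\sigma: D \to D$, $\sigma(d) = ada^{-1}$; this is a $k$-algebra automorphism of $D$. Its fixed division subring is exactly $E = C(a;D)$, so Hypothesis~\ref{hyp: usual} is satisfied. In characteristic $p > 0$, the assumption of the corollary that no $u \in D$ satisfies $aua^{-1} = u+1$ is precisely the assumption that no $u \in D$ satisfies $\sigma(u) = u+1$. Since the chosen $b$ is by hypothesis neither left nor right algebraic over $E$, Theorem~\ref{thm: delt} applies and gives that $D(x;\sigma)$ contains a free $k$-subalgebra on two generators.

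Next I would observe that because $\sigma$ is inner, $D(x;\sigma)$ is actually isomorphic to an ordinary commutative indeterminate extension of $D$. Indeed, letting $y := a^{-1}x$ inside $D[x;\sigma]$, for any $d \in D$ we have
\[
yd \;=\; a^{-1}xd \;=\; a^{-1}\sigma(d)x \;=\; a^{-1}(ada^{-1})x \;=\; d\,a^{-1}x \;=\; dy,
\]
so $y$ is central. Since $x = ay$, this shows $D[x;\sigma] = D[y]$ with $y$ a central indeterminate, and passing to quotient division rings yields $D(x;\sigma) \cong D(y) = D(t)$ where $t$ is a commutative indeterminate over $D$.

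Combining these two observations, $D(t)$ contains a free $k$-subalgebra on two generators. By Proposition~\ref{lem: Ro}, which uses the uncountability of $k$, this implies that $D$ itself contains a free $k$-subalgebra on two generators, as required. The main conceptual step is recognizing that the automorphism supplied by Theorem~\ref{thm: delt} can be taken inner (conjugation by $a$), which is exactly what collapses the skew extension to a commutative one and makes Proposition~\ref{lem: Ro} available; no serious obstacle remains once this viewpoint is adopted.
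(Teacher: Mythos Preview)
Your proof is correct and follows essentially the same route as the paper: define $\sigma$ as conjugation by $a$, apply Theorem~\ref{thm: delt} to obtain a free subalgebra in $D(x;\sigma)$, observe that $D(x;\sigma)\cong D(t)$ via the central element $a^{-1}x$ (the paper phrases this as identifying $x$ with $at$), and descend to $D$ via Proposition~\ref{lem: Ro}. Your write-up simply spells out the centrality check in slightly more detail than the paper does.
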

\begin{proof}  Let $\sigma:D\to D$ denote the inner automorphism given by conjugation by $a$.  Then $E$ is the fixed division subring of $\sigma$.  Note that $D(x;\sigma) \cong D(t)$, by identifying $x$ with $at$.  If $\operatorname{char} p > 0$, then by hypothesis there is no $u \in D$ such that $\sigma(u) = u + 1$.  It follows from Theorem~\ref{thm: delt} that $D(x;\sigma)\cong D(t)$ contains a free $k$-algebra on two generators.  Proposition~\ref{lem: Ro} then gives that $D$ contains a free $k$-algebra on two generators.
\end{proof}

We also do not need to worry about the need to pick an element which is simultaneously non-algebraic over $E$ on both sides in Corollary~\ref{cor: xxx}, as follows.  
\begin{cor}
\label{cor: xxx2}
Let $D$ be a division algebra over an uncountable field $k$.  Suppose there is a nonzero $a \in D$ such that $D$ is neither left nor right algebraic over $E := C(a; D)$.    If the characteristic of $k$ is positive, assume in addition that $D$  does not contain an element $u$ such that $aua^{-1} = u + 1$.  Then $D$ contains a free $k$-subalgebra on two generators.
\end{cor}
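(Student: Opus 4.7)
The plan is to deduce this from Corollary~\ref{cor: xxx} via the adjoin-an-indeterminate technique that underlies Proposition~\ref{lem: Ro}. The only hypothesis of Corollary~\ref{cor: xxx} that is not directly supplied here is the existence of a single element $b$ that is simultaneously non-algebraic over $E$ on both sides. This is exactly the deficiency that Lemma~\ref{lem: both sides} was designed to repair: starting from the assumption that $D$ is neither left nor right algebraic over $E$, it produces $b(t) \in D(t)$ which is neither left nor right algebraic over $E(t)$. Letting $\sigma$ denote conjugation by $a$, extended to $D(t)$ by $\sigma(t)=t$, Lemma~\ref{dumb-lem}(2) identifies $E(t) = C(a; D(t))$, so $b(t)$ is precisely the element demanded by Corollary~\ref{cor: xxx} applied to $D(t)$.

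The step requiring some care is the characteristic $p > 0$ hypothesis for $D(t)$: I must verify that if no $u \in D$ satisfies $aua^{-1} = u + 1$, then no $u \in D(t)$ does either. For this I would use the canonical embedding $D(t) \hookrightarrow D((t))$ and extend $\sigma$ coefficient-wise to $D((t))$ (consistent with $\sigma(t)=t$). Writing any putative $u = \sum_{j \geq n} u_j t^j$ with $u_j \in D$, the relation $\sigma(u) = u+1$ forces $\sigma(u_0) = u_0 + 1$ by equating the $t^0$-coefficients, producing an element of $D$ satisfying the forbidden relation and contradicting the hypothesis. (This is in the spirit of Lemma~\ref{lem: weyl}, but does not require the countable-generation hypothesis of that lemma.)

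With both hypotheses verified, Corollary~\ref{cor: xxx} applied to $D(t)$, as a division algebra over the uncountable field $k$, with the elements $a$ and $b(t)$ yields a free $k$-subalgebra of $D(t)$ on two generators. Proposition~\ref{lem: Ro} then transports this free subalgebra back to $D$, completing the proof. The substantive work was already carried out in Lemma~\ref{lem: both sides}, Proposition~\ref{lem: Ro}, and Theorem~\ref{thm: delt}, so this corollary is essentially a bookkeeping assembly of the earlier machinery; I do not foresee a serious obstacle beyond the small Laurent-series argument above.
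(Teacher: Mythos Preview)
Your proposal is correct and follows essentially the same route as the paper: Lemma~\ref{lem: both sides} supplies $b(t)$, Lemma~\ref{dumb-lem}(2) identifies $E(t)=C(a;D(t))$, Corollary~\ref{cor: xxx} gives a free subalgebra of $D(t)$, and Proposition~\ref{lem: Ro} brings it down to $D$.

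The one genuine difference is your handling of the characteristic-$p$ hypothesis. The paper cites Lemma~\ref{lem: weyl} to rule out a solution of $\sigma(u)=u+1$ in $D(t)$, but that lemma carries the hypothesis that $D=Q(A)$ for a countably generated Ore domain $A$, which is not assumed in the present corollary. Your Laurent-series argument sidesteps this: since $\sigma$ is conjugation by $a$ (hence acts coefficient-wise on $D((t))$), comparing $t^0$-coefficients in $\sigma(u)=u+1$ immediately yields $\sigma(u_0)=u_0+1$ with $u_0\in D$. This is both shorter and strictly more general than invoking Lemma~\ref{lem: weyl}, so your variant is actually a cleaner fit for the stated hypotheses.
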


\begin{proof} Since $D$ is not algebraic over $E$ on either side, Lemma~\ref{lem: both sides} shows that we may pick a single element $b(t) \in D(t)$ which is not algebraic over $E(t)$ on either side.  If $k$ has characteristic $p > 0$, then letting $\sigma: D(t) \to D(t)$ be conjugation by $a$, Lemma~\ref{lem: weyl} shows that $D(t)$ also contains no element $u$ such that $\sigma(u) = u + 1$.  Since $E(t) = C(a; D(t))$ by Lemma~\ref{dumb-lem}(2), the previous corollary applied to $D(t)$ shows that $D(t)$ contains a free $k$-algebra on two generators.  Thus $D$ does also by Proposition~\ref{lem: Ro}.
\end{proof}

We mention one more immediate application.
\begin{cor} Let $D$ be a division ring of characteristic $0$ that is not algebraic over its centre and let $\sigma$ be an automorphism of $D$.  If
$$k:=\{x\in D \,| \, \sigma(x)=x\}$$
is uncountable and contained in the centre of $D$, then $D(x;\sigma)$ contains a free $k$-algebra on two generators. \label{cor: 2}
\end{cor}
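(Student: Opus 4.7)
The plan is to deduce this as a direct application of Theorem~\ref{thm: delt}, whose hypotheses almost match those given here; the only work is to produce a single element of $D$ which is neither left nor right algebraic over the fixed division ring $E = k$.

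First I would verify that Hypothesis~\ref{hyp: usual} is in force: since $k \subseteq Z(D)$ by assumption, $D$ is a $k$-algebra, and $\sigma$ is automatically a $k$-automorphism because $k$ is precisely the set of $\sigma$-fixed elements. The fixed division subring $E$ in the statement of Theorem~\ref{thm: delt} is therefore $k$ itself, which is by hypothesis uncountable. Since $\operatorname{char} k = 0$, the Weyl-element caveat in Theorem~\ref{thm: delt} is vacuous.

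Next I would find the required $b$. Because $D$ is not algebraic over its centre $Z(D)$, choose any $b \in D$ that is not algebraic over $Z(D)$. Since $k \subseteq Z(D)$, any polynomial relation with coefficients in $k$ would also be a polynomial relation with coefficients in $Z(D)$; hence $b$ is not algebraic over $k$. Moreover, because $k$ is central, for any $c_0, \dots, c_m \in k$ the relations $\sum c_i b^i = 0$ and $\sum b^i c_i = 0$ coincide, so left algebraicity and right algebraicity over $E = k$ are the same notion. Thus $b$ is neither left nor right algebraic over $E$.

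Finally, I would invoke Theorem~\ref{thm: delt}: its hypotheses are now all satisfied, and its conclusion says exactly that $D(x;\sigma)$ contains a free $k$-subalgebra on two generators, as required. No step here is truly difficult; the only subtlety to watch is that centrality of $k$ is essential for step three, since without it left/right algebraicity over $k$ could diverge and simply picking an element non-algebraic over $Z(D)$ would not obviously give non-algebraicity on both sides over a non-central $E$.
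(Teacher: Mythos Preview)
Your proposal is correct and follows essentially the same approach as the paper: identify $E=k$, pick $b\in D$ not algebraic over $Z(D)$ (hence not algebraic over the central subfield $k$, so neither left nor right algebraic over $E$), and apply Theorem~\ref{thm: delt}. The paper's proof is terser but the argument is identical.
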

\begin{proof}  By assumption, in the notation of Hypothesis~\ref{hyp: usual} we have $E=k$, and $D$ is not algebraic over $k$.  Thus we can find $b\in D$ such that $\sum_{j\ge 0} Eb^j = \sum_{j \geq 0} b^j E$ is direct.  It follows that $D(x;\sigma)$ contains a free $k$-algebra on two generators by Theorem \ref{lem: delt} in the case that the characteristic of $k$ is zero.
\end{proof}

\noindent  We note that the preceding corollary recovers yet again the special case of \cite[Theorem 1.1]{BR} given in Theorem~\ref{thm: BR} above, since if $D = K$ is a field then it is an algebraic extension of its subfield $k$ of $\sigma$-fixed elements if and only if every element of $K$ is on a finite $\sigma$-orbit.

\section{The quotient rings of noetherian domains} In this short section, we improve the criterion of the previous section even further for $D := Q(A)$, where $A$ is is a countably generated noetherian algebra over an algebraically closed uncountable field of characteristic $0$.

\begin{prop} Let $k$ be an uncountable field of characteristic $0$ and let $D$ be a division algebra over $k$.  Suppose that $D$ has a maximal subfield $K$ that is a finitely generated extension of $k$,  and such that $D$ is not left algebraic over $K$.  Then $D$ contains a free $k$-algebra on two generators.  
\label{maxsub}

\end{prop}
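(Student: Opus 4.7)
By Proposition \ref{prop: fix} (applied with the finitely generated subfield $K$), the hypothesis that $D$ is not left algebraic over $K$ implies that $D$ is also not right algebraic over $K$. I will argue by induction on $m := \operatorname{trdeg}(K/k)$. The case $m = 0$ is vacuous: if $K/k$ were algebraic and finitely generated, it would be finite, forcing $[K : Z(D)] \leq [K:k] < \infty$; by the classical structure theorem for division rings with a maximal subfield of finite degree over the centre, $D$ would be a central simple $Z(D)$-algebra of finite dimension, in particular PI and algebraic over $K$, contrary to hypothesis.

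For the base case $m = 1$, pick $a \in K$ transcendental over $k$; then $[K:k(a)] < \infty$. Set $E := C(a;D) \supseteq K$, so that $Z(E) \supseteq k(a)$ and $[K:Z(E)] < \infty$. The classical theorem then gives $[E:Z(E)] = [K:Z(E)]^2 < \infty$, so $E$ is finite-dimensional over $K$ on both sides. Hence $D$, which is not algebraic over $K$ on either side, cannot be algebraic over $E$ on either side either (any relation of bounded left $E$-length yields one of bounded left $K$-length, and similarly on the right). Corollary \ref{cor: xxx2} applied with this $a$ supplies a free $k$-algebra on two generators inside $D$.

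For the inductive step $m \geq 2$, pick $a \in K$ transcendental over $k$ and set $E := C(a;D)$, so that $Z(E) \supseteq k(a)$ is an uncountable field of characteristic $0$ and $K/Z(E)$ is finitely generated of transcendence degree at most $m-1$. Applying Lemma \ref{lem: chain} in contrapositive to the chain $K \subseteq E \subseteq D$, together with Proposition \ref{prop: fix} applied to $E(t)$ as a $k(t)$-algebra with subfield $K(t)$ (finitely generated over $k(t)$) to make the failure of algebraicity of $E(t)$ over $K(t)$ left--right symmetric, reduces the analysis to two exhaustive cases. In \emph{Case A}, $D(t)$ is not algebraic over $E(t)$ on either side: Lemma \ref{lem: both sides} produces $b \in D(t,s)$ non-algebraic on either side over $E(t,s) = C(a; D(t,s))$ (the latter identification by iterating Lemma \ref{dumb-lem}(2)), and Corollary \ref{cor: xxx2} applied in $D(t,s)$ yields a free $k$-subalgebra on two generators, which descends to $D$ by two applications of Proposition \ref{prop-extension}.

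In \emph{Case B}, $E(t)$ is not algebraic over $K(t)$ on either side. Viewing $E(t)$ as a division algebra over its centre $Z(E(t)) = Z(E)(t)$, which is uncountable of characteristic $0$, with maximal subfield $K(t)$ (by Lemma \ref{dumb-lem}(2)) finitely generated of transcendence degree at most $m - 1$ over $Z(E)(t)$, the inductive hypothesis produces a free $Z(E)(t)$-subalgebra on two generators inside $E(t)$; by the Makar-Limanov--Malcolmson result (cited in the proof of Proposition \ref{prop-extension}) this is automatically a free $k$-subalgebra, and Proposition \ref{prop-extension} descends it from $E(t)$ to $E \subseteq D$. The main obstacle is orchestrating the inductive scheme so that the shifting base field ($k \rightsquigarrow Z(E)(t)$) strictly decreases $m$, and combining Lemma \ref{lem: chain} with Proposition \ref{prop: fix} and the Makar-Limanov--Malcolmson descent cleanly enough that only the two productive cases arise.
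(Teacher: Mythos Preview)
Your proof is correct and follows essentially the same inductive scheme as the paper, but with a different choice of induction variable: you induct on $\operatorname{trdeg}(K/k)$, whereas the paper inducts on the size $d$ of a minimal generating set for $K$ over $k$. This difference is not cosmetic. With the paper's choice, the base case $d=1$ is immediate: if $K = k(a_1)$ then $C(a_1;D) = C(K;D) = K$ since $K$ is maximal, so Corollary~\ref{cor: xxx2} applies directly with $E = K$. Your choice forces you, in the cases $m=0$ and $m=1$, to confront the possibility that $E := C(a;D)$ strictly contains $K$, and you dispose of this by invoking the classical dimension theorem (a maximal subfield of finite degree over the center forces the division ring to be finite-dimensional over its center). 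That result is indeed correct and classical, but it is external to the paper; the paper's induction variable is chosen precisely so that no such appeal is needed.

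The inductive steps are equivalent in content. Your Cases~A and~B are exactly the two horns of the paper's argument by contradiction: the paper assumes $D(t)$ contains no free subalgebra, deduces via Corollary~\ref{cor: xxx2} that $D(t)$ is algebraic over $D_1(t) = C(a_1;D)(t)$ on some side, deduces via the inductive hypothesis that $D_1(t)$ is algebraic over $K(t)$, and then combines these with Lemma~\ref{lem: chain} (plus Proposition~\ref{prop: fix} for left--right symmetry) to contradict the hypothesis on $D$. Your organization as a case split, with the shifting base field $k \rightsquigarrow Z(E)(t)$ and Makar-Limanov--Malcolmson descent, is exactly what the paper does as well (it passes to base field $k(t)(a_1)$). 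One small redundancy: in your Case~A you introduce a second indeterminate $s$ and invoke Lemma~\ref{lem: both sides} manually, but Corollary~\ref{cor: xxx2} already takes the hypothesis ``neither left nor right algebraic'' and handles the passage to a single witness internally, so you may apply it directly to $D(t)$ and descend once.
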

\begin{proof}  We will prove this by induction on the size of a minimal generating set for $K$ as an extension of $k$.   By Lemma~\ref{lem: fix}, the hypothesis that $D$ is not left algebraic over $K$ implies that $D$ is neither left nor right algebraic over $K$.  We note that $K\neq k$ since $D\neq K$.  Let $a_1,\ldots ,a_d\in K$ be a minimal set of generators of $K$ as an extension of $k$.

Suppose that $D$ does not contain a free $k$-algebra on two generators.  Suppose that $d = 1$, so that $K=k(a_1)$ with $a_1 \not \in k$.   Note that since $K$ is a maximal subfield, it is its own centralizer and thus in fact $K = C(a_1; D)$, the centralizer of $a_1$ in $D$.    Since $D$ does not contain a free $k$-algebra on two generators, $D$ must be either left or right algebraic over $K$, by Corollary \ref{cor: xxx2}; but this is a contradiction.  Thus the result holds for the case $d=1$.

Now assume $d \ge 2$ and that the result holds for smaller $d$.  As above, suppose that $D$ does not contain a free $k$-algebra on two generators.  Let $D_1 :=C(a_1 ; D)$.  Since $a_1\in D$, we have $D_1(t)=C(a_1 ; D(t))$ by Lemma~\ref{dumb-lem}(2).  Then $k(t)(a_1)\subseteq Z(D_1(t))$ and $K(t)$ is a maximal subfield of $D_1(t)$ (by Lemma~\ref{dumb-lem}(2) again) that is generated by $d-1$ elements over $k(t)(a_1)$. By Proposition~\ref{lem: Ro}, $D(t)$ cannot contain a free $k$-algebra on two generators, and hence $D_1(t)$ cannot contain a free $k(t)(a_1)$-algebra on two generators.  By the inductive hypothesis, $D_1(t)$ is left algebraic over $K(t)$, and hence $D_1(t)$ is also right algebraic over $K(t)$ by Lemma~\ref{lem: fix}.  But $D(t)$ must also be either left or right algebraic over $D_1(t)$, or else Corollary~\ref{cor: xxx2} implies that $D(t)$ contains a free algebra.  Hence $D$ must be either left or right algebraic over $K$ by Lemma~\ref{lem: chain} (or its right-handed version), again a contradiction.  The result follows.
\end{proof}

\begin{cor}

\label{cor-mainthm2}
Let $A$ be a noetherian domain which is a countably generated algebra over an algebraically closed uncountable base field $k$ of characteristic $0$.  Then $Q(A)$ either contains a free $k$-algebra on two generators or it is left algebraic over every maximal subfield.
\end{cor}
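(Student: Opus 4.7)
The plan is to derive this corollary as an essentially immediate consequence of Proposition~\ref{maxsub}. The only gap to close is that Proposition~\ref{maxsub} requires the maximal subfield $K$ to be a finitely generated extension of $k$, whereas the corollary makes no such restriction. Thus the entire content of the corollary lies in showing that, in our specific setting of a countably generated noetherian domain $A$ over an uncountable algebraically closed field $k$ of characteristic zero, every maximal subfield $K$ of $D := Q(A)$ is automatically a finitely generated field extension of $k$.

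I would establish this finiteness statement in two stages. First, use the noetherian property of $A$ to bound the transcendence degree of any maximal subfield $K$ of $D$: by a Goldie- or Resco-style argument on centralizers in noetherian quotient division rings, a commutative subring of $Q(A)$ has finite transcendence degree over $k$, bounded by an appropriate dimension-theoretic invariant of $A$. This gives $K$ of finite transcendence degree, say $d$. Second, choose a transcendence basis $x_1,\dots,x_d \in K$ and show that the algebraic extension $K/k(x_1,\dots,x_d)$ is of finite degree, so that $K$ is finitely generated over $k$. It is here that the combined hypotheses that $k$ is uncountable algebraically closed of characteristic zero and $A$ is countably generated are needed, in order to rule out infinite algebraic towers sitting inside $D$.

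Once every maximal subfield of $D$ has been shown to be finitely generated over $k$, the corollary follows immediately from Proposition~\ref{maxsub}: if $D$ does not contain a free $k$-algebra on two generators, then Proposition~\ref{maxsub} forces $D$ to be left algebraic over every finitely generated maximal subfield, and hence, by the first step, over every maximal subfield whatsoever.

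The main obstacle is the finite-generation statement for maximal subfields of $Q(A)$; the remainder of the argument is a direct invocation of Proposition~\ref{maxsub}. I expect the finite-generation step to rest on the standard toolbox of centralizers in noetherian quotient division rings, where the specific hypotheses of the corollary combine to yield strong finiteness constraints on commutative subrings of $D$.
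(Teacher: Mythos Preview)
Your proposal is correct and follows essentially the same route as the paper: reduce to Proposition~\ref{maxsub} by first showing that every maximal subfield of $Q(A)$ is finitely generated over $k$. The paper simply invokes \cite[Corollary~1.3]{Bell3} for this finite-generation step rather than proving it directly, but your outline of that step (finite transcendence degree from the noetherian hypothesis, then finiteness of the residual algebraic extension using the uncountable, algebraically closed, countably-generated hypotheses) is the right shape.
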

\begin{proof} The hypotheses on $A$ guarantee that all maximal subfields of $Q(A)$ are finitely generated over $k$ \cite[Corollary 1.3]{Bell3}.  If $Q(A)$ has some maximal subfield $K$ over which it is not left algebraic then $Q(A)$ contains a free algebra by Proposition \ref{maxsub}.
\end{proof}

\section{Domains of GK-dimension $2$}

\label{GK2}
In this section, we consider finitely generated $k$-algebras $A$ with GK-dimension strictly less than $3$ that are domains.  It is conjectured that such an algebra must have GK-dimension either $0$, $1$, or $2$, but currently it is unknown if algebras of dimenson between $2$ and $3$ are possible.  We show that if the base field is algebraically closed, then either $A$ is PI or $Q(A)$ contains a free $k$-algebra on two generators.   We note that our proof uses ideas of Bergman.  The main idea of the proof of Bergman's gap theorem has many other applications; for example, it can be used to show that the Kurosh conjecture holds for algebras of GK-dimension $1$, and also to give an easy proof of the Small-Warfield theorem for prime Goldie algebras of GK-dimension $1$.  The following lemma is another application of the basic idea of Bergman's proof.

 \begin{lem} Let $\Sigma := \{x_1,\ldots, x_e\}$ be a finite alphabet, let $W\subseteq \Sigma^*$ be a subset of finite words over the alphabet $\Sigma$, and let $f(n)$ denote the number of words of length $n$ in $W$ for each natural number $n$.  Suppose that $W$ has the following properties:
 \begin{enumerate}\item $W$ is infinite;
 \item if $w\in W$ and $v$ is a subword of $w$ then $v\in W$;
 \item there is a natural number $d$ such that $f(d)\le d$.
\end{enumerate}
Then there is some word $u\in W$ such that $u^n\in W$ for every natural number $n$.
\label{lem: words}
\end{lem}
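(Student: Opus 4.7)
The plan is to combine König's lemma with the Morse--Hedlund dichotomy for infinite words of low subword complexity.

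First, I would pass from $W$ to a single infinite word. Build a rooted tree $T$ whose nodes at level $n$ are the length-$n$ words in $W$, where the parent of $w$ is its length-$(|w|-1)$ prefix; the subword-closure property (2) makes every prefix of an element of $W$ again an element of $W$, so $T$ is well-defined. Since $W$ is infinite and $\Sigma$ is finite, $T$ is infinite and finitely branching, so König's lemma produces an infinite branch, i.e., a word $\omega \in \Sigma^{\mathbb{N}}$ every prefix of which lies in $W$, and hence (again by subword-closure) every finite subword of which lies in $W$.

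Next I would show $\omega$ is eventually periodic. Let $p_\omega(n)$ denote its subword complexity. Since every length-$n$ subword of $\omega$ lies in $W$, we have $p_\omega(n) \le f(n)$, and in particular $p_\omega(d) \le d$. The function $p_\omega$ is non-decreasing, integer-valued, and $p_\omega(0)=1$, so if it increased strictly at every step from $0$ up to $d$ it would give $p_\omega(d) \ge d+1$; hence $p_\omega(n_0) = p_\omega(n_0+1)$ for some $n_0 < d$. The ``forget the last letter'' map from the length-$(n_0+1)$ subwords of $\omega$ to its length-$n_0$ subwords is surjective (every subword of an infinite word has a right extension in the word) and, by equal cardinalities, is a bijection. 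Consequently each length-$n_0$ subword of $\omega$ has a unique right-extension inside $\omega$, so the sequence of length-$n_0$ subwords at positions $0,1,2,\ldots$ is the forward orbit of $\omega[0..n_0-1]$ under a function on a finite set, which is eventually periodic. This forces $\omega$ itself to be eventually periodic, say $\omega = \alpha\, u\, u\, u\, \cdots$ for some finite word $\alpha$ and some nonempty word $u$.

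Finally, every power $u^n$ appears as a finite subword of $\omega$ and therefore lies in $W$, giving the desired conclusion. The substantive ingredient is the middle step (essentially a packaging of Morse--Hedlund); the extraction of $\omega$ and the final extraction of $u$ are formal consequences of König's lemma and the subword-closure hypothesis, respectively.
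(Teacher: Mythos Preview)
Your proof is correct, and the underlying idea---a subword-complexity stabilisation $p(n_0)=p(n_0+1)$ forced by $p(d)\le d$, leading to unique right-extensions and hence periodicity---is exactly the mechanism the paper uses. The packaging differs, however. You first extract a single right-infinite word $\omega$ via K\"onig's lemma and then run the Morse--Hedlund argument on $\omega$, concluding that $\omega$ is eventually periodic and reading off $u$ from the periodic tail. The paper instead stays finitistic: it replaces $W$ by the subset $V$ of words that are prefixes of arbitrarily long words in $W$ (this is K\"onig's lemma done by hand), proves the analogous stabilisation $g(i)=g(i+1)$ for $g(n)=|V_n|$, and then, rather than passing to an infinite word, takes one finite word in $V$ of length $i(g(i)+1)$, chops it into length-$i$ blocks, and uses pigeonhole on those blocks to locate a repeat and hence the period $u$. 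Your route is more conceptual and quotes named results; the paper's is self-contained and avoids any appeal to infinite objects. Neither gains real generality over the other here.
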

 \begin{proof}
 Let $W_n$ be the set of words in $W$ which have length $n$.  For each $m \leq n$ there is a truncation map $\phi_{n,m}: W_n \to W_m$ which sends a word to its first $m$ letters.  Let $V_n \subseteq W_n$ be defined by $V_n := \bigcap_{p \geq n} \phi_{p,n}(W_p)$; in other words, $V := \bigcup V_n$ is the set of words in $W$ which have extensions of arbitrarily long length to other words in $W$.  Since $\phi_{p+1,n}(W_{p+1}) \subseteq \phi_{p,n}(W_p)$ for all $p \geq n$, we have $V_n = \phi_{p,n}(W_p)$ for all $p \gg 0$.  Thus $V_n$ is nonempty for all $n$, since $W$ is infinite.  Notice that $V$ is also closed under taking subwords; that is, if $w\in V$ and $v$ is a subword of $w$, then $v\in V$.   Moreover, the truncation map $\phi_{n,m}$ restricts to a surjective function $V_n \to V_m$ for any $m \leq n$, since $\phi_{n,m}(V_n) = \phi_{n,m} \phi_{p,n}(W_p) = \phi_{p,m}(W_p) = V_m$ for all $p \gg 0$.

 Let $g(n) := |V_n|$.  Since $g(n) \le f(n)$, there is some natural number $d$ such that $g(d)\le d$.  Notice that $g(i) \le g(i+1)$ for all $i$, by the surjectivity of the truncation map.  It follows that there is some $i$ such that $g(i)=g(i+1)$, for if this were not the case, then we would have $g(j+1)\ge g(j)+1$ for every natural number $j$, and so $g(d)\ge d-1+g(1)$.  Since $g(d)\le d$, we see that $g(1)=1$.  But $V$ is closed under taking subwords and so every element of $V$ would then be a power of some letter $x_i \in \Sigma$, and in this case $g(i)=g(i+1) =1$ for every natural number $i$, a contradiction.

Choose $i \in \mb{N}$ such that $m:=g(i)=g(i+1)$.  Let $V_i = \{ w_1,\ldots ,w_m \}$.  By definition, for each $j \in \{1,\ldots ,m\}$ there exists a letter $x_{a_j}\in \Sigma$ such that $w_j x_{a_j} \in V$, and moreover since $g(i+1)=g(i)$, we see that $x_{a_j}$ is uniquely determined.  For each $j\in \{1,2,\ldots ,m\}$, we write $$w_jx_{a_j}=x_{b_j} u_j$$ for some word $u_j$ of length $i$.  Since $V$ is closed under subwords, we see that $u_1,\ldots ,u_m\in V_i$.  But this means that there is a unique way of extending each $w_j$ to a word of length $i+2$ by adding two letters to the end.  In particular, we have $g(i+2)=g(i)$.  Continuing inductively, we see that $g(n)=g(i)$, and that every word in $V_i$ has a unique extension to a word in $V_n$, for every $n \geq i$.

Pick a word $v$ of length $i(m+1)$ in $V$.  Then as $V$ is closed under taking subwords, we necessarily have $$v=w_{j_1}\cdots w_{j_{m+1}}$$ for some $j_1,\ldots ,j_{m+1}\in \{1,\ldots ,m\}$.  Then there are $a$ and $b$ with $a<b$ such that $j_a=j_b$.  Let $u:=w_{j_a}w_{j_a+1}\cdots w_{j_{b-1}}$.  Since for $a \leq k \leq b-2$, $w_{j_k} w_{j_{k+1}}$ is the unique extension of $w_{j_k}$ to a word in $V_{2i}$, and $w_{j_{b-1}}w_{j_b} = w_{j_{b-1}}w_{j_a}$ is the unique extension of $w_{j_{b-1}}$ to a word in $V_{2i}$, it is clear that for each $n \geq 1$ the unique word in $V_{i(b-a)n}$ extending $u$ must be $u^n$.  In particular, $u^n\in W$ for every natural number $n$.
\end{proof}

We need the following technical result about hypothetical division algebras which are algebraic over a subfield, but are not locally PI, which will help us show that such algebras cannot have too small growth.
\begin{prop} Let $k$ be a field and let $D$ be a division $k$-algebra.  Suppose that $D$ is either left or right algebraic over a subfield $L$ that is finitely generated as an extension of $k$.  Let $V$ be a finite-dimensional $k$-vector subspace of $D$ which  contains $1$ and which generates a non-PI subalgebra of $D$.  Then there exists a finite-dimensional $k$-vector space $U \subseteq D$ such that $V \subseteq U$ and $LVL = UL = LU$.   Any such $U$ satisfies ${\rm dim}_L U^nL \ge {n+2\choose 2}$ as a right $L$-vector space, for all $n \geq 0$.
\label{prop: growth}
\end{prop}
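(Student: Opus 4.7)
The statement splits into two parts. For the existence of $U$, Proposition~\ref{prop: fix}(1) implies that the bimodule $M := LVL$ is finite-dimensional over $L$ on both sides. Pick finite sets $\{\alpha_1,\ldots,\alpha_r\} \subseteq M$ and $\{\beta_1,\ldots,\beta_s\} \subseteq M$ generating $M$ as a right, resp.\ left, $L$-module, and set $U := V + \sum_i k\alpha_i + \sum_j k\beta_j$. This is a finite-dimensional $k$-subspace of $M$ containing $V$. Since $U \subseteq M$, both $UL$ and $LU$ lie in $M$, while the choices of the $\alpha_i$ and $\beta_j$ give the reverse inclusions, so $UL = LU = M$.

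For the growth bound, set $h(n) := \dim_L(U^n L)$ (right dimension). Because $LU = UL$, one checks that $U^n L \cdot U^m L = U^{n+m} L$, so $B := \bigcup_n U^n L$ is a subring of $D$ filtered by the $U^n L$. Suppose for contradiction that $h(n_0) < \binom{n_0+2}{2}$ for some $n_0$. Since $\binom{n_0+2}{2} = \sum_{j=0}^{n_0}(j+1)$ while $h(n_0) = \sum_{j=0}^{n_0}(h(j)-h(j-1))$ (with $h(-1):=0$), there must be some $j$ with $h(j) - h(j-1) \leq j$. Fix a $k$-basis $\{u_1,\ldots,u_d\}$ of $U$ and the alphabet $\Sigma = \{x_1,\ldots,x_d\}$, setting $\bar w := u_{i_1}\cdots u_{i_n}$ for $w = x_{i_1}\cdots x_{i_n}$. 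Order $\Sigma^*$ by length then lex, and call $w$ \emph{standard} if $\bar w \notin \sum_{w' < w} \bar{w'}\, L$. The standard words of length $\leq n$ form a right $L$-basis of $U^n L$, so the number of standard words of length exactly $j$ is $h(j)-h(j-1) \leq j$.

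The crucial claim is that the set $W$ of standard words is closed under taking contiguous subwords. If $w = w_1 w_2 w_3$ is standard but $w_2$ were not, writing $\bar{w_2} = \sum_i \bar{v_i} l_i$ with $v_i < w_2$ in length-lex and then using $LU = UL$ repeatedly to rewrite each $l_i \bar{w_3}$ as $\sum_z \bar z \cdot m_{i,z}$, summed over words $z$ of length $|w_3|$ with $m_{i,z} \in L$, would express $\bar w$ as a right $L$-combination of terms $\bar{w_1 v_i z}$, each of which is strictly smaller than $w$ in length-lex, contradicting standardness of $w$. With subword-closure in hand, Lemma~\ref{lem: words} applies to $W$ via the bound $f(j) \leq j$.

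If $W$ is finite then $B$ itself is module-finite over $L$ on the right, so $B$ embeds in $M_r(L)$ for some $r$ and is PI; the subalgebra of $D$ generated by $V \subseteq B$ would then be PI, contradicting the hypothesis. If $W$ is infinite, Lemma~\ref{lem: words} produces a word $u$ with $u^n \in W$ for all $n \geq 1$. Since $u^i$ has length $i|u| < n|u|$ for $i<n$, standardness of $u^n$ gives $\bar u^n \notin \sum_{i<n} \bar u^i L$, so $\sum_{i \geq 0} \bar u^i L$ is direct. But $D$ is algebraic over $L$ on at least one side by hypothesis, hence on both sides by Proposition~\ref{prop: fix}(2); in particular $\bar u$ is right algebraic over $L$, so $\sum_i \bar u^i L$ cannot be direct---a contradiction. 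The main obstacle in a detailed write-up is rigorously verifying the subword-closure of $W$, which requires careful bookkeeping when $LU = UL$ is used to commute left $L$-coefficients past $U$-basis products while maintaining the length-lex ordering on words.
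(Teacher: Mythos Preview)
Your proposal is correct and follows essentially the same approach as the paper's proof: both establish the existence of $U$ via Proposition~\ref{prop: fix}(1), set up the degree-lex standard word machinery over a $k$-basis of $U$, verify subword-closure of the standard set using $LU = UL$, and then invoke Lemma~\ref{lem: words} together with two-sided algebraicity (Proposition~\ref{prop: fix}(2)) to derive the contradiction. Your embedding $B \hookrightarrow \mathrm{End}_L(B_L) \cong M_r(L)$ is exactly the content of the paper's citation of Procesi--Small, so even the PI step is the same.
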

\begin{proof} Lemma~\ref{lem: fix} shows that $D$ is algebraic over $L$ on both sides, and that $LVL$ is a finite-dimensional $L$-vector space on both sides.  Any finite-dimensional $k$-space $U$ containing $V$ and containing both a right and left $L$-spanning set for $LVL$ will satisfy $LVL = UL = LU$; in particular, such a $U$ exists.  Pick a basis $\mathcal{B} := \{b_1,\ldots , b_e=1\}$ for $U$ as a $k$-vector space and let $\Sigma :=\{x_1,\ldots ,x_e\}$ be a set of indeterminates.  We put a degree lexicographic ordering on the words in the free monoid $\Sigma^*$ by declaring that $x_1>x_2>\cdots > x_e.$  We note that we have a map of monoids $\phi : \Sigma^* \to D$ given by $\phi(x_i)=b_i$.

We define a subset $W$ of $\Sigma^*$ as follows: $$ W := \{ w \in \Sigma^* \,| \, \phi(w) \not \in \sum_{u<w} \phi(u) L \}.$$  Clearly $\phi(W)$ is a right $L$-basis for $\sum_{n \geq 0} U^n L$.  We note that $W$ is infinite; if not, we have that $\phi(W)\subseteq U^d$ for some natural number $d$, and so $\sum_{n \geq 0} U^n L =  U^d L$.  But since $V$ generates a non-PI algebra $A$ as a $k$-algebra, and $V \subseteq U$, $U$ also generates a non-PI algebra.  On the other hand, $\sum_{n \geq 0} U^n L$ is a ring (since $UL = LU)$ which is a finite-dimensional right $L$-vector space and so must satisfy a PI \cite[Theorem 2]{PS}, a contradiction.

We next observe that $W$ is closed under taking subwords.  To see this, let $w\in W$ and suppose that $w=w_1vw_2$ for some words $w_1,w_2,v\in \Sigma^*$.  Suppose that $v\not \in W$.  Then we have $$\phi(v) \in \sum_{u<v} \phi(u)L.$$ Thus
$$\phi(w)=\phi(w_1)\phi(v)\phi(w_2) \in \sum_{u<v} \phi(w_1)\phi(u)L\phi(w_2).$$
Let $m$ denote the length of $w_2$.  Then
$$\sum_{u<v} \phi(w_1)\phi(u)L\phi(w_2) \subseteq \sum_{u<v} \phi(w_1)\phi(u)LU^m
 \ =  \  \sum_{u<v} \phi(w_1)\phi(u)U^m L.$$
 Furthermore, if $u<v$ then $w_1uy<w_1vw_2$ for every word $y$ of length at most $m$, and so $\sum_{u<v} \phi(w_1)\phi(u)U^m L \subseteq \sum_{u<w} \phi(u)L$. But this means that
$$\phi(w)\in \sum_{u<w} \phi(u)L,$$ a contradiction to $w \in W$.  Thus $v \in W$ and $W$ is closed under taking subwords.

Let $W_n$ denote the subset of $W$ consisting of those words of length $n$, and let $f(n) := |W_n|$.  By construction, ${\rm dim}_L(U^nL) = \sum_{i = 0}^n f(i)$.  If $f(d)> d$ for each $d$, we get
$${\rm dim}_L(U^nL) \ge {n+2\choose 2}$$ as required.
Thus we may assume that $f(d)\le d$ for some $d$.  By Lemma~\ref{lem: words}, there exists some $u\in W$ such that $u^n\in W$ for every $n$.  It follows that $\sum_{i \geq 0} \phi(u)^i L$ is direct, which contradicts the fact that $D$ is right algebraic over $L$.  The result follows.
\end{proof}

\begin{cor} Let $k$ be an uncountable field and let $A$ be a finitely generated $k$-algebra that is a non-PI domain of GK-dimension strictly less than $3$.  If $A$ is not algebraic over $k$, then for any $x \in A$ which is transcendental over $k$ we have that $Q(A)$ is not algebraic over $k(x)$ on either side.
\label{prop: set}
\end{cor}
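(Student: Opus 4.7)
My plan is to suppose for contradiction that $D := Q(A)$ is algebraic over $L := k(x)$ on one side, and derive a contradiction with $\gkdim(A) < 3$ by forcing $\dim_k V^N$ to grow cubically. First I will invoke Proposition~\ref{prop: fix}(2), applicable since $L/k$ is finitely generated, to upgrade this hypothesis to algebraicity on both sides. I then pick a finite-dimensional generating $k$-subspace $V \subseteq A$ containing $1$ and $x$; since $A$ is non-PI, $V$ generates a non-PI subalgebra, so Proposition~\ref{prop: growth} produces a finite-dimensional $U \subseteq D$ with $V \subseteq U$, $LVL = UL = LU$, and $\dim_L(U^n L) \geq \binom{n+2}{2}$ for all $n \geq 0$.

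To transfer growth of $U^n L$ inside $D$ into growth of $V^N$ inside $A$, I will clear denominators. Since $U \subseteq LVL$, I can choose $P, Q \in k[x] \setminus \{0\}$ and $M \in \mb{N}$ with $PUQ \subseteq k[x]_{\leq M}\, V\, k[x]_{\leq M}$. Because $x \in V$ forces $k[x]_{\leq M} \subseteq V^M$, this gives $PUQ \subseteq V^{2M+1}$ and hence $(PUQ)^n \subseteq V^{n(2M+1)} \subseteq A$. Setting $H := QP \in k[x]$, one has $(PUQ)^n = P (UH)^{n-1} U\, Q$.

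The crucial identity $(UH)^{n-1} U \cdot L = U^n L$ drives the argument. The inclusion $\subseteq$ is immediate from $UH \subseteq UL = LU$ by iteration. For $\supseteq$, given $u_1 u_2 \cdots u_n \in U^n$, I will insert $HH^{-1}$ between each consecutive pair to rewrite the product as $u_1 H (H^{-1}u_2) H (H^{-1}u_3) \cdots H (H^{-1}u_n)$; since each $H^{-1} u_j \in LU = UL$ expands as $\sum_\alpha w_{j,\alpha}\, l_{j,\alpha}$, the $L$-scalars $l_{j,\alpha}$ can then be commuted rightward one at a time via $LU = UL$, yielding an expression in $(UH)^{n-1} U \cdot L$. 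Since $Q \in L^{\times}$ and left multiplication by $P$ is an isomorphism of right $L$-modules, this gives $\dim_L\bigl((PUQ)^n \cdot L\bigr) = \dim_L(U^n L) \geq \binom{n+2}{2}$, so I can select right-$L$-independent elements $z_1, \ldots, z_{r_n} \in (PUQ)^n \subseteq A$ with $r_n \geq \binom{n+2}{2}$.

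Finally, multiplying by powers of $x$ amplifies this to cubic growth. The elements $\{z_i x^b : 1 \leq i \leq r_n,\ 0 \leq b \leq n\}$ are $k$-linearly independent: a dependency $\sum_{i,b} c_{ib} z_i x^b = \sum_i z_i \bigl(\sum_b c_{ib} x^b\bigr) = 0$ forces each $\sum_b c_{ib} x^b = 0$ in $L$ by right-$L$-independence of the $z_i$, so all $c_{ib} = 0$. They all lie in $V^{n(2M+1)+n} = V^{n(2M+2)}$, giving $\dim_k V^{n(2M+2)} \geq \binom{n+2}{2}(n+1) = \Theta(n^3)$, which forces $\gkdim(A) \geq 3$ and contradicts the hypothesis. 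I expect the main technical hurdle to be the identity $(UH)^{n-1} U \cdot L = U^n L$: since $P$ and $Q$ do not commute with elements of $U$ in general, the uniform denominator approach only delivers the $H$-separated form, and it is the symmetric commutation $LU = UL$ supplied by Proposition~\ref{prop: growth} that allows the unbroken product $U^n$ to be recovered modulo scalars in $L$.
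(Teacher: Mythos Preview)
Your argument is correct and follows the same overall strategy as the paper: invoke Proposition~\ref{prop: growth} to get $\dim_L U^nL\geq\binom{n+2}{2}$, then multiply by powers of $x$ to force cubic $k$-growth inside $A$, contradicting $\gkdim A<3$. The one place you work harder than necessary is in transferring the growth into $A$. You keep $U\subseteq D$ arbitrary and clear denominators via $P,Q\in k[x]$, which forces you to prove the identity $(UH)^{n-1}U\cdot L=U^nL$; this is fine (and your sketch of it via $UL=LU$ is valid), but the paper sidesteps it entirely by observing that $U$ itself can be chosen inside $A$. Indeed, Proposition~\ref{prop: growth} allows any $U$ with $V\subseteq U$ and $UL=LU=LVL$, and since a left $L$-basis of $LVL$ can be taken in $VL$ and then scaled on the right into $Vk[x]\subseteq A$ (and symmetrically for a right basis), one gets $U\subseteq A$ directly. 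Then $U^n\subseteq A$ already, a right $L$-basis $\{v_1,\dots,v_m\}\subseteq U^n$ gives $\dim_k(U^{2n})\geq\dim_k\sum_j v_j\,k[x]_{\leq n}\geq\binom{n+2}{2}\,n$, and you are done without $P$, $Q$, $H$, or the commutation identity. Your initial appeal to Proposition~\ref{prop: fix}(2) is also harmless but unnecessary, since Proposition~\ref{prop: growth} only requires one-sided algebraicity.
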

\begin{proof}  Pick $x\in A\setminus k$ which is transcendental over $k$, and let $V \subseteq A$ be a finite-dimensional $k$-vector space that contains $1$ and $x$ and generates $A$ as a $k$-algebra.  Let $L :=k(x)$.  Assume that $Q(A)$ is either left or  right algebraic over $L$.  Then by Proposition \ref{prop: growth}, we have that
$${\rm dim}_L U^nL \ge {n+2\choose 2}$$
for all $n \geq 0$, where $U \supseteq V$ is any finite-dimensional $k$-space such that $UL = LU = LVL$.  We claim that we can take $U \subseteq A$.  Note that we can take for $U$ the $k$-span of a basis for $V$, a left $L$-basis for $LVL$, and a right $L$-basis for $LVL$.   The left $L$-basis for $LVL$ can be taken in $VL$; moreover, right multiplication of the elements of this basis by a nonzero scalar in $L$ keeps it a left $L$-basis, so by clearing denominators it can be taken in $Vk[x] \subseteq A$.  Similarly, the right $L$-basis for $LVL$ can be taken in $k[x]V$, proving the claim.

Now if $\{v_1, \dots, v_m \} \subseteq U^n$ is a right $L$-basis of $U^nL$, where $m  \geq {n+2\choose 2}$,  then we have $${\rm dim}_k(U^{2n}) \ge {\rm dim}_k \left(\sum_{i\le n} U^n x^i \right)\ge {\rm dim}_k \left(\sum_{j=1}^m v_j \sum_{i\le n}  k x^i \right) \ge {n+2\choose 2}n.$$  But this contradicts the fact that $A$ has GK-dimension strictly less than $3$.  Consequently, $Q(A)$ cannot be either left or right algebraic over $k(x)$.
\end{proof}

We are now ready to prove the main result of this section.  First we give a result that does not require the base field to be algebraically closed.
\begin{thm} Let $k$ be an uncountable field and let $A$ be a finitely generated $k$-algebra that is a domain of GK-dimension strictly less than $3$, such that $A$ is not algebraic over $k$; fix $x \in A$ which is transcendental over $k$.  If $\operatorname{char} k = p > 0$, assume in addition that $D: = Q(A)$ does not contain an element $u$ such that $xux^{-1} = u + 1$.  If $A$ does not satisfy a polynomial identity, then $Q(A)$ contains a free $k$-algebra on two generators.
\end{thm}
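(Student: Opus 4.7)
The plan is to apply Corollary~\ref{cor: xxx2} with $a := x$.  The characteristic-$p$ side hypothesis of that corollary is exactly the condition placed on $D := Q(A)$ in the theorem.  What remains is to verify the main hypothesis of Corollary~\ref{cor: xxx2}, namely that $D$ is neither left nor right algebraic over $E := C(x; D)$.

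As an initial ingredient, Corollary~\ref{prop: set} (whose proof already exploits the GK-dimension bound via Proposition~\ref{prop: growth}) yields that $D$ is neither left nor right algebraic over $k(x)$.  Since $k(x) \subseteq E$, this is \emph{a priori} weaker than what is required for Corollary~\ref{cor: xxx2}, so an additional step is needed.  The bridge I would use is to establish that $E$ is algebraic over $k(x)$; note that for $e \in E$, left and right algebraicity over $k(x)$ coincide since $e$ commutes with every element of $k(x)$.  Granted this, suppose for contradiction that $D$ is left algebraic over $E$.  Lemma~\ref{lem: chain}, applied to the chain of division rings $k(x) \subseteq E \subseteq D$, then forces $D$ to be left algebraic over $k(x)$, contradicting Corollary~\ref{prop: set}; the right-hand case is symmetric via the right-handed version of Lemma~\ref{lem: chain}.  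Thus $D$ is neither left nor right algebraic over $E$, and Corollary~\ref{cor: xxx2} delivers the desired free $k$-subalgebra of $D$ on two generators.

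The main obstacle is therefore to show that every $e \in E = C(x; D)$ is algebraic over $k(x)$.  Suppose toward a contradiction that some $e \in E$ is transcendental over $k(x)$, and write $e = p q^{-1}$ with $p, q \in A$; the commutation $xe = ex$ is equivalent to the identity $qxp = pxq$ in $A$.  In this case $k(x,e)$ is a commutative subfield of $D$ of transcendence degree $2$ over $k$.  My plan would be to adapt the Bergman-style growth argument of Proposition~\ref{prop: growth} to this situation: choose a finite-dimensional $k$-subspace $V$ of $A$ containing $1, x, p, q$ together with a generating set of $A$ (and a non-algebraic element of $A$ over $k(x)$ supplied by Corollary~\ref{prop: set}), and exploit the algebraic independence of $x$ and $e$ in $D$ to produce far too many $k$-linearly independent elements in $V^n$, forcing $\dim_k V^n$ to grow strictly faster than any cubic polynomial in $n$ and contradicting $\operatorname{GKdim}(A) < 3$.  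The technical difficulty here, and the reason this step is the heart of the proof, is that $e$ itself need not lie in $A$; the growth estimate must be carried out using only the representatives $p, q \in A$, with the $A$-relation $qxp = pxq$ substituting for the $D$-relation $xe = ex$ throughout the bookkeeping.
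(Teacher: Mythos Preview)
Your overall strategy matches the paper's: use Corollary~\ref{prop: set} to get $D$ non-algebraic over $k(x)$ on both sides, show the centralizer $E = C(x;D)$ is algebraic over $k(x)$, invoke transitivity via Lemma~\ref{lem: chain}, and finish with Corollary~\ref{cor: xxx2}.

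The genuine gap is in your use of Lemma~\ref{lem: chain}. That lemma does \emph{not} assert transitivity of left algebraicity for a chain $D_2 \subseteq D_1 \subseteq D_0$; its hypotheses concern the indeterminate extensions --- one needs $D_0(t)$ left algebraic over $D_1(t)$ and $D_1(t)$ left algebraic over $D_2(t)$. From ``$D$ left algebraic over $E$'' and ``$E$ algebraic over $k(x)$'' you cannot deduce either of these. The second in particular would require that finitely many elements of $E$, each algebraic over the central subfield $k(x)$, generate a finite-dimensional $k(x)$-algebra --- a Kurosh-type statement that is not available. Straight transitivity of one-sided algebraicity through a noncentral intermediate division ring is exactly the difficulty Lemma~\ref{lem: chain} is designed to sidestep, and it does so only at the cost of the $(t)$-hypotheses.

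The paper's fix is to run the whole argument in $D(t)$ and apply Corollary~\ref{cor: xxx2} there rather than in $D$. One first shows $E(t)$ is algebraic over $k(x,t)$, by passing to $A' := A \otimes_k k(t)$ (still of GK-dimension $<3$ over the uncountable field $k(t)$) and invoking \cite[Theorem~1.2]{Bell}: a $y \in E(t)$ transcendental over $k(x,t)$ would force $D(t) = Q(A')$ to be finite over $k(x,y,t)$, hence PI, contradicting the hypothesis on $A$. Now suppose $D(t)$ were left algebraic over $E(t) = C(x; D(t))$; Lemma~\ref{lem: chain} applies with its actual hypotheses and gives $D$ left algebraic over $k(x)$, contradicting Corollary~\ref{prop: set}. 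The right side is symmetric, so $D(t)$ is neither left nor right algebraic over $E(t)$; Corollary~\ref{cor: xxx2} (with Lemma~\ref{lem: weyl} supplying the characteristic-$p$ side condition for $D(t)$) produces a free subalgebra in $D(t)$, and Proposition~\ref{lem: Ro} brings it down to $D$.

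Two smaller remarks on your sketch of the algebraicity-of-$E$ step. First, the paper does not carry out a growth argument here but simply cites \cite[Theorem~1.2]{Bell}; your Bergman-style idea is in the right spirit, but to mesh with the corrected use of Lemma~\ref{lem: chain} it would have to be executed for $E(t)$ inside $A'$, not for $E$ inside $A$. Second, the claimed equivalence of $xe=ex$ with $qxp=pxq$ for $e=pq^{-1}$ is incorrect: from $xpq^{-1}=pq^{-1}x$ one gets only $xp = pq^{-1}xq$.
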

\begin{proof} We assume that $A$ does not satisfy a polynomial identity.  Let $L := k(x)$.  By Corollary~\ref{prop: set},  $D = Q(A)$ is not algebraic over $k(x)$ on either side.  Let $E := C(x; D)$ be the centralizer of $x$. We add an indeterminate $t$ and consider
\[
k(t) \subseteq k(x, t) \subseteq E(t) = C(x; D(t)) \subseteq D(t),
\]
where we have used Lemma~\ref{dumb-lem}(2).  Now $A' := A \otimes_k k(t)$ is a finitely generated $k(t)$-algebra which is a domain with ${\rm GKdim}_{k(t)} A' = {\rm GKdim}_k A < 3$, and $D(t) = Q(A')$.  Suppose that $E(t)$ is not algebraic over $k(x, t)$, and pick $y \in E(t)$ which is transcendental over $k(x, t)$.  We see by \cite[Theorem 1.2]{Bell} that since the chain $k(t) \subseteq k(x, t) \subseteq k(x, y, t)$ already has two infinite-dimensional extensions, we must have that $Q(A')$ is finite-dimensional over the field $k(x, y, t)$ and thus is PI.  Then $A$ is PI, a contradiction.   Thus $E(t)$ is algebraic over $k(x, t)$.

If $D(t)$ is algebraic over $E(t)$ on the left, then we get that $D$ is left algebraic over $k(x)$ by Lemma~\ref{lem: chain}, and this is a contradiction.  Similarly, $D(t)$ is not algebraic over $E(t)$ on the right, using the right-handed version of Lemma~\ref{lem: chain}.  If $k$ has characteristic $p$, then $D(t)$ also does not have an element $u$ such that $xux^{-1} = u +1$, using Lemma~\ref{lem: weyl}.  Now Corollary~\ref{cor: xxx2} shows that $D(t)$ contains a free $k$-subalgebra on two generators.   So $D$ contains a free algebra by Proposition~\ref{lem: Ro}.
\end{proof}

We close with a version of the theorem over an algebraically closed field, in which case we can give a very clean statement.
\begin{cor}
\label{cor-mainthm3} Let $k$ be an algebraically closed uncountable field and let $A$ be a finitely generated $k$-algebra that is a domain of GK-dimension strictly less than $3$.  If $A$ does not satisfy a polynomial identity, then $Q(A)$ contains a free $k$-algebra on two generators.
\end{cor}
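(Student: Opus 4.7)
The plan is to verify the extra hypotheses of the preceding theorem under the stronger assumption that $k$ is algebraically closed. The key observation is that algebraic closedness of $k$ forces any finitely generated algebraic $k$-domain to equal $k$. Indeed, if every $a\in A$ were algebraic over $k$, then $k[a]$ would be a finite-dimensional commutative domain over $k$, hence a field extension of $k$ of finite degree; since $k$ is algebraically closed this forces $k[a]=k$, so $a\in k$, and therefore $A=k$, which is PI. Since we have assumed $A$ is not PI, $A$ must fail to be algebraic over $k$, and so we can fix some $x\in A$ transcendental over $k$.

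In characteristic zero the preceding theorem applies to this $x$ without any further work and produces a free $k$-subalgebra of $Q(A)$ on two generators, completing the proof.

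In positive characteristic $p$, one must in addition check that $D:=Q(A)$ contains no element $u$ satisfying $xux^{-1}=u+1$. The plan is to replace $x$, if necessary, by some power $x^{p^n}$, which is still transcendental over $k$. Writing $\sigma$ for conjugation by $x$, the characteristic-$p$ Frobenius identity $(\sigma-1)^{p^n}=\sigma^{p^n}-1$ (valid since $\binom{p^n}{i}\equiv 0\pmod p$ for $0<i<p^n$) shows that the forbidden condition for $x^{p^n}$ amounts to solvability of $(\sigma-1)^{p^n}(v)=1$ in $D$; any witness $u$ for $x$ itself satisfies only $(\sigma-1)(u)=1$ and so $(\sigma-1)^{p^n}(u)=0\neq 1$, and hence is no longer a witness at level $p^n$. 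The hard part will be to show that for some $n$ \emph{no} element of $D$ witnesses the obstruction for $x^{p^n}$; my plan is to combine the growth estimates of Proposition~\ref{prop: growth} and Corollary~\ref{prop: set} with an analysis of the ascending filtration $\ker(\sigma-1)\subseteq\ker(\sigma-1)^2\subseteq\cdots$ of $D$, exploiting the bound $\gkdim A<3$ to preclude an infinite tower of such witnesses. Once such an $n$ is found, applying the preceding theorem to $x^{p^n}$ produces the desired free subalgebra.
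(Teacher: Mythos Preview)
Your characteristic-zero argument is fine and matches the paper's: algebraic closedness of $k$ forces any algebraic domain to be $k$ itself, so the non-PI hypothesis supplies a transcendental $x$ and the preceding theorem applies directly.

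In positive characteristic, however, there is a genuine gap. Your plan is to pass from $x$ to $x^{p^n}$ and hope that for some $n$ the equation $(\sigma-1)^{p^n}(v)=1$ has no solution in $D$. You correctly observe that a witness $u$ for $x$ is no longer a witness for $x^{p^n}$, but this says nothing about \emph{other} potential witnesses, and the sketch you offer (``analyze the filtration $\ker(\sigma-1)\subseteq\ker(\sigma-1)^2\subseteq\cdots$ and use the bound $\gkdim A<3$'') is not an argument. The filtration lives in the division ring $D$, not in the finitely generated algebra $A$, so growth bounds on $A$ do not obviously constrain it; and even if one shows that the elements $v_n$ solving $(\sigma-1)^{p^n}(v_n)=1$ are, say, left $E$-independent, this does not contradict anything, since $D$ may well be infinite-dimensional over $E=C(x;D)$. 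There is also a secondary hazard you do not address: if some $x^{p^n}$ happened to be central in $D$, the obstruction vanishes trivially but the centralizer becomes all of $D$, and the mechanism of the preceding theorem collapses. (One can rule this out a posteriori using Corollary~\ref{prop: set}, but you would need to say so.)

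The paper takes a completely different and much more direct route in characteristic $p$: it keeps the original $x$ and argues that the obstruction simply cannot occur. If $xux^{-1}=u+1$, then setting $z:=ux^{-1}$ gives $xz=zx+1$, so $R:=k\langle x,z\rangle\subseteq D$ is a domain image of the Weyl algebra. A proper image would have GK-dimension~$1$, hence be commutative over the algebraically closed field $k$ (Tsen), which is impossible since $xz\ne zx$; so $R$ is the Weyl algebra itself. Then a result of the first author forces $Q(A)$ to be finite over $Q(R)$, and since the Weyl algebra is PI in characteristic $p$, this makes $Q(A)$ PI, contradicting the hypothesis. This argument does not attempt to dodge the obstruction by modifying $x$; it shows the obstruction is incompatible with $A$ being non-PI in the first place.
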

\begin{proof}  If $k$ has characteristic $0$, then this follows immediately from the preceding theorem.  If $k$ has characteristic $p > 0$, then choose any $x \in A \setminus k$, which will necessarily be transcendental over $k$; then the theorem will apply if we show that there is no $u \in D := Q(A)$ such that $xux^{-1} = u + 1$.  Suppose instead that there is such a $u$.  Then $xz = zx + 1$, where $z = ux^{-1}$.  Thus $R := k \langle x, z \rangle \subseteq D$ is a domain which is a homomorphic image of the Weyl algebra.  If $R$ is not isomorphic to the Weyl algebra, then $R$ has GK-dimension $1$, since $R$ contains $k[x]$.  But a domain of GK-dimension 1 which is an affine algebra over an algebraically closed field must be commutative, as an application of Tsen's theorem \cite[pp. 278--279]{Coh}.  Since a commutative ring cannot have elements satisfying the relation $xz = zx + 1$, this is a contradiction.  Thus $R$ is isomorphic to the Weyl algebra.  Now by \cite[Theorem 1.3]{Bell}, $Q(A)$ is finite-dimensional as a left $Q(R)$-vector space.  But since $Q(R)$ is PI, $Q(A)$ is PI.  This is a contradiction since $A$ is not PI.
\end{proof}

\section*{Acknowledgments}
We thank Jia-feng L\"u for pointing out a problem with our original proof of Lemma 3.1.


\begin{thebibliography}{99}

\bibitem{AS} M. Artin and J. T. Stafford, Noncommutative graded domains with quadratic growth. \emph{Invent. Math.} {\bf 122} (1995), no. 2, 231--276.

\bibitem{Bell} J. P. Bell, Division algebras of Gelfand-Kirillov transcendence degree 2. \emph{Israel J. Math.} {\bf 171} (2009), 51--60.



\bibitem{Bell3} J. P. Bell, Noetherian algebras over algebraically closed fields.  \emph{J. Algebra}  {\bf 310} (2007),  no. 1, 148--155.



\bibitem{BR} J. P. Bell and D. Rogalski, Free subalgebras of quotient rings of Ore extensions. \emph{Algebra Number Theory} 6 (2012), no. 7, 1349--1367.



\bibitem{Ch} K. Chiba, Free subgroups and free subsemigroups of division rings. \emph{J. Algebra} {\bf 184} (2) (1996) 570--574.

\bibitem{Coh} P. M. Cohn, \emph{Algebra}. Volume 3. John Wiley \& Sons, Ltd., Chichester, 1991.

\bibitem{FGS} L. M. Figuerido, J. Z. Gon\c calves, M. Shirvani, Free group algebras in certain division rings. \emph{J. Algebra} {\bf 185} (2) (1996) 298--311.



\bibitem{KL} G. R. Krause, and T. H. Lenagan, \emph{Growth of algebras and Gelfand-Kirillov dimension.} Revised edition. Graduate Studies in Mathematics, 22. American Mathematical Society, Providence, RI, 2000.

\bibitem{Licht} A. I. Lichtman, Free subalgebras in division rings generated by universal enveloping algebras. \emph{Algebra Colloq.} {\bf 6} (2) (1999) 145--153.

\bibitem{Lor} M. Lorenz, On free subalgebras of certain division algebras. \emph{Proc. Amer. Math. Soc.} {\bf 98} (3) (1986) 401--405.

\bibitem{ML} L. Makar-Limanov, The skew field of fractions of the Weyl algebra contains a free noncommutative subalgebra. \emph{Comm. Algebra} {\bf 11} (1983), no. 17, 2003--2006.

\bibitem{ML15} L. Makar-Limanov, On free subsemigroups of skew fields. \emph{Proc. Amer. Math. Soc.} {\bf 91} (2) (1984) 189--191.

\bibitem{ML2} L. Makar-Limanov, On group rings of nilpotent groups.  \emph{Israel J. Math.} {\bf 48} (1984),  no. 2-3, 244--248.

\bibitem{ML3} L. Makar-Limanov and P. Malcolmson, Free subalgebras of enveloping fields. \emph{Proc. Amer. Math. Soc.} {\bf 111} (1991), no. 2, 315--322.

\bibitem{NP} A. Nyman and C. J. Pappacena, Two-sided vector spaces.  \emph{Linear Algebra Appl.} {\bf 420} (2007),  no. 2-3, 339--360.

\bibitem{PS} C. Procesi and L. W. Small, Endomorphism rings of modules over {${\rm PI}$}-algebras.  \emph{Math. Z.} {\bf 106} (1968), 178--180.

\bibitem{R} Z. Reichstein, On a question of Makar-Limanov. \emph{Proc. Amer. Math. Soc.} {\bf 124} (1) (1996) 17--19.

\bibitem{RV} Z. Reichstein and N. Vonessen, Free subgroups in division algebras. \emph{Comm. Algebra} {\bf 23} (6) (1995) 2181--2185.

\bibitem{SG} M. Shirvani and J Z. Gon\c calves, Large free algebras in the ring of fractions of skew polynomial rings. \emph{J. London Math. Soc.} (2) {\bf 60} (1999), no. 2, 481--489.


\end{thebibliography}
\end{document}